\tikzstyle{vertex}=[circle, fill, inner sep=2pt, outer sep=0pt]
\newcommand{\middlewave}[1]{\raisebox{0.5em}{\uwave{\hspace{#1}}}}
\tikzstyle{gray}=[shape=circle,draw=black,fill=gray!40]
\tikzstyle{black}=[shape=circle,draw=black,fill=black]
\tikzstyle{white}=[shape=circle,draw=black,fill=white]
\tikzstyle{squiggle}=[decorate,decoration=snake]
\newcommand{\C}{\mathcal{C}}
\newcommand{\ZZ}{\mathbb{Z}}
\newcommand{\RR}{\mathbb{R}}
\newcommand{\Tc}{\mathcal{T}}
\newcommand{\vw}{\textbf{w}}
\newcommand{\vp}{\textbf{p}}
\newcommand{\e}{\mathbf{e}}
 \newcommand{\Ta}{\mathcal{T}_{\mathbf{a}}}
 \newcommand{\Tm}{\mathcal{T}_{m}}
\DeclareMathOperator{\Vol}{Vol}
\DeclareMathOperator{\vol}{vol}
\DeclareMathOperator{\Ehr}{Ehr}
\DeclareMathOperator{\codeg}{codeg}
\DeclareMathOperator{\conv}{conv}
\title{Ehrhart Theory of Cosmological Polytopes}
\author{Justus Bruckamp, Lina Goltermann, Martina Juhnke, Erik Landin and Liam Solus}
\date{\today}
\subjclass[2020]{05A15, 52B12, 52B20 (primary) 13P10 (secondary)}
\keywords{%
  cosmological polytope,
  normalized volume,
  ehrhart polynomial,
  $h^*$-polynomial,
  triangulation,
  Gr\"obner basis,
  wavefunction}
\begin{document}

\begin{abstract}
The cosmological polytope of a graph $G$ was recently introduced to give a geometric approach to the computation of wavefunctions for cosmological models with associated Feynman diagram $G$. 
Basic results in the theory of positive geometries dictate that this wavefunction may be computed as a sum of rational functions associated to the facets in a triangulation of the cosmological polytope. 
The normalized volume of the polytope then provides a complexity estimate for these computations. 
In this paper, we examine the (Ehrhart) $h^\ast$-polynomial of cosmological polytopes. 
We derive recursive formulas for computing the $h^\ast$-polynomial of disjoint unions and $1$-sums of graphs. 
The degree of the $h^\ast$-polynomial for any $G$ is computed and a characterization of palindromicity is given. 
Using these observations, a tight lower bound on the $h^\ast$-polynomial for any $G$ is identified and explicit formulas for the $h^\ast$-polynomials of multitrees and multicycles are derived. 
The results generalize the existing results on normalized volumes of cosmological polytopes. 
A tight upper bound and a combinatorial formula for the $h^\ast$-polynomial of any cosmological polytope are conjectured. 
\end{abstract}

\maketitle

\section{Introduction}
\label{sec:introduction}

Let $G = (V, E)$ be an undirected graph with set of nodes $V$ and multiset of edges $E$. 
In particular, $G$ is allowed to have multiple edges as well as loops.
Let $\mathbb{R}^{V \cup E}$ denote the real-Euclidean space with standard unit vectors $\e_\alpha$ for $\alpha \in V \cup E$. 
The \emph{cosmological polytope} of $G$ is 
\[
\mathcal{C}_G = \conv(\e_i + \e_j - \e_{f}, \e_i - \e_j + \e_{f}, -\e_i + \e_j + \e_{f}~ :~ f =ij \in E) \subset \mathbb{R}^{V\cup E}. 
\]
As the cosmological polytope does not depend on isolated vertices of $G$, we only consider graphs $G$ without isolated vertices throughout the whole article. Then, the cosmological polytope $\mathcal{C}_G$ is a $(|V| + |E| - 1)$-dimensional lattice polytope
introduced by \cite{arkani2017cosmological} in the study of wavefunctions of scalar models of the universe. 
When $G$ is interpreted as a Feynman diagram representing a cosmological model, the associated wavefunction may be computed in terms of the so-called \emph{canonical form} associated to the polytope $\mathcal{C}_G$ \citep{arkani2017cosmological}. 
The canonical form $\Omega_X$ of a positive geometry $X$ is a (unique) top-dimensional differential form with logarithmic singularities only along the boundary components of $X$ \citep{lam2022invitation}. 
When the positive geometry is a $d$-dimensional convex polytope $P$, such as for $\mathcal{C}_G$, its canonical form may be expressed as 
\[
\Omega_P = \omega \frac{g}{f_1\cdots f_r}, 
\]
where $f_1,\ldots, f_r$ are the facet-defining linear forms of $P$, $\omega$ is a regular differential $d$-form on $P$ and $g$ is a polynomial, vanishing on the intersections of facets, that do not lie in $P$. The polynomial $g$ is also called the \emph{adjoint} polynomial of the dual polytope of $P$ (see e.g., \citep{Warren-adjoint}). 

A standard way to compute the canonical form $\Omega_X$ of a positive geometry is to find a subdivision of $X$ into simpler positive geometries $Y_1,\ldots, Y_k$, and apply the observation that 
\begin{equation}
\label{eqn: canform decomp}
\Omega_X = \Omega_{Y_1} + \cdots +\Omega_{Y_k}. 
\end{equation}
In physics, these subdivisions often admit physical interpretations as (possibly new) physical theories for the computation of wavefunctions. 

In the case of lattice polytopes, such as $\mathcal{C}_G$, the simplest subdivisions correspond to (unimodular) triangulations, for which the facet-defining linear forms may be recovered easily from the vertex representation of each full-dimensional simplex in the triangulation. 
Recently, \cite{jsl-cosmo} found regular, unimodular triangulations of $\mathcal{C}_G$ for any graph $G$ in terms of square-free Gr\"obner bases of the toric ideal associated to $\mathcal{C}_G$. 
They gave explicit combinatorial descriptions of the vertex representations of the full-dimensional simplices in such triangulations when $G$ is a tree or cycle. 
Such triangulations thereby yield new physical theories for the computation of the wavefunction associated to $G$ via~\eqref{eqn: canform decomp}.

The number of full-dimensional simplices in such a triangulation correspond to the number of terms to be computed in the sum~\eqref{eqn: canform decomp}. 
Since the triangulations found by \cite{jsl-cosmo} are unimodular, the normalized volume of $\mathcal{C}_G$ provides complexity estimates on the computation of the desired wavefunction via the triangulation. 
This motivates the study of the normalized volume of the cosmological polytope and its refinements via combinatorial generating polynomials, such as the $h^\ast$-polynomial.

In this paper, we study the $h^\ast$-polynomial of the cosmological polytope $\mathcal{C}_G$. 
We prove that the $h^\ast$-polynomial of a cosmological polytope is multiplicative with respect to disconnected components (\Cref{prop:union}) as well as the $1$-sum graph operation (\Cref{thm: 1-sum}). 
We further provide tight lower bounds on the coefficients of the $h^\ast$-polynomial of $\mathcal{C}_G$ for any $G$ (\Cref{thm: lower bound}), which naturally yield lower bounds on the complexity of the computation of the canonical form via any unimodular triangulation. 
We also derive properties of the $h^\ast$-polynomial of $\mathcal{C}_G$ that are of general combinatorial interest, including its degree (\Cref{thm: degree}) and a characterization of palindromicity (\Cref{thm: gorenstein}). 
Finally, in \Cref{sec: examples}, we explicitly compute the $h^\ast$-polynomials of cosmological polytopes of multitrees (\Cref{thm:Ehrhart multitree}) and multicycles (\Cref{thm: h-star of cycle with multiedges}) using the triangulations of \cite{jsl-cosmo}. 
By evaluating the resulting polynomials at $1$, we obtain normalized volume formulas generalizing the results of \cite{kuhne2022faces} for simple trees and \cite{jsl-cosmo} for simple cycles. 
Some resulting conjectures are posed, including a proposed tight upper bound on the $h^\ast$-polynomial of a cosmological polytope (\Cref{conj: upper bound}) and a general combinatorial formula for the $h^\ast$-polynomial of the cosmological polytope for any graph $G$ (\Cref{conj: h*-formula}).

\section{The toric ideal of a cosmological polytope and triangulations}
In this section, we describe the Gr\"obner basis for the toric ideal of a cosmological polytope from \cite{jsl-cosmo} and the corresponding triangulation. 
In \Cref{thm: reduced gens}, we prove a result to be used later in the paper; namely, that a strict subset of this Gr\"obner basis is also a generating set for the ideal. 

\subsection{Ehrhart theory and half-open decompositions}
\label{sec:ehrhart}
We start by briefly reviewing the necessary Ehrhart theory (see \cite{ccd} and the original paper by \cite{Ehrhart} for more details). 

Given a  $d$-dimensional convex lattice polytope $P\subseteq\RR^n$ and a positive integer $t$, 
we let $tP = \{t\vp ~:~ \vp \in P\}$ denote the $t$-th \emph{dilate} of $P$. 
The \emph{Ehrhart series} of $P$ is 
\[
\Ehr(P; z) = 1 + \sum_{t > 0}|tP\cap\ZZ^n|z^t. 
\]
The Ehrhart series of $P$ may be expressed as a rational function
\[
\Ehr(P;z) = \frac{h^\ast(P;z)}{(1 - z)^{d+1}}=\frac{h^\ast_d(P)z^d+\ldots+h_1^\ast(P)z+h_0^\ast(P)}{(1-z)^{d+1}}, 
\]
in which the numerator $h^\ast(P;z)$ is a polynomial of degree at most $d$ with only nonnegative integer coefficients \cite{Stanley-Ehrhart} and constant term equal to $1$. The volume of $P$, denoted by $\vol(P)$, is equal to $h^\ast(P; 1)/d!$ and the value $\Vol(P) = d!\vol(P)$ is called the \emph{normalized volume} of $P$. 
If $P$ admits a \emph{unimodular} triangulation, i.e., a triangulation whose maximal simplices have normalized volume equal to $1$, then its normalized volume equals the number of maximal simplices in the triangulation. 
Moreover, such triangulations can be used to compute the $h^\ast$-polynomial via so-called \emph{half-open decompositions}, which we will now briefly describe. 

We say that a point $q \in \mathrm{aff}(P)$ is in \emph{general position relative to $P$}, if $q$ is not contained in the affine hull $\mathrm{aff}(F)$ of any facet $F \subseteq P$. 
Assuming $q$ is in general position relative to $P$,  a facet $F \subseteq P$ is called \emph{visible} from $q$, if $[p,q] \cap P=\{p\}$ for all $p \in F$, where $[p,q]$ denotes the line segment from $p$ to $q$. 
The facet $F \subseteq P$ gives rise to the hyperplane $\mathcal{H}=\mathrm{aff}(F)$ in  $\mathrm{aff}(P)$ that defines two closed halfspaces, and $F$ is visible from $q$ if and only if $q$ and $P$ do not lie in the same halfspace. 
By $\mathrm{vis}_q(P)$ we denote the set of facets of $P$ that are visible from $q$, i.e., 
\[
\mathrm{vis}_q(P)=\{F \subseteq P \mid F \text{ is a facet of } P \text{ and visible from } q\}.
\]
Given a polytope $P \subseteq \RR^n$, a triangulation $\mathcal{T}$ of $P$ with maximal simplices $\Delta_1,\ldots,\Delta_m$, and a point $q$ in general position relative to $P$ and to $\Delta_1,\ldots,\Delta_m$, let $\mathbb{H}_q\Delta_i$ be all points of $\Delta_i$ that are not contained in a from $q$ visible facet of $\Delta_i$ for $i=1,\ldots,m$. Such a set $\mathbb{H}_q\Delta_i$ is also called a \emph{half-open simplex}, and $\mathbb{H}_q\Delta_1,\ldots,\mathbb{H}_q\Delta_m$ yield a so-called \emph{half-open decomposition} of $P$, since $P=\mathbb{H}_q\Delta_1 \dot{\cup}\ldots\dot{\cup}\mathbb{H}_q\Delta_m$ is the disjoint union of $\mathbb{H}_q\Delta_1,\ldots,\mathbb{H}_q\Delta_m$ \cite[Lemma 5.3.4]{crt}. This implies
\begin{align*}
    \vert t P \cap \ZZ^n\vert=\vert t \mathbb{H}_q\Delta_1 \cap \ZZ^n\vert+\ldots+\vert t \mathbb{H}_q\Delta_m \cap \ZZ^n\vert,
\end{align*}
and with that one can show the following statement, which is Theorem 5.5.3 in \cite{crt} and relates half-open decompositions with $h^\ast$-polynomials:

\begin{theorem}
    \label{thm: visibility formula for h star}
    Let $P \subseteq \RR^n$ be a lattice polytope that admits a unimodular triangulation $\mathcal{T}$. Let $\Delta_1,\ldots,\Delta_m$ denote the maximal simplices of $\mathcal{T}$, and let $q \in P$ be in general position relative to~$P$, and to  all $\Delta_i$, $i \in [m]$. Then
    \begin{align*}
        h^\ast_i(P)=\vert \{j \in [m] \colon \vert \mathrm{vis}_q(\Delta_j)\vert=i\}\vert.
    \end{align*}
\end{theorem}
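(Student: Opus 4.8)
The plan is to deduce the statement from the half-open decomposition $P=\mathbb{H}_q\Delta_1\,\dot{\cup}\,\cdots\,\dot{\cup}\,\mathbb{H}_q\Delta_m$ recorded above, for which the lattice-point identity $\lvert tP\cap\ZZ^n\rvert=\sum_{j=1}^{m}\lvert t\mathbb{H}_q\Delta_j\cap\ZZ^n\rvert$ already holds. The crux will be the following claim: if $\Delta$ is a unimodular $d$-simplex, $q$ is in general position relative to $\Delta$, and $k=\lvert\mathrm{vis}_q(\Delta)\rvert$, then for $t\ge 1$ the count $\lvert t\mathbb{H}_q\Delta\cap\ZZ^n\rvert$ equals a polynomial $L_{\mathbb{H}_q\Delta}(t)$ whose generating function satisfies $\sum_{t\ge0}L_{\mathbb{H}_q\Delta}(t)z^t=z^{k}/(1-z)^{d+1}$; in particular $L_{\mathbb{H}_q\Delta}(0)=1$ if $k=0$ and $0$ otherwise. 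Granting the claim, I would sum the lattice-point identity against $z^t$ over $t\ge 1$, use $\sum_{t\ge1}L_{\mathbb{H}_q\Delta_j}(t)z^t=\frac{z^{k_j}}{(1-z)^{d+1}}-L_{\mathbb{H}_q\Delta_j}(0)$ with $k_j=\lvert\mathrm{vis}_q(\Delta_j)\rvert$, and observe that $q$ lies in the relative interior of exactly one $\Delta_j$, so exactly one $L_{\mathbb{H}_q\Delta_j}(0)$ equals $1$ and the others vanish. This gives
\[
\Ehr(P;z)=1+\sum_{t\ge1}\lvert tP\cap\ZZ^n\rvert z^t=\frac{1}{(1-z)^{d+1}}\sum_{j=1}^{m}z^{\lvert\mathrm{vis}_q(\Delta_j)\rvert},
\]
hence $h^\ast(P;z)=\sum_{j=1}^{m}z^{\lvert\mathrm{vis}_q(\Delta_j)\rvert}$, and comparing the coefficient of $z^i$ yields $h^\ast_i(P)=\lvert\{j\in[m]:\lvert\mathrm{vis}_q(\Delta_j)\rvert=i\}\rvert$.

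To prove the claim, I would label the facets $F_1,\dots,F_{d+1}$ of $\Delta$ so that $\mathrm{vis}_q(\Delta)=\{F_1,\dots,F_k\}$, and first check that $k\le d$: the outer facet normals of a simplex satisfy a linear relation with strictly positive coefficients, so they cannot all point strictly toward $q$. By definition $\mathbb{H}_q\Delta=\Delta\setminus(F_1\cup\cdots\cup F_k)$, so for $t\ge 1$ inclusion--exclusion gives
\[
\lvert t\mathbb{H}_q\Delta\cap\ZZ^n\rvert=\sum_{S\subseteq[k]}(-1)^{\lvert S\rvert}\Big\lvert\, t\big(\textstyle\bigcap_{i\in S}F_i\big)\cap\ZZ^n\,\Big\rvert=\sum_{j=0}^{k}(-1)^{j}\binom{k}{j}\binom{t+d-j}{d-j},
\]
since $\bigcap_{i\in S}F_i$ is the subsimplex of $\Delta$ on the $d+1-\lvert S\rvert$ vertices not opposite any $F_i$ with $i\in S$ (nonempty as $k\le d$), a face of a unimodular simplex is again a unimodular simplex, and a unimodular $e$-simplex has Ehrhart polynomial $\binom{t+e}{e}$. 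Denoting the right-hand side by $L_{\mathbb{H}_q\Delta}(t)$ and using $\sum_{t\ge0}\binom{t+e}{e}z^t=(1-z)^{-(e+1)}$ with $e=d-j$,
\[
\sum_{t\ge0}L_{\mathbb{H}_q\Delta}(t)\,z^t=\frac{1}{(1-z)^{d+1}}\sum_{j=0}^{k}(-1)^{j}\binom{k}{j}(1-z)^{j}=\frac{\big(1-(1-z)\big)^{k}}{(1-z)^{d+1}}=\frac{z^{k}}{(1-z)^{d+1}},
\]
which proves the claim.

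The computation itself is short; the part I expect to be the main obstacle is the geometric bookkeeping behind the claim — matching the stated definition of $\mathbb{H}_q\Delta$ to the inclusion--exclusion, verifying $k\le d$ so that every face $\bigcap_{i\in S}F_i$ that occurs is a genuine nonempty face of $\Delta$, confirming that such faces are unimodular with respect to the lattice in their affine spans, and tracking the constant terms (the half-open count is polynomial only for $t\ge 1$, but the polynomial values at $t=0$ sum to $1$ over the decomposition, consistent with $h^\ast_0(P)=1$). An alternative to the inclusion--exclusion step would be cone-theoretic: lift to the cone over $\Delta\times\{1\}$, where unimodularity makes the lifted vertices a lattice basis, note that deleting the $k$ visible facets multiplies the cone's integer-point transform by the product of the $k$ corresponding basis monomials, and specialize each basis coordinate to $z$ to recover $z^{k}/(1-z)^{d+1}$ directly.
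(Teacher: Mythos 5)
Your proposal is correct, and it supplies a complete proof of a statement that the paper itself does not prove but cites directly from \cite[Theorem 5.5.3]{crt}, after setting up the half-open decomposition and the lattice-point identity exactly as you use them. Your inclusion--exclusion computation of the Ehrhart series of a half-open unimodular simplex (yielding $z^{k}/(1-z)^{d+1}$), together with the observation that precisely one $\Delta_j$ contains $q$ in its relative interior to handle the $t=0$ boundary term, is the standard route taken in the cited textbook, so you are filling in the intended argument rather than diverging from it. The one point I would state more carefully is the fact that $k_j\ge 1$ whenever $q\notin\Delta_j$: since $q$ is in general position, $q$ lies off every facet hyperplane of $\Delta_j$, so $q\notin\Delta_j$ forces at least one facet inequality to be strictly violated, which is exactly visibility of that facet; combined with $k_j\le d$ (your positive-relation argument among the outer normals), this pins down $\widetilde{L}_j(0)$ in all cases.
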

In other words, $h^\ast_i(P)$ counts the number of maximal simplices in $\mathcal{T}$, that have exactly $i$ visible facets from $q$. 
 
\subsection{Toric ideals and Gr\"obner bases}
In the following, we provide the necessary background concerning toric ideals, restricted to the case that the underlying polytope is a cosmological polytope. 
We refer the reader to \cite{sturmfels1996grobner} for a general discussion of Gr\"obner bases and toric ideals. 
For our purposes it is sufficient to understand how one extracts a triangulation from a Gr\"obner basis, which is described in the following. 

We start with the definition of the toric ideal of a cosmological polytope.  
Given a graph $G = (V, E)$, we let $R_G$ be a polynomial ring over a field $K$ whose variables correspond to the lattice points of $\C_G$. More precisely, we have 
\begin{itemize}
    \item a variable $z_k$ for every $k \in V\cup E$, 
    \item variables $y_{ijf}$ and $y_{jif}$ for every $f = ij\in E$, and 
    \item a variable $t_{f}$ for every $f \in E$.
\end{itemize}
These three types of variables are collectively referred to as \emph{$z$-variables, $y$-variables} and \emph{$t$-variables}, respectively. 
The $z$-variables correspond to the unit vectors $\e_k$ which are contained in $\C_G$ for every $k\in (V\cup E)$, whereas the variables $y_{ijf}$, $y_{jif}$ and $t_{f}$, respectively, correspond to the vertices $\e_i-\e_j+\e_f$, $-\e_i+\e_j+\e_f$ and $\e_i+\e_j-\e_f$, respectively. 
Note that there are no other lattice points in $\C_G$.
Similarly, let $K[\C_G] = K[\vw^\vp~ : ~\vp\in (\C_G\cap \ZZ^{V\cup E})]$ be the \emph{toric ring} of $\C_G$; i.e., the ring generated by all (Laurent) monomials $\vw^\vp = \prod_{v\in V}w_v^{p_v}\prod_{f\in E}w_f^{p_f}$, where $\vp$ is a lattice point in $\C_G$.
The \emph{toric ideal} of $\C_G$, denoted $I_{\C_G}$, is defined as the kernel of the $K$-algebra homomorphism
\begin{equation*}
    \begin{split}
        \varphi_G:\, R_G &\longrightarrow K[\C_G]; \\
        z_k &\longmapsto w_k,\\
        y_{ijf} &\longmapsto w_iw_j^{-1}w_f,\\
        y_{jif} &\longmapsto w_i^{-1}w_jw_f,\\
        t_f &\longmapsto w_iw_jw_f^{-1}.
    \end{split}
\end{equation*}
Since this homomorphism is surjective, we obtain a presentation of $K[\C_G]$, and we have $K[\C_G]\cong R_G/I_{\C_G}$. 
The \emph{Hilbert series} of $K[\C_G]$ is defined as 
\[
\mathrm{Hilb}(K[\C_G]; z)=\sum_{n\geq 0}\dim_K(R_G/I_{\C_G})_nz^n,
\]
where the grading on $R_G/I_{\C_G}$ is defined via 
\[
\deg(z_k) = \deg(y_{ijf}) = \deg(y_{jif}) = \deg(t_f) = 1 
\]
and $(R_G/I_{\C_G})_n$ denotes the homogeneous component of $R_G/I_{\C_G}$ of degree $n$. 
Since $\C_G$ admits a unimodular triangulation due to \cite{jsl-cosmo}, $\C_G$ has the integer decomposition property (see, e.g. \cite[Proposition 2.60]{bruns2009gubeladze}) meaning that every lattice point $\vp \in t\C_G \cap \ZZ^{V \cup E}$ can be written as
$\vp=\vp_1+\cdots+\vp_t$, with $\vp_i \in \C_G \cap \ZZ^{V \cup E}$ for $i \in [t]$. 
With this, it follows from well-known results for general lattice polytopes that
\begin{equation}\label{eq:Hilb}
\mathrm{Hilb}(K[\C_G];z)=\Ehr(\C_G;z)=\frac{h^\ast(\C_G;z)}{(1-z)^{\dim\C_G+1}}.
\end{equation}
Hence, the Hilbert series of $K[\C_G]$ can be used to compute the $h^\ast$-polynomial of $\C_G$.

To access the $h^\ast$-polynomial of $\C_G$ from this algebraic perspective, it will be useful to understand generating sets for the toric ideal $I_{\C_G}$. 
Such a set is provided by a Gr\"obner basis of $I_{\C_G}$, and we now describe the Gr\"obner bases found in \cite{jsl-cosmo} for any cosmological polytope $\C_G$. 

Given $u,v \in V$ and a path $P$ in $G$ between $u$ and $v$ of length at least $2$, a partition $P_1\cup P_2$ of the edge set $E(P)$ of $P$ such that $P_1$ and $P_2$ contains the unique edge of $P$ incident to $u$ and $v$, respectively, gives rise to a so-called \emph{zig-zag pair} $(E_1,E_2)$ of $G$, where $E_1$ and $E_2$ is the set of edges from $P_1$ and $P_2$ directed  towards $v$ and $u$, respectively. 
Similarly, given a cycle $C$ of $G$ and a partition $C_1\cup C_2$ of $E(C)$ into two blocks (with one possibly empty), we let $E_1$ and $E_2$ be the set of edges in $C_1$ and $C_2$ oriented clockwise and counter-clockwise, respectively (where we have fixed an orientation beforehand). 
The pair $(E_1,E_2)$ is then called a \emph{cyclic pair}. In the following, given an edge $ij\in E$, we write $i\to j$ and $j\to i$ for the corresponding directed edges, directed from $i$ to $j$ and $j$ to $i$, respectively.

\begin{definition}
 Given a zig-zag pair $(E_1,E_2)$ its  \emph{zig-zag binomial} is
   \[
        b_{E_1,E_2} =
       \displaystyle z_{v}\prod_{f=i\to j\in E_1}y_{ijf}\prod_{f=i \to j\in E_2} z_f - z_{u}\prod_{f=i \to j\in E_2}y_{ijf}\prod_{f=i \to j\in E_1} z_f.
    \]   
   Given a cyclic pair $(E_1,E_2)$ its \emph{cyclic binomial} is
    \[            
    b_{E_1,E_2} = \displaystyle\prod_{f=i\to j\in E_1}y_{ijf}\prod_{f=i\to j\in E_2} z_f - \prod_{f=i \to j\in E_2}y_{ijf}\prod_{f= i\to j\in E_1} z_f.
    \]
    When either $E_1$ or $E_2$ is empty, the cyclic binomial $b_{E_1,E_2}$ is referred to as a \emph{cycle binomial}. 
\end{definition}

It is not hard to see that all binomials from the previous definition belong to the toric ideal $I_{\C_G}$. Moreover, the same can be easily verified for the  following set of binomials, referred to as \emph{fundamental binomials}:
\begin{equation}
\label{eqn: fundamental binomials}
\begin{split}
F_G = \{ &\underline{y_{ijf}y_{jif}}-z_f^2,~ \underline{y_{ijf}t_{f}}-z_i^2,~  \underline{y_{jif}t_{f}}-z_j^2,\\
		 &\underline{y_{ijf}z_j} - z_iz_f,~ \underline{y_{jif}z_i} - z_jz_f,~ \underline{t_{f}z_f} - z_iz_j~ :~ f=ij\in E\}.
\end{split}
\end{equation}

We note that cycle binomials have the property that one term consists only of $y$-variables and the other consists only of $z$-variables. With this in mind, \cite{jsl-cosmo} defined a special family of term orders on the ring $R_G$. 
A \emph{term order} $\preceq$ is a total order on the monomials of $R_G$ that is compatible with multiplication and  with the additional property that $1$ is the smallest monomial. Given a polynomial $p$ in $R_G$, its \emph{leading term} $\mathrm{lt}_{\preceq}(p)$ with respect to $\preceq$ is the monomial (appearing with non-zero coefficient) that is maximal with respect to $\preceq$.

\begin{definition}
    \label{def: good term order}
    A term order on $R_G$  is \emph{good} if the leading terms of the binomials in $F_G$ are the underlined terms in~\eqref{eqn: fundamental binomials}, and the leading terms of all cycle binomials are the terms containing only $y$-variables. 
\end{definition}

The following theorem, derived in \citep[Theorem~2.9]{jsl-cosmo}, gives a Gr\"obner basis for $I_{\C_G}$.

\begin{theorem}\citep[Theorem 2.9]{jsl-cosmo} \label{thm:GB}
The set 
\begin{equation}
\label{eqn: GB}
		B_G=F_G\cup \{b_{E_1,E_2} ~ : ~ (E_1,E_2) \text{ is a zig-zag pair or a cyclic pair of $G$}\}
\end{equation}
 is a Gr\"obner basis for the toric ideal $I_{\C_G}$ of the cosmological polytope of $G$ with respect to any good term order. 
\end{theorem}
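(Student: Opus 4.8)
The plan is to verify Buchberger's $S$-pair criterion for $B_G$ with respect to a fixed good term order $\preceq$. The set $B_G$ is finite (there are finitely many paths and cycles in $G$, each admitting finitely many edge partitions), and, as already noted, every binomial in $B_G$ lies in $I_{\C_G}$; so it suffices to show that $S(g_1,g_2)$ reduces to $0$ modulo $B_G$ for every pair $g_1,g_2\in B_G$. I would split this into three families of $S$-pairs according to the types of $g_1$ and $g_2$: (a) both fundamental; (b) one fundamental and one zig-zag or cyclic; (c) both zig-zag or cyclic.

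Case (a) is a finite, local check. The leading term of a fundamental binomial attached to an edge $f=ij$ involves only the six variables $z_i,z_j,z_f,y_{ijf},y_{jif},t_f$, so the leading terms of two fundamental binomials have a common factor only when they come from the same edge or from two edges sharing a vertex, in which case the common factor is a single $z$-variable. One verifies directly that each of these finitely many $S$-polynomials rewrites to $0$ using a bounded number of fundamental binomials; in particular $F_G$ is already a Gr\"obner basis of the ideal it generates.

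The heart of the proof is cases (b) and (c), and here the main device is a \emph{splitting/concatenation} principle. The leading term of a zig-zag or cyclic binomial records its underlying path or cycle together with the chosen orientation of each edge, so whenever the leading terms of two zig-zag/cyclic binomials, or of a zig-zag/cyclic and a fundamental binomial, share a factor, that factor is carried either by a common sub-path of the two configurations or by a single directed edge $i\to j$. In each configuration I expect that one rewriting step turns $S(g_1,g_2)$ into $\pm$ a monomial times another element of $B_G$: concatenating the two configurations along the overlap again yields a zig-zag or cyclic pair, whose binomial is the residual term, possibly followed by one further reduction by a cycle binomial or by a fundamental binomial such as $y_{ijf}y_{jif}-z_f^2$ or $y_{ijf}z_j-z_iz_f$. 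The step I expect to be the main obstacle is the orientation and sign bookkeeping: one must check that the directed-edge labels and the sign produced by each rewriting agree exactly with the prescriptions in the definitions of the zig-zag and cyclic binomials, that a concatenation which closes into a cycle produces a genuine cyclic (or cycle) binomial rather than something outside $B_G$, and that the degenerate subcases in which a block $E_1$ or $E_2$ becomes empty still land in $B_G$. This is also the point where the hypothesis of a \emph{good} term order is essential, since goodness is precisely what fixes, for each cycle binomial, which of its two terms is leading, and hence which monomials occur in the leading-term ideal.

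After all $S$-polynomials are reduced to $0$, Buchberger's criterion gives that $B_G$ is a Gr\"obner basis of $I_{\C_G}$ with respect to every good term order. As an alternative to the $S$-pair bookkeeping, one could observe that the monomial ideal $\langle \mathrm{lt}_\preceq(g) : g\in B_G\rangle$ is squarefree and identify the simplicial complex it defines as a unimodular triangulation of $\C_G$; since this ideal is contained in the initial ideal $\mathrm{in}_\preceq(I_{\C_G})$ and both quotients then have the same Hilbert function $t\mapsto |t\C_G\cap\ZZ^{V\cup E}|$, the two initial ideals must coincide. This variant trades the $S$-pair calculus for the combinatorics of deciding which vertex sets are faces of the triangulation, so the essential work is comparable.
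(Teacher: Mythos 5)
The paper does not prove this theorem: it is imported wholesale from \cite{jsl-cosmo} (Theorem~2.9 there), and the only addition here is a one-line remark that the same proof works for disconnected $G$. So there is no ``paper's own proof'' to compare against. With that caveat, a few comments on your proposal as a stand-alone plan.

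Both routes you sketch --- (i) Buchberger's $S$-pair criterion and (ii) the Hilbert-function argument (show the candidate initial ideal is the Stanley--Reisner ideal of a unimodular triangulation of $\C_G$, then compare Hilbert functions) --- are the standard tools for establishing Gr\"obner bases of toric ideals of lattice polytopes, and either one could in principle carry this theorem. Your second route is in fact closer in spirit to how the rest of the paper uses the result: \Cref{cor: unimodular triangulation} and the explicit triangulation combinatorics in Sections~4 rely on exactly the dictionary between squarefree leading terms and triangulations. The $S$-pair route is more likely what \cite{jsl-cosmo} does, since the remark about extending to disconnected $G$ is phrased as ``the proof also works,'' suggesting a local rewriting argument rather than a global Hilbert-function count.

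That said, your proposal is a strategy outline rather than a proof, and the load-bearing steps are all deferred. The passages ``one verifies directly,'' ``I expect that one rewriting step turns $S(g_1,g_2)$ into $\pm$ a monomial times another element of $B_G$,'' and ``the step I expect to be the main obstacle is the orientation and sign bookkeeping'' are precisely where the theorem lives; nothing in the proposal actually establishes that the concatenation of two overlapping zig-zag/cyclic configurations lands back in $B_G$, nor that the residual monomial factors are correct. Two specific gaps worth flagging: (1) you assert as a throwaway that $F_G$ is already a Gr\"obner basis of the ideal it generates, but $\langle F_G\rangle$ is generally a proper subideal of $I_{\C_G}$ (that is the whole point of \Cref{thm: reduced gens}, which shows $\langle F_G\rangle$ contains all zig-zag binomials but still needs the cyclic binomials to generate $I_{\C_G}$), so this claim at minimum needs an argument and at worst is misleading; (2) in the Hilbert-function alternative you write ``identify the simplicial complex it defines as a unimodular triangulation of $\C_G$,'' but establishing unimodularity and the covering/triangulation property is the entire content --- the reduction from ``Gr\"obner basis'' to ``the non-faces are exactly the right ones'' is not a simplification, it is a restatement. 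In short: the architecture is sound and matches standard practice, but as submitted it would not be accepted as a proof of the theorem.
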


We remark that in \cite{jsl-cosmo} the authors assumed that $G$ is a connected graph, but the statement is also true for disconnected graphs since the proof also works in that case.

Being a Gr\"obner basis, in particular implies that $B_G$ is a generating set for $I_{\C_G}$.
The next result shows, that $I_{\C_G}$ has  a significantly smaller generating set which will be crucial for the proof of \Cref{thm: 1-sum}.

%---THEOREM: reduced gens---
\begin{theorem}
\label{thm: reduced gens}
Let $G=(V,E)$ be a graph and $J_G$ be the ideal generated by the fundamental binomials of $G$. Then, $b_{E_1,E_2}\in J_G$ for any zig-zag pair $(E_1,E_2)$ of $G$. In particular, $I_{\C_G}$ is generated by the following set
\[
\widetilde{B}_G=F_G \cup \{b_{E_1,E_2} ~ : ~ (E_1,E_2) \text{ is a cyclic pair of $G$}\}.
\]
\end{theorem}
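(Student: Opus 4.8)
The plan is to prove the first assertion — that $b_{E_1,E_2}\in J_G$ for every zig-zag pair — by an explicit telescoping identity, and then to read off the generating-set statement directly from \Cref{thm:GB}. Fix a zig-zag pair $(E_1,E_2)$ arising from a path $P\colon u=v_0,v_1,\dots,v_\ell=v$ in $G$ with edges $f_k=v_{k-1}v_k$ and a partition $E(P)=P_1\cup P_2$ with $f_1\in P_1$ and $f_\ell\in P_2$. Set $S_1=\{k:f_k\in P_1\}$ and $S_2=\{k:f_k\in P_2\}$, so $1\in S_1$ and $\ell\in S_2$, and unravel the definition to write $b_{E_1,E_2}=A-B$ with
\[
A=z_{v_\ell}\prod_{k\in S_1}y_{v_{k-1}v_k f_k}\prod_{k\in S_2}z_{f_k},\qquad B=z_{v_0}\prod_{k\in S_2}y_{v_k v_{k-1}f_k}\prod_{k\in S_1}z_{f_k}.
\]

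The key step is to produce a chain of monomials $A=M_\ell,M_{\ell-1},\dots,M_0=B$ in which $M_{j-1}$ differs from $M_j$ by a single fundamental binomial attached to the edge $f_j$. I would do this by "passing a token": starting from the factor $z_{v_\ell}$ of $A$, process the edges $f_\ell,f_{\ell-1},\dots,f_1$ in order, and at edge $f_j$, if $j\in S_1$ replace the submonomial $y_{v_{j-1}v_j f_j}z_{v_j}$ by $z_{v_{j-1}}z_{f_j}$, while if $j\in S_2$ replace $z_{v_j}z_{f_j}$ by $y_{v_j v_{j-1}f_j}z_{v_{j-1}}$ — in both cases this is, up to sign, one of the fundamental binomials $\underline{y_{ijf}z_j}-z_iz_f$, and the token $z_{v_j}$ migrates to $z_{v_{j-1}}$. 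A short induction establishes the invariant
\[
M_j=z_{v_j}\prod_{\substack{k\in S_1\\k\le j}}y_{v_{k-1}v_k f_k}\prod_{\substack{k\in S_2\\k\le j}}z_{f_k}\prod_{\substack{k\in S_1\\k>j}}z_{f_k}\prod_{\substack{k\in S_2\\k>j}}y_{v_k v_{k-1}f_k},
\]
which specializes to $M_\ell=A$ and $M_0=B$, and which also certifies that each $M_j$ is a genuine monomial, since the factor deleted at step $j$ is visibly present in $M_j$. Each difference $M_j-M_{j-1}$ is then a monomial multiple of the fundamental binomial on $f_j$, hence lies in $J_G$, and summing the telescope gives $b_{E_1,E_2}=A-B=\sum_{j=1}^\ell(M_j-M_{j-1})\in J_G$.

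For the final sentence, \Cref{thm:GB} gives $I_{\C_G}=\langle B_G\rangle$ with $B_G=F_G\cup\{b_{E_1,E_2}\}$ over all zig-zag and cyclic pairs, and $\widetilde{B}_G$ is exactly $B_G$ with the zig-zag binomials removed; by the first part those removed binomials already lie in $J_G=\langle F_G\rangle\subseteq\langle\widetilde{B}_G\rangle$, so $\langle B_G\rangle\subseteq\langle\widetilde{B}_G\rangle\subseteq I_{\C_G}=\langle B_G\rangle$, forcing equality. The only genuine care-point is verifying the token-passing invariant, and with it that one never attempts to cancel an absent variable; here it is used that $P$ is a simple path, so the $v_j$ are distinct and the $f_k$ are distinct elements of the edge multiset, which keeps the variables touched at different steps from colliding. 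Everything else is a formal verification.
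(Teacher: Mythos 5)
Your proof is correct and is essentially the paper's argument in a different packaging: both reduce the zig-zag binomial modulo the fundamental binomials one edge at a time, starting from the end incident to $v$ and passing toward $u$. The paper phrases this as an induction on path length (which forces it to introduce the auxiliary notion of \emph{almost zig-zag pairs} so the induction closes), while you unroll that induction into an explicit telescoping chain of monomials $M_\ell,\dots,M_0$ with a closed-form invariant; the two computations coincide step for step, and your verification of the invariant and of the final ideal-membership argument is sound.
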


\begin{proof}
 To show the claim, we prove a stronger statement: Given a path $P$ of $G$ from $u$ to $v$ of length at least $2$, a partition $(P_1,P_2)$ of the edges of $P$, such that $P_1$ contains the unique edge of $P$ incident to $u$, we let $E_1$ and $E_2$ be the edges of $P_1$ and $P_2$ directed towards $v$ and $u$, respectively. The pair $(E_1,E_2)$ is called an \emph{almost zig-zag pair}. Note that $(E_1,E_2)$ is a zig-zag pair if and only if the edge incident to $v$ belongs to $E_2$. In particular, we allow that $P_2=E_2=\emptyset$. An almost zig-zag pair gives rise to a zig-zag binomial in exactly the same way as a zig-zag pair does. Moreover, we will say that a zig-zag pair has \emph{length} $k$ if the underlying path has, i.e., $|E_1\cup E_2|=k$. We show by induction on the length that $b_{(E_1,E_2)}\in J_G$ for every \emph{almost} zig-zag pair $(E_1,E_2)$. 

For the base of induction, assume that $(E_1,E_2)$ is an almost zig-zag pair of length $2$. Let $P=uvw$ be a path of $G$ of length $2$, such that $e=uv\in E$ and $f=vw\in E$ and let $(E_1,E_2)$ be an almost zig-zag pair for $P$. In the following, we write $y_{uv}$ and $y_{vu}$ for the variables $y_{uve}$ and $y_{vue}$, respectively, and similar for other edges, since no confusion can arise due to possible multiple edges.

If $(E_1,E_2)$ is not just an almost zig-zag pair, but a zig-zag pair in the original sense, then we have $E_1=\{u\to v\}$ and $E_2=\{w \to v\}$. The corresponding zig-zag binomial is then
\[
b_{E_1,E_2}=z_wy_{uv}z_f-z_ey_{wv}z_u.
\]
We can rewrite this binomial as follows
\[
b_{E_1,E_2}=y_{uv}(z_wz_f-y_{wv}z_v)-y_{wv}(z_uz_e-y_{uv}z_v).
\]
Since $z_wz_f-y_{wv}z_v$ and $z_uz_e-y_{uv}z_v$ are fundamental binomials of the edge $f$ and $e$, respectively, we conclude that $b_{E_1,E_2}\in J_G$. 

If $(E_1,E_2)$ is an almost zig-zag pair but not a zig-zag pair, then we have $E_1=\{u\to v, v\to w\}$ and $E_2=\emptyset$. The corresponding zig-zag binomial can then be written as
\begin{align*}
b_{E_1,E_2}=&z_wy_{uv}y_{vw}-z_uz_ez_f\\
=&y_{uv}(z_wy_{vw}-z_vz_f)+z_f(z_vy_{uv}-z_uz_e).
\end{align*}
Since $z_wy_{vw}-z_vz_f$ and $z_vy_{uv}-z_uz_e$ are again fundamental binomials for the edge $f$ and $e$, respectively, if follows that $b_{E_1,E_2}\in J_G$.

For the induction step, let $k>2$ and consider a path $P$ of length $k$ in $G$ with $E(P)=\{i_1i_2,i_2i_3,\ldots,i_ki_{k+1}\}$. Let $P_1\cup P_2$ be a partition of $E(P)$ with $i_1i_2\in P_1$ and let $(E_1,E_2)$ be the corresponding almost zig-zag pair. The corresponding zig-zag binomial is then
\begin{equation}\label{eq:reduce}
 b_{E_1,E_2} =
       \displaystyle z_{i_{k+1}}\prod_{e=i\to j\in E_1}y_{ij}\prod_{e=i \to j\in E_2} z_e - z_{i_1}\prod_{e=i \to j\in E_2}y_{ij}\prod_{e=i \to j\in E_1} z_e.
\end{equation}
Let $f=i_ki_{k+1}$ be the last edge of $P$. To simplify notation, in the following, we will denote by $\overrightarrow{f}$ and $\overleftarrow{f}$ the edge $f$ directed towards $i_{k+1}$ and $i_k$, respectively. We distinguisth two cases.

{\sf Case 1:}  $f\in P_2$, i.e., $(E_1,E_2)$ is a zig-zag pair.
Since $f\in P_2$, the first monomial on the right-hand side of \eqref{eq:reduce} is divisible by $z_fz_{i_{k+1}}$. Using the fundamental binomial $z_{i_k}y_{i_{k+1}i_k}-z_fz_{i_{k+1}}$ of $f$, we can rewrite $b_{E_1,E_2}$ as follows
\begin{align*}
b_{E_1,E_2}= &\displaystyle (z_fz_{i_{k+1}}-z_{i_k}y_{i_{k+1}i_k}) \prod_{e=i\to j\in E_1}y_{ij}\prod_{e=i \to j\in E_2\setminus\{\overleftarrow{f}\}} z_e \\
&\displaystyle- z_{i_1}\prod_{e=i \to j\in E_2}y_{ij}\prod_{e=i \to j\in E_1} z_e +z_{i_k}y_{i_{k+1}i_k}\prod_{e=i\to j\in E_1}y_{ij}\prod_{e=i \to j\in E_2\setminus\{\overleftarrow{f}\}} z_e\\
=&\displaystyle (z_fz_{i_{k+1}}-z_{i_k}y_{i_{k+1}i_k}) \prod_{e=i\to j\in E_1}y_{ij}\prod_{e=i \to j\in E_2\setminus\{\overleftarrow{f}\}} z_e \\
&\displaystyle+ y_{i_{k+1}i_k}\left(-z_{i_1}\prod_{e=i \to j\in E_2\setminus\{\overleftarrow{f}\}}y_{ij}\prod_{e=i \to j\in E_1} z_e +z_{i_k}\prod_{e=i\to j\in E_1}y_{ij}\prod_{e=i \to j\in E_2\setminus\{\overleftarrow{f}\}} z_e\right),
\end{align*}
where for the second equality we have used that $\overleftarrow{f}\in E_2$. 
Since $f$ is the unique edge of $P$ incident to $i_{k+1}$ and since $(E_1,E_2)$ is an almost zig-zag pair, so is $(E_1,E_2\setminus \{\overleftarrow{f}\})$. In particular, the binomial in the last parentheses above equals $b_{E_1,E_2\setminus\{\overleftarrow{f}\}}$, which, by induction, belongs to $J_G$. The claim then follows since also $(z_fz_{i_{k+1}}-z_{i_k}y_{i_{k+1}i_k})\in J_G$. 

{\sf Case 2:} $f\notin P_2$ and hence, $f\in P_1$. This implies that the first monomial on the right-hand side of \eqref{eq:reduce} is divisible by $z_{i_{k+1}}y_{i_ki_{k+1}}$ and, similar to the first case, we can rewrite $b_{E_1,E_2}$ as follow
\begin{align*}
b_{E_1,E_2}= &\displaystyle (z_{i_{k+1}}y_{i_ki_{k+1}}-z_{i_k}z_f) \prod_{e=i\to j\in E_1\setminus \{\overrightarrow{f}\}}y_{ij}\prod_{e=i \to j\in E_2} z_e \\
&\displaystyle- z_{i_1}\prod_{e=i \to j\in E_2}y_{ij}\prod_{e=i \to j\in E_1} z_e +z_{i_k}z_f\prod_{e=i\to j\in E_1\setminus \{\overrightarrow{f}\}}y_{ij}\prod_{e=i \to j\in E_2} z_e \\
=&\displaystyle (z_{i_{k+1}}y_{i_ki_{k+1}}-z_{i_k}z_f) \prod_{e=i\to j\in E_1\setminus \{\overrightarrow{f}\}}y_{ij}\prod_{e=i \to j\in E_2} z_e \\
&\displaystyle+z_f\left(z_{i_k}\prod_{e=i\to j\in E_1\setminus \{\overrightarrow{f}\}}y_{ij}\prod_{e=i \to j\in E_2} z_e -z_{i_1}\prod_{e=i \to j\in E_2}y_{ij}\prod_{e=i \to j\in E_1\setminus\{\overrightarrow{f}\}} z_e\right)\\
\end{align*}
Similar to Case 1, it follows that $(E_1\setminus\{\overrightarrow{f}\},E_2)$ is an almost zig-zag pair and the binomial in the last parentheses above equals $b_{E_1\setminus\{\overrightarrow{f}\},E_2}$, which by induction belongs to $J_G$. As $z_{i_{k+1}}y_{i_ki_{k+1}}-z_{i_k}z_f$ is a fundamental binomial, we infer $b_{E_1,E_2}\in J_G$.

The \emph{in particular}-statement is now an immediate consequence.
\end{proof}

\subsection{Unimodular triangulations of cosmological polytopes}\label{sec:unimodular}
The goal of this subsection is to describe how to deduce a unimodular triangulation from \Cref{thm:GB}. It follows from a well-known result of \cite{sturmfels1996grobner} that, for a fixed graph $G$, every Gr\"obner basis of the toric ideal $I_{\C_G}$ gives rise to a regular triangulation of the cosmological polytope $\C_G$. 
Moreover, since the Gr\"obner basis from \Cref{thm:GB} has only squarefree leading terms the corresponding triangulation is unimodular. 
By the definition of $R_G$, every squarefree monomial $m$ in $R_G$ corresponds to a unique set of lattice points $S_m$ in $\C_G$. 
We will say that a given set $S$ of lattice points in $\C_G$ \emph{contains a monomial $m\in R_G$} if $S_m\subseteq S$. The precise description of the triangulation corresponding to $B_G$ for a fixed term order $\preceq$ is as follows:

\begin{corollary}\label{cor: unimodular triangulation}
    Let $G=(V,E)$ be a graph and let $\preceq$ be a good term order on $R_G$. The set
    \[
    \mathcal{T}_G=\{\conv(S)~:~ S\subseteq \C_G\cap\ZZ^{V\cup E},\; |S|=|V|+|E|,\; S \text{ does not contain $\mathrm{lt}_{\preceq}(b)$ for }b\in B_G\}
    \]
  provides a  regular unimodular triangulation of $\C_G$.
\end{corollary}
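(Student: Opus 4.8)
The plan is to derive \Cref{cor: unimodular triangulation} as a direct consequence of the standard correspondence between Gröbner bases of toric ideals and regular triangulations, as developed in \cite{sturmfels1996grobner}. The key fact is: for a lattice polytope $P$ with toric ideal $I_P$ (using a homogenizing/grading so that the lattice points lie on an affine hyperplane, which is automatic here since each generating monomial of $K[\C_G]$ has degree $1$ in the grading $\deg(z_k)=\deg(y_{ijf})=\deg(t_f)=1$), a term order $\preceq$ on the polynomial ring determines the \emph{initial ideal} $\mathrm{in}_\preceq(I_P)$, and the radical of this initial ideal is the Stanley--Reisner ideal of a regular triangulation $\Delta_\preceq$ of $P$. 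Concretely, a subset $S$ of the lattice points of $P$ spans a face of $\Delta_\preceq$ if and only if the squarefree monomial $\prod_{\vp\in S}x_\vp$ does \emph{not} lie in $\mathrm{in}_\preceq(I_P)$, equivalently, is not divisible by any minimal generator of $\mathrm{in}_\preceq(I_P)$.

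First I would invoke \Cref{thm:GB}: the set $B_G$ is a Gröbner basis for $I_{\C_G}$ with respect to any good term order $\preceq$, so the leading terms $\mathrm{lt}_\preceq(b)$ for $b\in B_G$ generate $\mathrm{in}_\preceq(I_{\C_G})$. Next I would observe that every such leading term is squarefree: the underlined leading terms of the fundamental binomials in~\eqref{eqn: fundamental binomials} are visibly squarefree, and for a zig-zag or cyclic pair the leading term is a product of distinct $y$- and $z$-variables indexed by the distinct directed edges of a path or cycle together with at most one endpoint variable, hence squarefree as well. Therefore $\mathrm{in}_\preceq(I_{\C_G})$ is a squarefree (radical) monomial ideal, so it equals its own radical and is the Stanley--Reisner ideal of the regular triangulation $\Tc_G$ determined by $\preceq$. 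The maximal faces of this triangulation are the inclusion-maximal squarefree monomials not in $\mathrm{in}_\preceq(I_{\C_G})$; translating back via the bijection between squarefree monomials $m$ and lattice-point sets $S_m$, these are exactly the sets $S\subseteq\C_G\cap\ZZ^{V\cup E}$ that do not contain $\mathrm{lt}_\preceq(b)$ for any $b\in B_G$.

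Then I would pin down the cardinality condition $|S|=|V|+|E|-1$. Since the triangulation $\Tc_G$ is regular (hence a genuine triangulation of the full polytope $\C_G$), every maximal face is a simplex of dimension $\dim\C_G=|V|+|E|-1$, and because the leading terms are squarefree the triangulation is unimodular, so each maximal simplex is the convex hull of exactly $\dim\C_G+1=|V|+|E|$ affinely independent lattice points; one checks that a set $S$ of lattice points with no ``forbidden'' submonomial automatically has affinely independent members and, when maximal, has exactly $|V|+|E|$ elements. Wait --- I should double-check the indexing: the statement writes $|S|=|V|+|E|-1$, which would be the \emph{dimension} of $\C_G$, so either the intended reading is that $\conv(S)$ is a simplex of that dimension (i.e.\ $|S|=|V|+|E|$) or the count includes a different normalization; I would reconcile this by recalling that $\dim\C_G=|V|+|E|-1$ and that a $d$-simplex has $d+1$ vertices, and state the maximal faces accordingly, matching whichever convention the paper fixes in \Cref{sec:ehrhart}. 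Modulo this bookkeeping, the result is immediate from \Cref{thm:GB} plus the cited theorem of \cite{sturmfels1996grobner}.

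The main obstacle here is not any deep argument but rather making the translation between the algebraic object (the radical initial monomial ideal / its Stanley--Reisner complex) and the stated combinatorial description precise: one must verify that ``$S$ does not contain $\mathrm{lt}_\preceq(b)$ for $b\in B_G$'' is genuinely equivalent to ``$\prod_{\vp\in S}x_\vp\notin\mathrm{in}_\preceq(I_{\C_G})$'', which uses both that $B_G$ is a Gröbner basis (so its leading terms generate the initial ideal) and that these leading terms are squarefree (so divisibility of a squarefree monomial by a minimal generator is detected set-theoretically on the supports). The rest --- regularity, unimodularity, and that the pieces form a triangulation of all of $\C_G$ --- is quoted verbatim from \cite{sturmfels1996grobner} and the squarefreeness observation already made in the surrounding text.
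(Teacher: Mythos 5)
Your proposal is correct and follows essentially the same route the paper takes: the paper itself does not prove this corollary but cites it, relying on precisely the Sturmfels correspondence you invoke — a Gr\"obner basis of the toric ideal yields a regular triangulation whose Stanley--Reisner ideal is the radical of the initial ideal, and squarefreeness of the leading terms in $B_G$ gives unimodularity. The off-by-one issue you flag is genuine: since $\dim\C_G=|V|+|E|-1$, a maximal simplex of a triangulation has $|V|+|E|$ vertices, and indeed the paper's own later arguments (e.g.\ the proofs of \Cref{lem: induced G_S on one multi edge} and \Cref{thm: cycle triangulation}) use $|S|=|V|+|E|$ for a facet of $\mathcal{T}_G$, so the ``$|V|+|E|-1$'' in the corollary statement appears to be a typo that your bookkeeping correctly reconciles.
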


This result was shown in \cite{jsl-cosmo}.
We include it here since we will use it to construct the corresponding unimodular triangulations of the cosmological polytopes of multitrees and multicycles for specifically chosen term orders.

Following \cite{jsl-cosmo} we will represent facets of the triangulation $\mathcal{T}_G$ not as their set of lattice points but via a certain decorated subgraph of $G$. To do so we represent lattice points, or equivalently, variables of $R_G$ by four different types of edges and two different kinds of nodes. More precisely, given an edge $f=ij\in E$:
\begin{itemize}
    \item[$\bullet$] the variable $z_i$ (i.e., the lattice point $\e_i$) is represented by a \emph{white} node $\circ$.  The vertex $i$ is instead represented by a \emph{black} node $\bullet$ if $z_i$ is not present. 
    \item[$\bullet$] the variable $z_{f}$ (i.e., the lattice point $\e_f$) is represented by the edge type $-$,
    \item[$\bullet$] the variable $t_{e}$ (i.e., the lattice point $\e_i+\e_j-\e_f$) is represented by the edge type $\middlewave{0.5cm}$, called \emph{squiggly} edge in the following,
    \item[$\bullet$] the variable $y_{ijf}$ (i.e., the lattice point $\e_i-\e_j+\e_f$) is represented by the edge type $\rightarrow$ pointing from $i$ to $j$, and 
    \item[$\bullet$] the variable $y_{jif}$ (i.e., the lattice point $-\e_i+\e_j+\e_f$) is represented by the edge type $\leftarrow$ pointing from $j$ to $i$. 
\end{itemize}
Given a set $S$ of lattice points of $\C_G$, we let $G_S$ be the graph drawn according to the above interpretations of the points in $S$. This interpretation will be used in Sections \ref{subsec: cycles}  and \ref{subsec: tree polynomial}.

\section{General results on \texorpdfstring{$h^\ast$}{h*}-polynomials of cosmological polytopes}
In this section, we provide general results for $h^\ast$-polynomials of cosmological polytopes. In particular, we study the behavior of the $h^\ast$-polynomial under disjoint unions and $1$-sums of graphs. Moreover, we explicitly compute its degree and characterize graphs whose cosmological polytope is Gorenstein, i.e., which have palindromic $h^\ast$-polynomial.

\subsection{Graph operations for computing \texorpdfstring{$h^\ast$}{h*}-polynomials}
\label{subsec: graph operations}

Let $G=(V,E)$ and $H=(W,F)$ be graphs on disjoint vertex sets. 
Given vertices $v\in V$ and $w\in W$, the \emph{$1$-sum} $G\oplus_{v,w}H$ of $G$ and $H$ with respect to $v$ and $w$ is the graph obtained by taking the union of $G$ and $H$ and identifying the vertices $v$ and $w$. More formally, $G\oplus_{v,w} H$ is the graph on vertex set $(V\cup W)\setminus\{v,w\}\cup\{x_{vw}\}$ and edge set
\[
(E\cup F)\setminus (\{uv~:~uv\in E\}\cup \{uw~:~uw\in F\})\cup \{ux_{vw}~:~uv\in E \mbox{ or } uw\in F\}.
\]
Note that if $G$ and $H$ have multiple edges containing $v$ and $w$, respectively, then also $G\oplus_{v,w}H$ has multiple edges containing $x_{vw}$. 
In particular, the edge set of a graph may be a multiset.

When the identified vertices $v,w$ are clear from the context, we will write $G\oplus H$ instead of $G\oplus_{v,w} H$. 
Using the Hilbert series expression for $h^\ast(\C_G;z)$ in~\Cref{eq:Hilb}, we obtain the following.

\begin{proposition}\label{prop:union}
Let $G$ be a graph with connected components $G_1,\ldots,G_k$. Then,
\[
h^{\ast}(\C_{G};z)=\prod_{i=1}^k h^\ast(\C_{G_i};z).
\]
\end{proposition}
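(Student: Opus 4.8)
The plan is to reduce the statement to the algebraic fact that the toric ring of a disjoint union of graphs factors as a tensor product of the toric rings of the components, and then to pass to Hilbert series. First I would observe that if $G$ has connected components $G_1,\dots,G_k$, then the vertex and edge sets decompose as disjoint unions $V=V_1\sqcup\cdots\sqcup V_k$ and $E=E_1\sqcup\cdots\sqcup E_k$, so the lattice point set $\C_G\cap\ZZ^{V\cup E}$ and hence the polynomial ring $R_G$ splits: the variables attached to edges and vertices of $G_i$ involve only coordinates in $V_i\cup E_i$. Concretely, $R_G\cong R_{G_1}\otimes_K\cdots\otimes_K R_{G_k}$, and under $\varphi_G$ the images of the generators coming from $G_i$ live in the subring $K[w_\alpha^{\pm1}:\alpha\in V_i\cup E_i]$ of $K[\C_G]$. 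Since these Laurent-variable subrings involve pairwise disjoint sets of variables, the toric ideal factors: $I_{\C_G}=\sum_{i=1}^k I_{\C_{G_i}}R_G$ with the $I_{\C_{G_i}}$ involving disjoint variable sets, and therefore $R_G/I_{\C_G}\cong \bigotimes_{i=1}^k R_{G_i}/I_{\C_{G_i}}$ as graded $K$-algebras, where the grading on the tensor product is the total degree.

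The second step is the standard fact that the Hilbert series of a tensor product of graded algebras (over the field $K$, with the total-degree grading) is the product of the Hilbert series: $\dim_K (A\otimes_K B)_n=\sum_{a+b=n}\dim_K A_a\dim_K B_b$, so $\mathrm{Hilb}(A\otimes_K B;z)=\mathrm{Hilb}(A;z)\cdot\mathrm{Hilb}(B;z)$. Applying this to $R_G/I_{\C_G}\cong\bigotimes_i R_{G_i}/I_{\C_{G_i}}$ gives
\[
\mathrm{Hilb}(K[\C_G];z)=\prod_{i=1}^k \mathrm{Hilb}(K[\C_{G_i}];z).
\]
Finally I would invoke \eqref{eq:Hilb}, which identifies $\mathrm{Hilb}(K[\C_G];z)$ with $h^\ast(\C_G;z)/(1-z)^{\dim\C_G+1}$, together with the additivity of dimension $\dim\C_G=|V|+|E|-1=\sum_{i=1}^k(|V_i|+|E_i|-1)+(k-1)=\sum_{i=1}^k(\dim\C_{G_i}+1)-1$, so that $\dim\C_G+1=\sum_{i=1}^k(\dim\C_{G_i}+1)$. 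Substituting into the product of Hilbert series, the powers of $(1-z)$ in the denominators match up exactly, and cancelling them yields $h^\ast(\C_G;z)=\prod_{i=1}^k h^\ast(\C_{G_i};z)$.

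The only point requiring a little care — and the one I would expect to be the main (minor) obstacle — is justifying the tensor-product decomposition $R_G/I_{\C_G}\cong\bigotimes_i R_{G_i}/I_{\C_{G_i}}$ cleanly. One has to check that $I_{\C_G}$ is generated by (the images of) the $I_{\C_{G_i}}$, i.e., that no new binomial relations appear across components. This is immediate from the structure of $\varphi_G$: a binomial $\vw^{\vp}-\vw^{\vp'}$ lies in $I_{\C_G}$ iff $\varphi_G$ equates the two monomials, and since the $w$-variables split into disjoint blocks indexed by the components, this happens iff the corresponding block-by-block relations hold, each of which lies in the respective $I_{\C_{G_i}}$. (Alternatively, one can cite \Cref{thm:GB}: the Gröbner basis $B_G$ of a disconnected graph is literally the disjoint union of the $B_{G_i}$, since every zig-zag or cyclic pair, and every fundamental binomial, is supported on a single connected component.) With that observation in hand the rest is the routine Hilbert-series and dimension bookkeeping described above.
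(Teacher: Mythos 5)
Your proposal is correct and follows essentially the same route as the paper: decompose the lattice points by connected components, deduce $R_G/I_{\C_G}\cong\bigotimes_i R_{G_i}/I_{\C_{G_i}}$, multiply Hilbert series, and use $\dim\C_G+1=\sum_i(\dim\C_{G_i}+1)$ to cancel denominators in \eqref{eq:Hilb}. The paper states the ideal decomposition without elaboration, whereas you flag and justify it (via the block structure of $\varphi_G$ or via \Cref{thm:GB}); otherwise the arguments coincide.
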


\begin{proof}
    It suffices to show the statement for $k=2$. Let $G_1=(V_1,E_1)$ and $G_2=(V_2,E_2)$ be the connected components of $G=(V,E)$. 
    Since the only lattice points contained in $\C_G$ are its vertices and the unit vectors corresponding to the vertices and edges of $G$, we have 
    \[
    \C_G\cap \ZZ^{V\cup E}=(\C_{G_1}\cap \ZZ^{V_1\cup E_1})\times \{0\}^{V_2\cup E_2}\cup \{0\}^{V_1\cup E_1}\times (\C_{G_2}\cap \ZZ^{V_2\cup E_2}),
    \]
    where we order the coordinates in such a way that the coordinates corresponding to $V_1\cup E_1$ come first. Note that the above union is disjoint. It follows that 
    $R_{G}\cong R_{{G_1}}\otimes_K R_{{G_2}}$ and $I_{G}=I_{{G_1}}+I_{{G_2}}$, where we abuse notation and consider $I_{{G_1}}$ and $I_{{G_2}}$ as ideals in $R_{G}$. 
    Hence, we obtain
    \begin{align*}
    K[\C_G]\cong R_{G}/I_{G}
    &\cong (R_{{G_1}}\otimes_K R_{{G_2}})/(I_{{G_1}}+I_{{G_2}})\\
    &\cong  R_{{G_1}}/I_{{G_1}}\otimes_K  R_{{G_2}}/I_{{G_2}}\\
    &\cong K[\C_{G_1}]\otimes_K K[\C_{G_2}]
    \end{align*}
    which implies that 
    \[
    \mathrm{Hilb}(K[\C_G];z)=\mathrm{Hilb}(K[\C_{G_1}];z)\cdot \mathrm{Hilb}(K[\C_{G_2}];z).
    \]
    As $\dim \C_G=\dim\C_{G_1}+\dim\C_{G_2}+1$, the claim follows from \eqref{eq:Hilb}.
\end{proof}

With the help of \Cref{thm: reduced gens}, we may then prove the following general result.

\begin{theorem}
    \label{thm: 1-sum}
    Let $G=G_1\oplus G_2$ be a $1$-sum of graphs $G_1$ and $G_2$. Then
    \[
    h^\ast(\C_G;z)=h^\ast(\C_{G_1};z)\cdot h^\ast(\C_{G_2};z).
    \]
\end{theorem}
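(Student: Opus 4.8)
The plan is to mimic the proof of \Cref{prop:union}: reduce the statement about $h^\ast$-polynomials to an isomorphism of toric rings (or at least an equality of Hilbert series), using the dimension bookkeeping $\dim \C_{G} = \dim \C_{G_1} + \dim \C_{G_2}$ for a $1$-sum (here the ``$+1$'' that appeared for disjoint unions is absent, because identifying $v$ and $w$ removes one vertex). Concretely, I would first record that $R_G \cong R_{G_1} \otimes_K R_{G_2} / (z_{x_{vw}} - (\text{copy in } R_{G_1}))$-type relation; more precisely, the variable sets of $R_{G_1}$ and $R_{G_2}$ overlap exactly in the single $z$-variable $z_v = z_w = z_{x_{vw}}$ (after the identification), while all $y$- and $t$-variables of $G_1$ and $G_2$ are distinct, and the edge sets are disjoint. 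So $R_G \cong R_{G_1} \otimes_{K[z_{x_{vw}}]} R_{G_2}$, a fibre product over the polynomial subring generated by the shared variable.

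The main step is then to show $I_{\C_G} = I_{\C_{G_1}} + I_{\C_{G_2}}$ inside $R_G$, where again we abuse notation to regard $I_{\C_{G_1}}, I_{\C_{G_2}}$ as ideals of $R_G$. The inclusion $\supseteq$ is clear since the generating binomials of each piece already lie in $I_{\C_G}$. For $\subseteq$, this is where \Cref{thm: reduced gens} is essential: by that theorem $I_{\C_G}$ is generated by $\widetilde B_G = F_G \cup \{b_{E_1,E_2} : (E_1,E_2) \text{ cyclic pair}\}$, i.e.\ by fundamental binomials and cyclic binomials. Every fundamental binomial of $G$ is attached to a single edge $f \in E = E_1 \sqcup E_2$, hence lies in $F_{G_1} \subseteq I_{\C_{G_1}}$ or $F_{G_2} \subseteq I_{\C_{G_2}}$. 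Every cycle of $G$ lies entirely within $G_1$ or within $G_2$, since a $1$-sum has the two pieces meeting in a single cut vertex $x_{vw}$, so no cycle can use edges from both; likewise any path between two vertices witnessing... wait, we only need cyclic pairs, and each cyclic pair comes from a cycle contained in one $G_i$, so its cyclic binomial lies in $I_{\C_{G_i}}$. Thus $\widetilde B_G \subseteq I_{\C_{G_1}} + I_{\C_{G_2}}$, giving $\subseteq$. (This is exactly why the theorem is stated in terms of $\widetilde B_G$ rather than $B_G$: the full Gröbner basis $B_G$ also contains zig-zag binomials coming from paths that genuinely pass through the cut vertex $x_{vw}$ and thus use edges of both $G_1$ and $G_2$, and those are \emph{not} obviously in $I_{\C_{G_1}} + I_{\C_{G_2}}$ — they must instead be rewritten via \Cref{thm: reduced gens} into fundamental binomials of the individual pieces.)

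Having established $I_{\C_G} = I_{\C_{G_1}} + I_{\C_{G_2}}$ and the fibre-product description of $R_G$, I would conclude
\[
K[\C_G] \cong R_G / I_{\C_G} \cong \bigl(R_{G_1} \otimes_{K[z_{x_{vw}}]} R_{G_2}\bigr) / (I_{\C_{G_1}} + I_{\C_{G_2}}) \cong K[\C_{G_1}] \otimes_{K[z_{x_{vw}}]} K[\C_{G_2}],
\]
so that the Hilbert series multiply over the common one-variable base, i.e.
\[
\mathrm{Hilb}(K[\C_G];z) = \frac{\mathrm{Hilb}(K[\C_{G_1}];z)\cdot \mathrm{Hilb}(K[\C_{G_2}];z)}{\mathrm{Hilb}(K[z];z)} = (1-z)\,\mathrm{Hilb}(K[\C_{G_1}];z)\cdot \mathrm{Hilb}(K[\C_{G_2}];z).
\]
Combining with \eqref{eq:Hilb} and $\dim \C_G + 1 = (\dim \C_{G_1} + 1) + (\dim \C_{G_2} + 1) - 1$, the factors of $(1-z)$ balance and we obtain $h^\ast(\C_G;z) = h^\ast(\C_{G_1};z)\cdot h^\ast(\C_{G_2};z)$.

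The hard part will be the ring-theoretic bookkeeping in the fibre product — specifically verifying carefully that the presentation $R_G/I_{\C_G}$ really does decompose as a tensor product over $K[z_{x_{vw}}]$ and that no extra relations are hidden in the identification of the shared variable. The combinatorial heart, namely that every fundamental binomial and every cyclic binomial of a $1$-sum is supported in one of the two summands, is essentially immediate once one invokes \Cref{thm: reduced gens}; the subtlety that the plain Gröbner basis $B_G$ would not suffice here is precisely what that theorem was set up to handle.
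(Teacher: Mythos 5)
Your argument is essentially the paper's argument, rephrased: you realize $K[\C_G]$ as a tensor product over the shared variable, whereas the paper realizes it as a quotient of $K[\C_{G_1\sqcup G_2}]$ by the single relation $\vw^{\e_v}-\vw^{\e_w}$, and these are literally the same ring since $A\otimes_{K[z]}B\cong(A\otimes_K B)/\langle z_A-z_B\rangle$. Your ideal computation $I_{\C_G}=I_{\C_{G_1}}+I_{\C_{G_2}}$ via \Cref{thm: reduced gens}, the observation that every cycle of a $1$-sum lies entirely in one summand, and the dimension bookkeeping are all exactly as in the paper, and your parenthetical remark about why the full Gr\"obner basis $B_G$ would not suffice (zig-zag pairs through the cut vertex are not locally supported) is precisely the motivation for \Cref{thm: reduced gens} given in the paper.

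The one genuine gap is in the step you state most casually: passing from the ring isomorphism $K[\C_G]\cong K[\C_{G_1}]\otimes_{K[z]}K[\C_{G_2}]$ to the formula
\[
\mathrm{Hilb}(K[\C_G];z)=(1-z)\,\mathrm{Hilb}(K[\C_{G_1}];z)\cdot\mathrm{Hilb}(K[\C_{G_2}];z).
\]
This is not a formal consequence of having a tensor product over $K[z]$; it is equivalent to the assertion that the degree-one form $\vw^{\e_v}\otimes 1-1\otimes\vw^{\e_w}$ is a nonzerodivisor on $K[\C_{G_1}]\otimes_K K[\C_{G_2}]$ (equivalently, that one of the factors is flat, i.e.\ torsion-free, as a $K[z]$-module). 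If that form were a zerodivisor the Hilbert series of the quotient would be strictly larger than $(1-z)$ times the product, and the argument would collapse. The paper handles exactly this point by citing Bruns's Theorem~2.1 for monoid algebras; alternatively you can observe that $K[\C_{G_1}]\otimes_K K[\C_{G_2}]\cong K[\C_{G_1\sqcup G_2}]$ is an affine monoid algebra, hence a domain, so any nonzero homogeneous element, in particular $\vw^{\e_v}\otimes 1-1\otimes\vw^{\e_w}$, is automatically a nonzerodivisor. Either way, this must be said. By contrast, the issue you flag as ``the hard part'' --- that no extra relations hide in the identification of the shared variable --- is already fully settled by your ideal computation, so your own assessment of where the difficulty lies is slightly misplaced: the presentation is fine, the Hilbert-series step is what needs justification.
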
 

\begin{proof}
Let $v$ and $w$ be the vertices of $G_1$ and $G_2$, respectively, that have been identified by taking a $1$-sum. Further, we use $u$ to denote the new vertex in $G$, that is obtained by the identification of $v$ and $w$. Let $H$ be the disjoint union of $G_1$ and $G_2$. Let $ \Phi: V(H)\to V(G)$ be the natural surjective map that sends $v$ and $w$ to $u$ and that is the identity on all the other vertices. 
        $\Phi$ induces a surjective (even bijective) map from $E(H)$ to $E(G)$ by sending $ij\in E(H)$ to $\Phi(i)\Phi(j)\in E(G)$. For short, we set $\Phi(ij)=\Phi(i)\Phi(j)$. In this way, we can see $\Phi$ as a surjective map from $V(H)\cup E(H)$ to $V(G)\cup E(G)$. Therefore, we also get a surjective map $\tilde{\Phi}$ from $\C_H\cap \ZZ^{V(H)\cup E(H)}$ to $\C_G\cap\ZZ^{V(G)\cup E(G)}$. More  precisely, $a\in \ZZ^{V(H)\cup E(H)}$ is mapped to $b\in \ZZ^{V(H)\cup E(H)}$, where
        \[
        b_z=
        \begin{cases}
        a_v+a_w,\quad &\mbox{ if } z=u,\\
            a_{\Phi^{-1}(z)}, \quad &\mbox{ otherwise }
            
        \end{cases}.
        \]
        We note that $\tilde{\Phi}$ is well-defined since $\Phi$ is bijective when restricted to $(V(H)\setminus \{v,w\})\cup E(H)$ with image $(V(G)\setminus \{u\})\cup E(G)$. Moreover, since for every lattice point in $\C_H$ at least one of the coordinates $a_v$ and $a_w$ equals zero, we have $b_u=a_v$ or $b_u=a_w$. 

        As lattice points correspond to variables in the toric ring, we get a surjective ring homomorphism 
    $ \psi: K[\C_H]\to K[\C_G]$ sending the variable $x_\alpha$ to $x_{\tilde{\Phi}(\alpha)}$. In particular, note that $\psi(x_v)=\psi(x_w)=x_u$. Since every cycle in $G$ either corresponds to a cycle in $G_1$ or to a cycle in $G_2$ and as such to a cycle in $H$ (and vice versa) and since $\Phi$ is a bijection from $E(H)$ to $E(G)$, it follows from \Cref{thm: reduced gens}, that there is a natural $1-1$-correspondence between generators of $I_{\C_G}$ and generators of $I_{\C_H}$.  This implies that the kernel of $\psi$ is the ideal generated by the binomial $x_v-x_w$ and hence 
        \[
        R_G/I_{\C_G}\cong R_H/(I_H+\langle x_v-x_u\rangle).
        \]
        Since $K[\C_G]=K[\vw^\vp~:~ \vp\in \C_G\cap \ZZ^{V(G)\cup E(G)}]\cong R_G/I_{\C_G}$ (and similarly for $H$), $\psi$ also induces a surjective homomorphism between the monoid algebras (and hence also on the underlying monoids) $K[\C_H]$ and $K[\C_G]$ and by the above we have
        
        \[
       K[\C_G]\cong K[\C_H]/\langle \vw^{\e_v}-\vw^{\e_w}\rangle.
        \]
        Since under these homomorphisms the (variable corresponding to the) unit vectors of $v$ and $w$ have the same image (namely, the (variable corresponding to the) unit vector of $u$) and since $\dim \C_G=\dim \C_H-1$, we can conclude  with \cite[Theorem 2.1]{bruns} that
   \[
   \mathrm{Hilb}(K[\C_G];z)=\mathrm{Hilb}(K[\C_H];z)\cdot (1-z).
   \]
   The claim is now immediate from \Cref{prop:union}.
\end{proof}

\begin{example}
    \label{ex: loop graph}
    % Let $L_1 = (\{i\}, \{i,i\})$ be the graph with a single node and a single edge; e.g. a \emph{loop}. 
    Let $L_m$ denote the \emph{$m$-loop graph}, which is the graph with a single vertex $i$ and $m$ loops at node $i$. 
    It is straightforward to verify that $h^\ast(\mathcal{C}_{L_1}; z) = 1 + z$. 
    It then follows from \Cref{thm: 1-sum} that 
    \[
    h^\ast(\mathcal{C}_{L_m}; z) = (1 + z)^m.
    \]
\end{example}

\begin{example}
    \label{ex: tree and forest graph}
    Let $P_1$ denote the path of length one, i.e., $P_1$ consists of two different vertices $u \neq v$, and the edge $uv$. It is easy to check that the corresponding cosmological polytope $\C_{P_1}$ has $h^\ast$-polynomial $h^\ast(\C_G;z)=1+3z$. By \Cref{thm: 1-sum}, this directly yields that the cosmological polytope $\C_{T_m}$ of a tree $T_m$ with $m$ edges has $h^\ast$-polynomial
    \begin{equation}\label{eq:tree}
        h^\ast(\C_{T_m};z)=(1+3z)^m.
    \end{equation}
    Evaluating $h^\ast(\C_{T_m};z)$ at $z=1$ gives that the normalized volume of $\C_{T_m}$ is given by $\mathrm{Vol}(\C_{T_m})=4^m$, which generalizes  \cite[Corollary 3.4]{jsl-cosmo} and reproves \cite[Corollary 4.2]{kuhne2022faces}.

If, more generally, $F_m$ is a forest on $m$ edges, then \eqref{eq:tree} and \Cref{prop:union} imply that
    \begin{align*}
        h^\ast(\C_{F_m};z)=(1+3z)^m.
    \end{align*}
   Hence, the $h^\ast$-polynomial of the cosmological polytope of any forest only depends on the number of its edges.
\end{example}

\subsection{Bounds for the \texorpdfstring{$h^\ast$}{h*}-polynomial}
\label{subsec: bounds}
In this section, we show that the $h^\ast$-polynomial $h^\ast(\C_G;z)$ of a graph $G=(V,E)$ with $k$ connected components
can be bounded coefficient-wise from below by a polynomial only depending on $k$, $\vert V \vert$ and $\vert E \vert$.
Given polynomials $f=\sum_{i=0}^na_iz^i$ and $g=\sum_{i=0}^mb_iz^i$, we write $f \preccurlyeq g$ if $f$ is coefficientwise smaller than $g$, i.e., $a_i \le b_i$ for all $i \in \mathbb{N}$. Here we set $a_i=0$ for $i > n$, and $b_i=0$ for $i > m$.

We can now state the main result of this section:

\begin{theorem}
    \label{thm: lower bound}
    Let $G=(V,E)$ be a graph with $k$ connected components. Then the $h^\ast$-polynomial of $\C_G$ satisfies
    \begin{align*}
        (1+3z)^{\vert V \vert-k}(1+z)^{\vert E \vert - \vert V \vert +k} \preccurlyeq h^\ast(\C_G;z).
    \end{align*}
    Equality is attained if and only if $G$ can be obtained as a $1$-sum of loops and a forest. 
\end{theorem}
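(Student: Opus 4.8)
The plan is to reduce the coefficientwise bound to a single one-edge-deletion inequality and to prove that inequality using the regular unimodular triangulations of \Cref{cor: unimodular triangulation} together with the visibility formula of \Cref{thm: visibility formula for h star}. By \Cref{prop:union} both sides are multiplicative over the connected components of $G$ (for the right-hand side because $|V|-k=\sum_i(|V_i|-1)$ and $|E|-|V|+k=\sum_i(|E_i|-|V_i|+1)$, and $\preccurlyeq$ is preserved under multiplication by polynomials with nonnegative coefficients), so I may assume $G$ connected and induct on $|E|$. If every edge of $G$ is a bridge then $G$ is a tree and $h^\ast(\C_G;z)=(1+3z)^{|V|-1}$ is exactly the bound by \Cref{ex: tree and forest graph}. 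If some non-bridge edge $e$ is a loop then $G=L_1\oplus(G-e)$, so $h^\ast(\C_G;z)=(1+z)h^\ast(\C_{G-e};z)$ by \Cref{thm: 1-sum} and \Cref{ex: loop graph}, and since the bound for $G$ is $(1+z)$ times the bound for $G-e$ we are done by induction. The essential case is that $e=ij$ is a non-loop non-bridge edge, where I would prove
\[
h^\ast(\C_G;z)\ \succcurlyeq\ (1+z)\,h^\ast(\C_{G-e};z).
\]
Since $G-e$ is then connected with $|V|$ vertices and $|E|-1$ edges, induction gives $h^\ast(\C_{G-e};z)\succcurlyeq(1+3z)^{|V|-1}(1+z)^{|E|-|V|}$, and multiplying by $1+z$ gives the bound for $G$.

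To prove the displayed inequality I would first fix a good term order $\preceq$ on $R_G$ with the extra property that $y_{ije}$ and $y_{jie}$ are heavy enough that, in every zig-zag binomial and every cyclic pair binomial that involves $e$, the leading term is the one containing a $y$-variable of $e$; such an order exists (take any good order and add a large weight to $y_{ije},y_{jie}$), and for it $z_e$ appears in no leading term of $B_G$ other than $t_ez_e$, while $t_e$ appears in none other than $t_ez_e,\,y_{ije}t_e,\,y_{jie}t_e$. Then $\preceq$ restricts to a good term order on $R_{G-e}$; the lattice points of $\C_{G-e}$ are exactly those of $\C_G$ except $z_e,t_e,y_{ije},y_{jie}$; and $\dim\C_{G-e}=\dim\C_G-1$, so a maximal simplex of $\Tc_G$ has one more vertex than a maximal simplex of $\Tc_{G-e}$. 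From \Cref{cor: unimodular triangulation} one checks that, for every maximal simplex $\conv(S)$ of $\Tc_{G-e}$, both $S\cup\{t_e\}$ and $S\cup\{z_e\}$ index maximal simplices of $\Tc_G$, that these two families are disjoint, and that $\{\,\conv(S\cup\{t_e\})\ :\ \conv(S)\in\Tc_{G-e}\,\}$ is precisely the set of maximal simplices of $\Tc_G$ containing $t_e$. Their union is the pyramid over $\C_{G-e}$ with apex $t_e$, so those simplices form a unimodular triangulation of that pyramid, whose $h^\ast$-polynomial is $h^\ast(\C_{G-e};z)$. Next choose a visibility point $q=\mu\,t_e+(1-\mu)q_0$ with $q_0$ in the relative interior of $\C_{G-e}$ (generic) and $\mu>0$ small (generic), so that $q$ lies in the pyramid and is in general position for both the pyramid and $\C_G$. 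By \Cref{thm: visibility formula for h star}, $h^\ast(\C_{G-e};z)=\sum_{S}z^{|\mathrm{vis}_q(\conv(S\cup\{t_e\}))|}$ and $h^\ast(\C_G;z)=\sum_{\Delta\in\Tc_G}z^{|\mathrm{vis}_q(\Delta)|}$. Since $q$ has negative $e$-coordinate, the facet $\conv(S)$ is invisible from $q$ inside $\conv(S\cup\{t_e\})$ but visible from $q$ inside $\conv(S\cup\{z_e\})$; and since $q_0$ lies on $\mathrm{aff}(\C_{G-e})$ (through which $t_e$ and $z_e$ are reflections of one another) and $\mu$ is small, each of the remaining facets of $\conv(S\cup\{z_e\})$ is visible from $q$ exactly when the corresponding facet of $\conv(S\cup\{t_e\})$ is. Hence $|\mathrm{vis}_q(\conv(S\cup\{z_e\}))|=|\mathrm{vis}_q(\conv(S\cup\{t_e\}))|+1$ for every $S$, and since the two families are disjoint and the remaining maximal simplices of $\Tc_G$ contribute a polynomial with nonnegative coefficients,
\[
h^\ast(\C_G;z)\ \succcurlyeq\ \sum_{S}\Bigl(z^{|\mathrm{vis}_q(\conv(S\cup\{t_e\}))|}+z^{|\mathrm{vis}_q(\conv(S\cup\{z_e\}))|}\Bigr)\ =\ (1+z)\,h^\ast(\C_{G-e};z).
\]
The main obstacle is exactly this step: verifying that the two families behave as stated, and that the small perturbation $\mu\,t_e$ of $q_0$ makes the visibility counts of the $t_e$- and $z_e$-extensions differ by precisely one.

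For the equality statement, if $G$ is a $1$-sum of $a$ loops and a forest with $b$ edges then $h^\ast(\C_G;z)=(1+z)^a(1+3z)^b$ by \Cref{prop:union}, \Cref{thm: 1-sum}, \Cref{ex: loop graph} and \Cref{ex: tree and forest graph}, and one checks $a=|E|-|V|+k$ and $b=|V|-k$, so equality holds. Conversely, the lattice points of $\C_G$ are the unit vectors $\e_v$ of its (non-isolated) vertices, the four points $y_{ijf},y_{jif},t_f,z_f$ for each non-loop edge $f=ij$, and the two points $z_f=y_{iif}$ and $t_f$ for each loop $f=ii$; so, writing $\ell$ for the number of loops of $G$,
\[
h^\ast_1(\C_G)=|\C_G\cap\ZZ^{V\cup E}|-\dim\C_G-1=(|V|+4|E|-2\ell)-(|V|+|E|-1)-1=3|E|-2\ell,
\]
whereas the linear coefficient of $(1+3z)^{|V|-k}(1+z)^{|E|-|V|+k}$ equals $2|V|+|E|-2k$. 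Their difference is $2\bigl((|E|-\ell)-(|V|-k)\bigr)\ge0$, since the loopless subgraph of $G$ has the same number $k$ of connected components and hence at least $|V|-k$ edges, with equality precisely when that subgraph is a forest, i.e.\ precisely when $G$ is a $1$-sum of loops and a forest. Thus if $G$ is not such a $1$-sum then the lower bound is already strict in the linear coefficient, which together with the inequality completes the proof.
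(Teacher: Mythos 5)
Your overall structure mirrors the paper's: reduce to connected $G$ via \Cref{prop:union}, start from a (spanning) tree, and repeatedly prove that adding an edge multiplies a lower bound by $(1+z)$ (with equality exactly for loops). The equality characterization you give, however, is a genuinely different and quite clean argument: rather than tracking strictness through the induction as the paper does, you simply observe that $h^\ast_1(\C_G)=3|E|-2\ell$ (a direct lattice-point count) while the bound has linear coefficient $2|V|+|E|-2k$, and the difference $2\bigl((|E|-\ell)-(|V|-k)\bigr)$ is nonnegative with equality exactly when the loopless subgraph is a forest. This is a nice self-contained way to see the ``only if'' direction.

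Where you diverge substantially from the paper is in the core inequality $h^\ast(\C_G;z)\succcurlyeq(1+z)h^\ast(\C_{G-e};z)$, which is exactly the paper's \Cref{lem: lower bound}. The paper proves it in a few lines: $\C_G$ contains the bipyramid $\conv(V(\widetilde{\C_{G-e}})\cup\{r,s\})$ over the embedded $\widetilde{\C_{G-e}}$ with apices $r=\e_i+\e_j-\e_e$ and $s=\e_i-\e_j+\e_e$ at heights $\mp1$ whose midpoint $\e_i$ lies in $\widetilde{\C_{G-e}}$, so that bipyramid has $h^\ast$-polynomial $(1+z)h^\ast(\C_{G-e};z)$, and Stanley's $h^\ast$-monotonicity gives the inequality (and, since the containment is strict for a non-loop, strictness). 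You instead construct a tailored good term order (weighting $y_{ije},y_{jie}$ heavily), identify the maximal simplices of $\Tc_G$ containing $t_e$ with the two families $\{S\cup\{t_e\}\}$, $\{S\cup\{z_e\}\}$ indexed by $\Tc_{G-e}$, and run a visibility count with a perturbed reference point $q=\mu t_e+(1-\mu)q_0$. As far as I can tell this works --- the facet structure is as you say, the extra maximal simplices of $\Tc_G$ contribute nonnegatively, and the perturbation argument correctly shifts the visibility count by exactly one between the $t_e$- and $z_e$-extensions, since $\mathrm{aff}(F_s)$ and $\mathrm{aff}(F_s')$ both meet $\{w_e=0\}$ in $\mathrm{aff}(S\setminus\{s\})$ --- but it is considerably longer and more delicate than necessary, and you yourself flag the visibility step as the obstacle. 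What it buys you in principle is a triangulation-level refinement (a bijection-level explanation of where the $(1+z)$ comes from) rather than a containment-level one, but for this theorem the paper's bipyramid-plus-monotonicity argument is much more economical and avoids any case analysis about term orders and genericity of $q$. A small point of comparison: you use the apices $t_e,z_e$ (midpoint $\tfrac12(\e_i+\e_j)$), the paper uses $t_e,y_{ije}$ (midpoint $\e_i$); both are fine.
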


The proof of this result will rely on the following lemma:

\begin{lemma}
    \label{lem: lower bound}
    Let $G=(V,E)$ be a graph
    %without isolated vertices
    and let $i,j\in V$. Let $G+f$ be the graph on vertex set $V$ and edge set $F \coloneqq E \cup \{f\}$, where $f \notin E$ is a new (possibly multi)edge between $i$ and $j$. Then
    \begin{align*}
        (1+z) h^\ast(\C_G;z) \preccurlyeq h^\ast(\C_{G +f} ;z).
    \end{align*}
    Moreover, equality holds if and only if $f$ is a loop. 
\end{lemma}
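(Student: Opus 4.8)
The plan is to realize $\C_{G+f}$ as built from $\C_G$ by adjoining the three new vertices $\e_i+\e_j-\e_f$, $\e_i-\e_j+\e_f$, $-\e_i+\e_j+\e_f$ together with the new unit vector $\e_f$, and to exploit a unimodular triangulation to compare $h^\ast$-polynomials via \Cref{thm: visibility formula for h star}. Concretely, fix a good term order $\preceq$ on $R_{G+f}$ whose restriction to the variables of $R_G$ is a good term order on $R_G$; this is possible since the fundamental and cyclic binomials of $G$ form a subset of those of $G+f$ and the underlined/leading-term conventions are compatible. By \Cref{cor: unimodular triangulation} this yields regular unimodular triangulations $\Tc_G$ of $\C_G$ and $\Tc_{G+f}$ of $\C_{G+f}$. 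The key structural point is that every maximal simplex $\Delta$ of $\Tc_G$ extends to at least one maximal simplex of $\Tc_{G+f}$ by adjoining two of the four new lattice points (any admissible completion not containing a leading term of $B_{G+f}$), and that one can choose a single common point $q$ in general position so that the visibility counts are controlled: one completion reproduces the visibility number of $\Delta$ in $\C_G$, and a second completion (obtained by swapping one new point for another, using one of the fundamental binomials $\underline{y_{ijf}z_j}-z_iz_f$, $\underline{y_{jif}z_i}-z_jz_f$, $\underline{t_f z_f}-z_iz_j$, $\underline{y_{ijf}y_{jif}}-z_f^2$ at the edge $f$) contributes exactly one extra visible facet. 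Summing over all $\Delta$ gives $(1+z)\,h^\ast(\C_G;z)\preccurlyeq h^\ast(\C_{G+f};z)$ coefficientwise.

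An alternative, cleaner route — and probably the one I would actually write — is purely algebraic, paralleling the proof of \Cref{thm: 1-sum}. Since $B_G\subseteq B_{G+f}$ as sets of binomials (for compatibly chosen good term orders), there is a natural inclusion $I_{\C_G}\cdot R_{G+f}\subseteq I_{\C_{G+f}}$, and in fact $R_G/I_{\C_G}$ embeds as a subalgebra (an algebra retract, via the projection killing the new variables $z_f, y_{ijf}, y_{jif}, t_f$) of $R_{G+f}/I_{\C_{G+f}}=K[\C_{G+f}]$. Because the retraction is split, the Hilbert series of $K[\C_{G+f}]$ dominates that of $K[\C_G]$ coefficientwise; more precisely, $K[\C_{G+f}]$ is a free — or at least faithfully flat, nonnegatively graded — module over (a suitable Segre-type or polynomial overring of) $K[\C_G]$, and one identifies the ``fiber'' contribution of the new edge with the factor $(1+z)$ coming from the $h^\ast$-polynomial $1+z$ of a single loop (\Cref{ex: loop graph}) resp.\ $1+3z$ of a single non-loop edge $P_1$ (\Cref{ex: tree and forest graph}). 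Since $\dim\C_{G+f}=\dim\C_G+1$, dividing Hilbert series by the appropriate power of $(1-z)$ converts the module-theoretic domination into the claimed coefficientwise inequality on $h^\ast$-polynomials.

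For the equality characterization: if $f$ is a loop at $i$, then $G+f = G\oplus L_1$ is literally a $1$-sum with a $1$-loop, so \Cref{thm: 1-sum} and \Cref{ex: loop graph} give $h^\ast(\C_{G+f};z)=(1+z)\,h^\ast(\C_G;z)$ and equality holds. Conversely, if $f=ij$ with $i\neq j$, I would exhibit extra maximal simplices in $\Tc_{G+f}$ beyond the two-per-$\Delta$ accounted for above — for instance using that all three of $y_{ijf}, y_{jif}, t_f$ can appear in various admissible completions, producing simplices whose visibility count forces a strictly larger linear coefficient $h^\ast_1$. Equivalently and more slickly, restricting to the subgraph $P_1$ spanned by $f$ and comparing with $\C_{P_1}$ (whose $h^\ast$ is $1+3z$, not $1+z$) via a retract argument shows the discrepancy is genuine. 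The main obstacle I anticipate is the bookkeeping in the triangulation approach — namely verifying that a \emph{single} choice of $q$ simultaneously works for all simplices and that the ``+1 visible facet'' claim for the second completion holds uniformly; if that proves delicate, the algebraic module-domination argument sidesteps it entirely, at the cost of carefully justifying flatness/splitness of $K[\C_{G+f}]$ over $K[\C_G]$, which is where I would concentrate the technical effort.
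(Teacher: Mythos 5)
Your proposal misses the paper's short geometric argument and both of your alternatives have concrete gaps.

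The paper's proof (for $i\neq j$) is a two-line geometric observation: take only the two new vertices $r=\e_i+\e_j-\e_f$ and $s=\e_i-\e_j+\e_f$ and note that $P\coloneqq\conv(V(\widetilde{\C_G})\cup\{r,s\})$ is a lattice bipyramid over $\widetilde{\C_G}$ with apices at heights $\pm1$ in the $f$-coordinate whose midpoint $\tfrac12(r+s)=\e_i$ is a lattice point of the base; this gives $h^\ast(P;z)=(1+z)h^\ast(\C_G;z)$, and since $P\subseteq\C_{G+f}$, Stanley's $h^\ast$-monotonicity does the rest, with the strict containment $P\subsetneq\C_{G+f}$ supplying the strict inequality. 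You never consider this route.

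Your algebraic Approach 2 has a fundamental error, not merely a technicality to be smoothed over. The map $R_{G+f}\to R_G$ killing $z_f,y_{ijf},y_{jif},t_f$ does \emph{not} descend to $K[\C_{G+f}]\to K[\C_G]$: the fundamental binomial $y_{ijf}t_f-z_i^2\in I_{\C_{G+f}}$ is sent to $-z_i^2$, which is nonzero in $K[\C_G]$. At the monoid level the same obstruction appears: forgetting the $f$-coordinate sends the generator $\e_i-\e_j+\e_f$ to $\e_i-\e_j$, which is not in the affine monoid of $\C_G$. So while $K[\C_G]\hookrightarrow K[\C_{G+f}]$ is a subalgebra inclusion, it is not (via the map you propose) an algebra retract, and the flatness/freeness you would ``carefully justify'' has no underlying map to be flat. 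The converse (strictness) part of your argument leans on this same nonexistent retract.

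Your triangulation Approach 1 is also off in the details. Since $\dim\C_{G+f}=\dim\C_G+1$, a maximal simplex of $\Tc_{G+f}$ has exactly one more vertex than one of $\Tc_G$, not two; your count ``adjoining two of the four new lattice points'' is wrong. More seriously, when $f$ is not a loop the hyperplane $w_f=0$ is not supporting (since $r_f=-1$ and $s_f=1$), so $\widetilde{\C_G}$ is not a face of $\C_{G+f}$ and there is no reason for $\Tc_{G+f}$ to restrict to $\Tc_G$ or to decompose into ``old simplex plus completion.'' The uniform ``one completion keeps the visibility count, a second adds exactly one'' claim is precisely what would have to be proved, and you supply no argument. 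Your treatment of the loop case ($G+f=G\oplus L_1$, use \Cref{thm: 1-sum} and \Cref{ex: loop graph}) does agree with the paper and is correct; the rest needs the bipyramid idea or a genuinely new argument.
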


\begin{proof}
First assume that $i=j$, i.e., $f$ is a loop. In this case, applying \Cref{thm: 1-sum} and using \Cref{ex: loop graph} directly yields that
$ (1+z) h^\ast(\C_G;z) = h^\ast(\C_{G +f} ;z)$. 

Now assume that $i\neq j$. 
     Let $\widetilde{\C}_G \subseteq \RR^{V\cup F}$ be the canonical embedding of  $\C_G$ on the hyperplane $\mathcal{H}\coloneqq \{w\in \RR^{V\cup F}~:~w_f=0\}$ and let $V(\widetilde{\C_G})$ denote the set of vertices of $\widetilde{\C}_G$. Then,  $\C_{G+f}=\conv(V(\widetilde{\C_G}) \cup \{r,s,t\})$, where
    $r=\e_i+\e_j-\e_{f}$, $s=\e_i-\e_j+\e_{f}$ and  $t=-\e_i+\e_j+\e_{f}$.
     Since $\widetilde{\C_G} \subseteq\mathcal{H}$, and $r_{f}=-1$, and $s_{f}=1$, the polytope $P \coloneqq \conv(V(\widetilde{\C_G}) \cup \{r,s\})$ is a bipyramid over $\widetilde{\C}_G$ with apices $r$ and $s$, at height $-1$ and $1$, respectively.
As, in addition, the  midpoint $\frac{1}{2}(r+s)=\e_i$ of $r$ and $s$ lies in $\widetilde{\C}_G$, it follows (see e.g., \cite{ccd}) that $h^\ast(P;z)=(1+z)h^\ast(\widetilde{\C_G};z)$. Since $\conv(V(\widetilde{\C_G}) \cup \{r,s\}) \subseteq \C_{G+f}$, it follows from $h^\ast$-monotonicity \cite[Theoreom 3.3]{Stanley-monotonicity}, that 
    \begin{align*}
        (1+z)h^\ast(\C_G;z)=(1+z)h^\ast(\widetilde{\C_G};z) \preccurlyeq h^\ast(\C_{G+f};z).
    \end{align*}
    In particular, since the containment $\conv(V(\widetilde{\C_G}) \cup \{r,s\}) \subsetneq \C_{G+f}$ is strict, the above is a strict inequality, at least for some coefficients.
\end{proof}

We now proceed to the proof of \Cref{thm: lower bound}
\begin{proof}[Proof of \Cref{thm: lower bound}]
    First, assume that $G$ is connected. We can construct $G$ by iteratively adding edges to a spanning tree $T=(V,F) \subseteq G$ of $G$. As $T$ is a spanning tree, we have $\vert F\vert =\vert V\vert -1$ and it follows from \Cref{ex: tree and forest graph} that $h^\ast(T;z)=(1+3z)^{\vert V\vert -1}$. Applying \Cref{lem: lower bound} iteratively to the edges in $E\setminus F$ shows the claim.

    If $G$ is disconnected,
    then the previous argument can be applied to every connected component and the statement then follows from \Cref{prop:union}.
\end{proof}

\begin{remark}
    It follows from \Cref{thm: lower bound} that the normalized volume of the cosmological polytope of a graph $G = (V, E)$ with $k$ connected components
    is bounded from below by $4^{|V| - k}2^{|E| - |V| + k}$.
    This gives an exponential (and tight) lower bound on the number of computations needed to compute the wavefunction associated to $G$ according to a physical theory corresponding to a unimodular triangulation of $\C_G$ (i.e., via \Cref{eqn: canform decomp}).
    It appears that a corresponding (tight) upper bound of $4^{|E|}$ also holds, which may be shown by affirming the following conjecture.
\end{remark}

\begin{conjecture}
    \label{conj: upper bound}
    For a graph $G = (V, E)$
    we have
    \[
    h^\ast(\C_G; z) \preceq (1 + 3z)^{|E|}. 
    \]
\end{conjecture}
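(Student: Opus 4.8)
The plan is to reduce the general statement to the $2$-edge-connected case using the graph operations already developed. By \Cref{prop:union} and \Cref{thm: 1-sum}, the $h^\ast$-polynomial is multiplicative over connected components and over $1$-sums, and the bound $(1+3z)^{|E|}$ is also multiplicative over edge-disjoint unions in this sense (since $|E|$ is additive). Hence it suffices to prove the conjecture for graphs $G$ that are not expressible as a nontrivial $1$-sum, i.e., for $2$-connected graphs (together with the single-edge and single-loop base cases $P_1$ and $L_1$, which are handled by \Cref{ex: loop graph} and \Cref{ex: tree and forest graph}). For such $G$ the hope is to exhibit a unimodular triangulation $\mathcal T$ of $\C_G$ together with a choice of interior point $q$ in general position such that, via \Cref{thm: visibility formula for h star}, no maximal simplex $\Delta_j$ has more than $|E|$ visible facets, and moreover the number with exactly $i$ visible facets is at most $\binom{|E|}{i}\cdot$(contribution of the "local" choices). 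Concretely, I would use the Gr\"obner triangulation $\mathcal T_G$ of \Cref{cor: unimodular triangulation}: each maximal simplex corresponds to a set $S$ of $|V|+|E|-1$ lattice points containing none of the leading terms $\mathrm{lt}_\preceq(b)$, $b\in B_G$, and the goal is to bound $|\mathrm{vis}_q(\conv S)|$ in terms of the combinatorial data of the decorated graph $G_S$.

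The cleanest route, I expect, is to argue monotonicity directly rather than combinatorially counting simplices: one would like an "upper bound" companion to \Cref{lem: lower bound}, stating that deleting an edge $f$ from $G$ (when this keeps $G$ a graph without isolated vertices) satisfies $h^\ast(\C_{G};z)\preccurlyeq (1+3z)\, h^\ast(\C_{G-f};z)$. Iterating such an inequality down to a spanning forest — where \Cref{ex: tree and forest graph} gives equality with $(1+3z)^{|E|}$ — would finish the proof exactly as in the proof of \Cref{thm: lower bound}. The difficulty is that $h^\ast$-monotonicity runs the wrong way: $\C_{G-f}$ sits inside a face of $\C_G$, so one gets $h^\ast(\C_{G-f};z)\preccurlyeq h^\ast(\C_G;z)$ for free, but the reverse bound with the extra factor $(1+3z)$ does not follow from containment. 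One would instead need a \emph{relative} count: decompose a unimodular triangulation of $\C_G$ into the simplices "using" the three new vertices $r=\e_i+\e_j-\e_f$, $s=\e_i-\e_j+\e_f$, $t=-\e_i+\e_j+\e_f$ and those not, and show that passing from a half-open decomposition of $\C_{G-f}$ to one of $\C_G$ multiplies the visibility-generating count by at most $1+3z$. In the Gr\"obner picture this should amount to: every maximal simplex $S$ of $\mathcal T_G$ restricts, after forgetting the three points associated to $f$ and the variables $z_f$, to a maximal simplex of $\mathcal T_{G-f}$, and the fiber over a given restricted simplex has size at most $4$ with visibility labels distributed as $\{i, i+1, i+1, i+1\}$ — matching the coefficients of $(1+3z)$.

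The main obstacle is establishing exactly this local structure of the fibers: one must show that for a fixed "$(G-f)$-part" of the lattice-point set, the admissible completions by $f$-data (a choice among $z_f$, $t_f$, $y_{ijf}$, $y_{jif}$, subject to the leading-term constraints coming from the fundamental binomials of $f$, the cycle/cyclic binomials through $f$, and the zig-zag binomials through $f$) number at most four, and that three of the four contribute one additional visible facet relative to the baseline while one contributes none. Verifying the "additional visible facet" claim requires understanding which facet hyperplanes of a maximal simplex $\conv S$ become visible from $q$ when the $f$-data is switched; here one would pick $q$ adapted to the term order (e.g., near a distinguished vertex, as is standard for pulling triangulations) so that visibility of a facet of $\conv S$ corresponds to the presence in $S$ of a lattice point whose omission is "blocked" by a leading term — making the count of visible facets literally a count of certain edges/nodes in the decorated graph $G_S$. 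Carrying this analysis out uniformly for zig-zag and cyclic pairs simultaneously is the delicate part; the multitree and multicycle computations in \Cref{sec: examples} (Theorems~\ref{thm:Ehrhart multitree} and~\ref{thm: h-star of cycle with multiedges}) would serve as the template and as a consistency check, since in those cases one can see directly that the bound $(1+3z)^{|E|}$ holds with the claimed slack.
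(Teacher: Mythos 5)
Note first that the statement in question is \Cref{conj: upper bound}, which the paper does \emph{not} prove; it is left open. There is therefore no proof in the paper to compare against, and your proposal has to stand on its own.

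Your reduction to blocks via \Cref{prop:union} and \Cref{thm: 1-sum} is sound, and you correctly identify that a per-edge inequality $h^\ast(\C_G;z)\preccurlyeq(1+3z)\,h^\ast(\C_{G-f};z)$ would close the argument by the same induction as in the proof of \Cref{thm: lower bound}. You also correctly diagnose why \Cref{lem: lower bound} has no easy upper-bound companion: Stanley monotonicity runs the wrong way (it gives $h^\ast(\C_{G-f};z)\preccurlyeq h^\ast(\C_G;z)$), and the bipyramid trick only produces lower bounds. That diagnosis is the useful content here, but it is also where the proposal stops: the crucial lemma is stated as a hope, not proved, and the ``fiber'' picture you sketch to support it has several problems. (i) The fiber over an $f$-free part is not of size at most $4$; by the analogue of \Cref{lem: induced G_S on one multi edge}, the local $f$-configuration in a facet $S$ of $\mathcal T_G$ is one of \emph{six} possibilities — four single-edge types plus the two double-edge types $\{\overrightarrow{y},z\}$, $\{\overleftarrow{y},z\}$. (ii) The map $S\mapsto S\setminus V(f)$ need not land in $\mathcal T_{G-f}$ at all: deleting $f$ changes the cycles and paths of $G$, hence the zig-zag and cyclic binomials and their leading terms, so the Gr\"obner data for $G-f$ is not simply the Gr\"obner data for $G$ with the $f$-variables erased; moreover when $|S\cap V(f)|=2$ the cardinality of $S\setminus V(f)$ is one short of $\dim\C_{G-f}+1$, so the ``restriction'' is not a facet. (iii) The claim that visible facets of $\conv S$ are counted locally by squiggly/double edges is, for general $G$, exactly \Cref{conj: h*-formula} — itself open — so you would be proving one conjecture from another. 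The consistency check you invoke (e.g.\ $h^\ast(\C_{C_n};z)=(1+3z)^n-(2z)^n\preccurlyeq(1+3z)^n$ from \Cref{thm: h-star of cycle with multiedges}, and the multitree formula of \Cref{thm:Ehrhart multitree}) is reassuring but is not a substitute for the missing lemma. In short: a reasonable plan of attack, but the core inequality remains a gap, and the local-fiber argument offered for it is both incomplete and partly incorrect as stated.
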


Note that by \Cref{ex: tree and forest graph}, the proposed upper bound in \Cref{conj: upper bound} is the $h^\ast$-polynomial of $\C_T$ where $T$ is a tree with the same number of edges as $G$.

\subsection{Degree of the \texorpdfstring{$h^\ast$}{h*}-polynomial}
\label{subsec: degree}
Let $P\subseteq \RR^n$ be a $d$-dimensional lattice polytope. 
The \emph{codegree} of $P$, denoted by $\codeg(P)$, is the smallest positive integer $t$ for which $tP$ contains a lattice point in its relative interior. 
It is well-known that the degree of $h^\ast(P;z)$ is equal to $d + 1- \codeg(P)$ (see, for instance, \citep{ccd}).
In the following, we compute the degree of the $h^\ast$-polynomial of a cosmological polytope. 
To do so, we make use of the following hyperplane description of $\C_G$ obtained by \cite{arkani2017cosmological}.

\begin{theorem}\citep{arkani2017cosmological}
\label{thm: facet description}
Let $G=(V,E)$ be a connected graph. The facets of $\C_G$ are in  bijection with the non-empty connected subgraphs of $G$. More precisely,  given a connected subgraph $H \subseteq G$ the hyperplane
\begin{align*}
   \left\{w\in \RR^{V\cup E}~:~ \sum_{v \in V(H)}w_v + \sum_{f \notin E(H)}\vert f \cap V(H) \vert w_f = 0\right\}
\end{align*}
defines a facet and each facet can be obtained in this way. 
\end{theorem}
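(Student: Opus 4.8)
Since this is \cite{arkani2017cosmological}'s result, one could simply cite it; here I sketch a self-contained argument, assuming $G$ connected with at least one edge (loops need only cosmetic adjustments to the quantity $|f\cap V(H)|$, which I suppress). For an edge $f=ij$ write its three associated vertices as $t_f=\e_i+\e_j-\e_f$, $y_f=\e_i-\e_j+\e_f$, $y_f'=-\e_i+\e_j+\e_f$, and for a nonempty connected subgraph $H\subseteq G$ put $\ell_H(w)=\sum_{v\in V(H)}w_v+\sum_{f\notin E(H)}|f\cap V(H)|\,w_f$.

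First I would record the elementary facts. All $3|E|$ vertices of $\C_G$ lie on $\{\sum_{\alpha\in V\cup E}w_\alpha=1\}$, and the difference directions coming from the three vertices of an edge $ij$ are (up to scaling) $\e_i-\e_f$ and $\e_j-\e_f$; since $G$ is connected these span all of $\{\sum x=0\}$, so $\C_G$ is full-dimensional of dimension $|V|+|E|-1$ inside that affine hyperplane. Then, for a nonempty connected $H$, a short case check over the three vertex types shows $\ell_H\ge 0$ on every vertex of $\C_G$: one gets $\ell_H(t_f)=0$ when $f\notin E(H)$ and $=2$ otherwise, while $\ell_H(y_f)$ equals $2$ exactly when $f\notin E(H)$ and $i\in V(H)$ and is $0$ otherwise (symmetrically for $y_f'$, with $i,j$ swapped). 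Hence $\mathcal H_H:=\{\ell_H=0\}$ is a supporting hyperplane and $F_H:=\C_G\cap\mathcal H_H$ is a face, proper because for any $f\in E(H)$ (or, if $E(H)=\emptyset$, any edge at the single vertex of $H$) some vertex has $\ell_H>0$.

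Next I would show each $F_H$ is a facet, i.e.\ $\dim F_H=|V|+|E|-2$, equivalently that the vertices on $\mathcal H_H$ affinely span $\{\sum_\alpha w_\alpha=1\}\cap\mathcal H_H$. The vertices on $\mathcal H_H$ are: $t_f$ for all $f\notin E(H)$; both $y$-vertices of each $f\in E(H)$; for $f=ij\notin E(H)$ with exactly one endpoint, say $i$, in $V(H)$, also the $y$-vertex $-\e_i+\e_j+\e_f$; and for $f\notin E(H)$ disjoint from $V(H)$, both $y$-vertices. Their difference directions recover $\e_a-\e_b$ for all $a,b\in V(H)$ (using that $H$ is connected) together with enough directions involving the remaining $\e_f$, and a bookkeeping argument parallel to the dimension count shows these span the required $(|V|+|E|-2)$-dimensional space. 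Distinct connected subgraphs give distinct forms $\ell_H$ (their coefficient vectors differ), hence distinct facets, so $\C_G$ has at least as many facets as $G$ has nonempty connected subgraphs.

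The hard part is the converse: every facet of $\C_G$ equals some $F_H$. Given a facet $F$ with vertex set $\mathcal V(F)$, I would read off a candidate $H$ by setting $f\in E(H)$ exactly when $t_f\notin\mathcal V(F)$, and $i\in V(H)$ exactly when $i$ is an endpoint of an edge of $E(H)$ or $\e_i-\e_j+\e_f\notin\mathcal V(F)$ for some $f=ij\notin E(H)$. If one can verify that this $H$ is \emph{consistent} --- that $\ell_H$ vanishes on all of $\mathcal V(F)$, and that $H$ is nonempty and connected --- then $F\subseteq F_H$, and since both are facets, $F=F_H$. The real content is this consistency-and-connectivity check: it amounts to controlling which of the triples $\{t_f,y_f,y_f'\}$ can occur together on a single facet and showing the induced vertex/edge choices propagate coherently along $G$. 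I expect to obtain it either from the facet inequalities already proven (an inequality $\ell_{H'}\ge 0$ tight at a vertex of $F$ forces incidence relations among the coordinates that spread along paths of $G$) or, equivalently, by verifying directly that the inequalities $\ell_H\ge 0$ together with $\sum_\alpha w_\alpha=1$ already cut out $\C_G$ as a bounded polyhedron. This is the only step I anticipate being genuinely delicate; everything else is the linear-algebra bookkeeping outlined above.
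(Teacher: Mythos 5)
The paper does not prove this theorem; it cites it from \cite{arkani2017cosmological}, so there is no internal proof to compare your sketch against. What you have written is therefore judged on its own merits as a self-contained argument, and there it has genuine gaps.

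Your preliminary computations are correct: the difference directions $\e_i - \e_f$, $\e_j - \e_f$ from the three vertices of an edge $ij$ do span $\{\sum w_\alpha = 0\}$ when $G$ is connected, giving $\dim \C_G = |V|+|E|-1$; and the case check showing $\ell_H \geq 0$ on every vertex (with the values $\ell_H(t_f)\in\{0,2\}$ and $\ell_H(y_f)\in\{0,2\}$ you record) is right, so each $\mathcal{H}_H$ is a supporting hyperplane and $F_H$ a proper face. However, the two claims that actually constitute the theorem are both left unproven. First, you assert $\dim F_H = |V|+|E|-2$ via ``a bookkeeping argument parallel to the dimension count'' but never carry it out — and it is not automatic, since the vertices lying on $\mathcal{H}_H$ are a constrained subset, and one must check their difference vectors span the full codimension-one space inside $\mathcal{H}_H \cap \{\sum w_\alpha=1\}$; without this, $F_H$ could a priori be a lower-dimensional face. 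Second, and more seriously, the converse — that every facet of $\C_G$ arises as some $F_H$ for a nonempty connected $H$ — is the real content of the result, and you explicitly flag it as ``the hard part'' and leave it as a plan (``I would read off a candidate $H$ \ldots\ if one can verify \ldots''). The ``read-off'' rule is the right starting point (e.g.\ $t_f \notin F_H$ iff $f\in E(H)$ is consistent with your earlier computation), but the consistency check, the connectedness of the induced $H$, and the nonemptiness are exactly where the work is. As it stands you have shown only that the $F_H$ form a family of proper faces, one per nonempty connected subgraph; neither that they are facets nor that they exhaust the facets has been established.
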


With the help of Theorem~\ref{thm: facet description}, we may prove the following lemma.

\begin{lemma}
\label{contained in proper face}
Let $G=(V,E)$ be a connected, loopless graph with $n$ vertices and let $S\subseteq  \C_G \cap \ZZ^{V \cup E}$ with $\vert S\vert=n-1$. Then there is a proper face $F \subsetneq \C_G$, that contains $S$.
\end{lemma}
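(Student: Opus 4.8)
The plan is to show that any $S\subseteq \C_G\cap\ZZ^{V\cup E}$ with $|S|=n-1$ cannot be "spread out" enough to force the only face containing it to be $\C_G$ itself; since $\C_G$ has dimension $n+|E|-1$, a set of $n-1$ points is far from enough to affinely span it, but I need a face-defining hyperplane, not merely an affine hull argument. So I would use the facet description of Theorem~\ref{thm: facet description}: it suffices to exhibit a nonempty connected subgraph $H\subseteq G$ whose associated facet-defining linear functional $\ell_H(w)=\sum_{v\in V(H)}w_v+\sum_{f\notin E(H)}|f\cap V(H)|\,w_f$ vanishes on every point of $S$. Recall that every lattice point of $\C_G$ is either a unit vector $\e_k$ for $k\in V\cup E$, or a vertex of type $t_f=\e_i+\e_j-\e_f$, $y_{ijf}=\e_i-\e_j+\e_f$, or $y_{jif}=-\e_i+\e_j+\e_f$ for some $f=ij\in E$.

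First I would unwind what $\ell_H(p)=0$ means for each type of lattice point $p$ appearing in $S$. For $p=\e_v$ with $v\in V$: $\ell_H(p)=0$ iff $v\notin V(H)$. For $p=\e_f$ with $f\in E$: $\ell_H(p)=0$ iff either $f\in E(H)$ or $f$ meets $V(H)$ in $0$ vertices — i.e. $f$ has no endpoint in $V(H)$ (here using looplessness so $|f\cap V(H)|\in\{0,1,2\}$ behaves as expected). For $p$ of vertex type on edge $f=ij$: one computes $\ell_H(t_f)=[i\in V(H)]+[j\in V(H)]-|f\cap V(H)|\cdot[f\notin E(H)]$, and similarly for $y_{ijf}$, $y_{jif}$; each of these is a small explicit expression in the indicator data $([i\in V(H)],[j\in V(H)],[f\in E(H)])$ and vanishes for a specific pattern of that data. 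The key structural observation I expect to extract is: $S$ imposes, for each vertex $v$ of $G$, a constraint that is either "$v\notin V(H)$" or "$v\in V(H)$", and for each edge $f$ a constraint that is either "$f\in E(H)$" or "$f\notin E(H)$"; moreover every element of $S$ involves at most two vertices and one edge, so $S$ touches at most $2(n-1)$ vertex-slots and $n-1$ edge-slots — but more to the point, $|S|=n-1 < n=|V|$, so after recording all vertex-constraints imposed by $S$ there is at least one vertex of $G$ left completely unconstrained.

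From here I would build $H$ greedily. Let $W\subseteq V$ be the set of vertices forced into $V(H)$ by the elements of $S$ (from those $p\in S$ whose vanishing condition requires a particular vertex to lie in $V(H)$), together with the requisite forced edges; the complementary forced conditions push other vertices/edges out of $H$. The main case analysis is to check this forced data is consistent, i.e. doesn't simultaneously demand a vertex be in and out, and that the forced-in vertices together with forced-in edges can be completed to a connected subgraph $H$ by adding a path through $G$ (here connectivity of $G$ is used) that avoids the forced-out vertices/edges; the existence of an unconstrained vertex (from $n-1<n$) gives room to do this — in the extreme case where $S$ forces nothing positive, take $H$ to be a single unconstrained vertex, which is connected and nonempty. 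I expect the genuine obstacle to be precisely this consistency-and-completion bookkeeping: one must verify, type by type, that the vanishing conditions coming from $t_f, y_{ijf}, y_{jif}$ never conflict (e.g. a $t_f$ term and a $y_{ijf}$ term on the same edge forcing incompatible membership of $i$, $j$, or $f$), and that when they don't conflict the forced vertex set stays small enough ($\le n-1$ vertices, with room to spare) to be extendable to a connected $H\ne G$ avoiding the forbidden vertices. Once such an $H$ is produced, Theorem~\ref{thm: facet description} gives a facet $F$ with $S\subseteq F\subsetneq\C_G$, proving the lemma.
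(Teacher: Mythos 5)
Your starting point is right: use \Cref{thm: facet description} and hunt for a nonempty connected subgraph $H\subsetneq G$ whose functional $\ell_H$ vanishes on all of $S$. Your computations of $\ell_H(p)=0$ for the various lattice-point types are also the correct first step (e.g.\ $\ell_H(t_f)=0$ iff $f\notin E(H)$; $\ell_H(\e_f)=0$ iff $f\in E(H)$ or both endpoints avoid $V(H)$; etc.). But the combinatorial core of your argument does not work as stated, and the part you wave at as "consistency-and-completion bookkeeping" is in fact the entire proof.

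The specific gap is the pigeonhole claim that ``$|S|=n-1<n$, so \ldots\ there is at least one vertex of $G$ left completely unconstrained.'' A single edge point involves \emph{two} vertices and imposes a \emph{disjunctive} constraint (not a simple ``in'' or ``out'' per vertex), so $n-1$ edge points can easily touch all $n$ vertices. Concretely, let $G$ be the $n$-cycle and take $S=\{\e_{f_1},\dots,\e_{f_{n-1}}\}$ where $f_1,\dots,f_{n-1}$ are $n-1$ consecutive edges. Every vertex appears in some $f_i$, so there is no unconstrained vertex, and your ``take $H$ to be a single unconstrained vertex'' base case is unavailable. (The correct $H$ here is the path $f_1\cdots f_{n-1}$ itself, but nothing in your framework produces it.) You also assert that the forced data ``never conflict,'' but you offer no argument; the disjunctive constraints form a genuine satisfiability problem and one must actually exhibit a solution.

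The paper resolves exactly this by a sharper counting. It considers the auxiliary graph $G[f_1,\dots,f_k]$ on $V$ whose edges are the edges underlying the edge points in $S$, and proves that some connected component of it is a tree $T$ with no vertex of $V(T)$ having $\e_v\in S$ and each edge of $T$ carrying a \emph{unique} edge point of $S$; the proof is by contradiction, noting that any component failing all three conditions accounts for at least as many points of $S$ as it has vertices, forcing $|S|\geq n$. It then takes a carefully chosen subtree $T'\subseteq T$ (traverse only edges whose $S$-point is \emph{not} of type $t_f$) and verifies $\ell_{T'}$ vanishes on all of $S$, using precisely the case computations you began. The tree-component lemma is the key idea missing from your proposal; without it, the argument cannot be completed.
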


\begin{proof}
Let $S=\{\vp_1,\ldots,\vp_{n-1} \}$. 
By \Cref{thm: facet description}, in order to show the claim, it suffices to construct a connected subgraph of $G$ such that the corresponding facet contains all  points in $S$.

In the following, we distinguish two different types of lattice points in $\C_G$: Those, which are of the form $\e_i+\e_j-\e_f$, $\e_i-\e_j+\e_f$, $-\e_i+\e_j+\e_f$ or $\e_f$ for an edge $f=\{i,j\} \in E$, will be referred to as \emph{edge point}; those of the form  $\e_i$ for a vertex $i \in V$ will be referred to as \emph{vertex point}. 
Let $f_1,\ldots, f_k\in E$ be the edges of $G$ corresponding to the edge points in $S$ and 
let $G[f_1,\ldots,f_k]$ be the graph with vertex set $V$ and edge set $\{f_1,\ldots, f_k\}$. We show the following claim:

\medskip

{\sf Claim:} There exists a connected component $T=(V(T),E(T))$ of  $G[f_1,\ldots,f_k]$ that is a tree, such that $\{\e_i~:~i\in V(T)\}\cap  S=\emptyset$ and $S$ contains a  unique edge point for any $f\in E(T)$. Note that $E(T)$ may be empty, i.e., $H$ can consist of an isolated vertex.

If, by contradiction, the claim is false, then every connected component of $G[f_1,\ldots,f_k]$ contains a cycle, or a vertex $i$ with $\e_i \in S$, or an edge with at least two of its edge points belonging to $S$.  In each of these three cases, the connected component corresponds to at least as many lattice points in $S$ as it has vertices. (Note that since each connected component of $G[f_1,\ldots,f_k]$ with $\ell$ vertices contains at least $\ell-1$ edges, it corresponds to at least $\ell-1$ edge points in $S$.) Since every vertex of $V$ belongs to a unique component, it follows that $\vert S\vert \geq n$, which yields a contradiction.

\medskip

Fix a tree $T=(V(T),E(T)) \subseteq G$ as in the above claim. 
Pick any vertex $i' \in V(T)$ and let $T' \subseteq T$ denote the unique subtree of $T$ that contains $i'$ and every vertex $j'$ that can be reached from $i'$ by only using edges $f=ij \in E(T)$ whose corresponding edge points that are contained in $S$ are of the form $\e_f$ or $\e_{i}-\e_{j}+\e_f$ or $-\e_{i}+\e_{j}+\e_f$.
Note that if $\vp \in S$ is an edge point of an edge $f=ij \in E$ such that $\vert V(T')\cap\{i,j\}\vert=1$, then, by construction, this implies that $\vp=\e_{i}+\e_{j}-\e_f$.
By \Cref{thm: facet description}, the tree $T'$ gives rise to a facet ${F}_{T'}$ of $\C_G$ with supporting hyperplane
\begin{align}
    \label{eq: facet equality}
    \left\{w\in \RR^{V\cup E}~:~\sum_{v \in V(T')}w_v + \sum_{f \notin E(T')}\vert f \cap V(T')\vert w_f=0\right\}.
\end{align}

To finish the proof, we show that $S \subseteq F_{T'}$. If $\vp \in S$ is a vertex point $\e_{i}$ for some $i \in V$. Then $i \notin V(T')$ by construction of $T'$ and, hence, $\vp \in {F}_{T'}$. Suppose now,  $\vp\in S$ is an edge point of the edge $f=ij \in E$. If $i,j \notin V(T')$, it is immediate that $\vp \in {F}_{T'}$. If $\vert\{i,j\}\cap V(T')\vert=1$, then $\vp=\e_i+\e_j-\e_f$ as seen above. But this means that $\vp$ satisfies \Cref{eq: facet equality}, and therefore, $\vp \in {F}_{T'}$. Finally, assume that  $\{i,j\}\subseteq  V(T')$. By the construction of $T'$ this implies that $f \in E(T')$. Moreover, we have $\vp=\e_f$ or $\vp=-\e_i+\e_j+\e_f$ or $\vp=\e_i-\e_j+\e_f$ and in all these cases it is immediate that $\vp$ fulfills \Cref{eq: facet equality}, and, hence, $\vp \in {F}_{T'}$.
\end{proof}

Lemma~\ref{contained in proper face}  yields the following theorem.

%---THEOREM: degree connected---
\begin{theorem}
\label{thm: degree}
  For any graph $G=(V,E)$, we have
    \begin{align*}
        \codeg(\C_G)=\vert V\vert \qquad \text{and} \qquad \deg(h^\ast(\C_G;z))=\vert E \vert.
    \end{align*}
\end{theorem}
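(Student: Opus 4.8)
The plan is to prove the two statements simultaneously, since the identity $\deg(h^\ast(\C_G;z)) = \dim\C_G + 1 - \codeg(\C_G)$ together with $\dim\C_G = |V| + |E| - 1$ shows that $\deg(h^\ast(\C_G;z)) = |E|$ is equivalent to $\codeg(\C_G) = |V|$. So it suffices to establish $\codeg(\C_G) = |V|$. I would first reduce to the connected, loopless case. If $G$ has connected components $G_1,\dots,G_k$, then by \Cref{prop:union} we have $h^\ast(\C_G;z) = \prod h^\ast(\C_{G_i};z)$, hence $\deg(h^\ast(\C_G;z)) = \sum \deg(h^\ast(\C_{G_i};z))$; since $|E| = \sum |E_i|$, the claim for $G$ follows from the claim for each component. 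Loops can be stripped off using \Cref{thm: 1-sum}: writing $G$ (connected) as a $1$-sum of a loopless connected graph $G'$ with some number $\ell$ of loops $L_1$, we get $h^\ast(\C_G;z) = h^\ast(\C_{G'};z)\cdot(1+z)^\ell$ by \Cref{thm: 1-sum} and \Cref{ex: loop graph}, so $\deg(h^\ast(\C_G;z)) = \deg(h^\ast(\C_{G'};z)) + \ell$ and $|E(G)| = |E(G')| + \ell$; again the general case reduces to the loopless connected case. (One should double-check that $1$-summing off a loop does not change the vertex count — it does not — and that this handles multiple loops at possibly different vertices by iterating.)

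Now let $G=(V,E)$ be connected and loopless with $n = |V|$ vertices. I need to show $\codeg(\C_G) = n$, i.e., (i) $n\,\C_G$ has a lattice point in its relative interior, and (ii) $(n-1)\C_G$ does not. For (ii): a polytope $P$ of dimension $d$ has a lattice point in the relative interior of $tP$ if and only if there is a set of $d+1$ lattice points of $P$ (with appropriate multiplicities summing to $t$) whose barycentric-type combination lands in the interior — more precisely, $tP \cap \operatorname{relint}$ is nonempty iff $t \geq \codeg(P)$, and the standard criterion is that an interior lattice point of $tP$, when written as a nonnegative integer combination $\sum c_i \vp_i$ of lattice points $\vp_i \in P$ with $\sum c_i = t$, must "use" enough points to not be trapped in a proper face. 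The cleanest route: if $(n-1)\C_G$ had an interior lattice point $q$, write $q = \sum_{\vp \in \C_G \cap \ZZ^{V\cup E}} c_\vp\, \vp$ with $c_\vp \in \ZZ_{\geq 0}$ and $\sum c_\vp = n-1$; let $S$ be the support (the set of $\vp$ with $c_\vp > 0$), so $|S| \leq n-1$. Then $q$ lies in the face of $(n-1)\C_G$ spanned by $S$ — but \Cref{contained in proper face} says that any such $S$ with $|S| \leq n-1$ is contained in a proper face $F \subsetneq \C_G$, so $q \in (n-1)F$, contradicting interiority. This is exactly what \Cref{contained in proper face} was built for.

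For (i), I would construct an explicit interior lattice point of $n\,\C_G$. Fix a spanning tree $T$ of $G$ and a root; the natural candidate is a sum over all $n$ vertices together with contributions from edges, e.g. $q = \sum_{v\in V} \e_v + (\text{correction terms from the }|E|\text{ edges})$, chosen so that every facet inequality from \Cref{thm: facet description} is strict. Concretely, for a connected subgraph $H$, the facet functional is $\ell_H(w) = \sum_{v\in V(H)} w_v + \sum_{f\notin E(H)} |f\cap V(H)|\, w_f$, and I want $\ell_H(q) > 0$ for every nonempty connected $H$ — wait, the facets are at $\ell_H = 0$ and $\C_G$ lies on one side, so I want $q$ strictly on the interior side for all of them. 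Evaluating $\ell_H$ on the vertex $\e_i + \e_j - \e_f$ of an edge $f=ij$ gives values one can tabulate; summing $\e_i + \e_j - \e_f$ over all edges $f=ij\in E$ yields $\sum_v \deg(v)\e_v - \sum_f \e_f$, and one checks $\ell_H$ of this is $\sum_{v\in V(H)}\deg_G(v) - \sum_{f\notin E(H)}|f\cap V(H)|$, which simplifies (counting edge-endpoint incidences) to $2|E(H)| + e(V(H), V\setminus V(H)) - e(V(H), V\setminus V(H)) = 2|E(H)| > 0$ when $E(H)\neq\emptyset$; for $H$ a single vertex one needs an extra $\sum_v \e_v$ term. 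So $q := \sum_{v\in V}\e_v + \sum_{f=ij\in E}(\e_i+\e_j-\e_f)$ is a lattice point with $\ell_H(q) > 0$ for all nonempty connected $H$, hence $q$ lies in the relative interior; and $q$ is the sum of $n + |E|$ lattice points but we need it in $n\C_G$ — here I'd instead just verify directly that $q/n$ has positive barycentric coordinates, i.e. that $q$ is a positive integer combination of lattice points of $\C_G$ summing to $n$: since $\e_v = \frac12(\e_i+\e_j-\e_f) + \frac12(\e_i-\e_j+\e_f)$ type identities let one rebalance, the honest check is that $n\C_G \ni q'$ for a slightly adjusted $q'$, or one cites that $\codeg \leq \dim + 1 - \deg(h^*) $ bound is automatic and only the lower bound $\codeg \geq n$ (which is (ii)) needs the lemma, while $\codeg(\C_G) \leq n$ follows because $\deg h^* \geq 1$ always (as $h^*$ is not constant — the polytope has more than one lattice point) — actually one needs $\deg h^* \geq |E|$, so (i) is genuinely needed. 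I expect (i) — producing the explicit interior point of $n\C_G$ and verifying the incidence count against \emph{every} connected subgraph — to be the main technical obstacle, though it is a finite computation in the facet description of \Cref{thm: facet description}; (ii) is essentially immediate from \Cref{contained in proper face}.
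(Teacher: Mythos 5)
Your overall strategy matches the paper's: reduce to the connected loopless case via \Cref{prop:union}, \Cref{thm: 1-sum} and \Cref{ex: loop graph}, then establish $\codeg(\C_G)=|V|$ by a lower bound via \Cref{contained in proper face} and an upper bound via \Cref{thm: facet description}. However, there are two issues, one minor and one genuine.

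The minor issue is in your argument that $(n-1)\C_G$ has no interior lattice point. You write a lattice point $q\in (n-1)\C_G$ as a nonnegative integer combination $q=\sum c_{\vp}\vp$ of lattice points $\vp\in\C_G$ with $\sum c_{\vp}=n-1$. This decomposition is not automatic for an arbitrary lattice polytope; it is exactly the integer decomposition property (IDP). The paper explicitly secures this by noting that $\C_G$ admits a regular unimodular triangulation (from \cite{jsl-cosmo}) and invoking \cite[Proposition 2.60]{bruns2009gubeladze} to conclude IDP. Without this justification your argument has a gap, though it is easily patched by citing the same facts.

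The genuine gap is in your argument that $n\C_G$ \emph{does} have an interior lattice point (i.e., $\codeg(\C_G)\le n$). Your proposed candidate $q=\sum_{v\in V}\e_v+\sum_{f=ij\in E}(\e_i+\e_j-\e_f)$ is the sum of $n+|E|$ lattice points of $\C_G$, so it lies in $(n+|E|)\C_G$, not $n\C_G$, and you acknowledge you cannot rescue this. But no rescue is needed: simply take $q=\sum_{v\in V}\e_v$. Since each $\e_v$ is a lattice point of $\C_G$, this $q$ lies in $n\C_G$. And for every nonempty connected $H\subseteq G$ the facet functional of \Cref{thm: facet description} evaluates to $\ell_H\!\left(\sum_{v\in V}\e_v\right)=|V(H)|>0$, so $q$ lies on no facet of $n\C_G$ and is therefore in its relative interior. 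This is exactly the (one-line) argument the paper gives. Your extra edge terms $\e_i+\e_j-\e_f$ are both unnecessary and harmful to the dilation count; the whole difficulty you anticipated ("the main technical obstacle") evaporates once the simpler lattice point is used.
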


\begin{proof}
First assume that $G$ is connected and loopless. 
As, by \citep{jsl-cosmo}, the polytope $\C_G$ admits a regular unimodular triangulation, it  follows from  \cite[Proposition 2.60]{bruns2009gubeladze} that $\C_G$ has the integer decomposition property (IDP), meaning that every lattice point $\vp \in t\C_G \cap \ZZ^{V \cup E}$ can be written as
$\vp=\vp_1+\cdots+\vp_t$, with $\vp_i \in \C_G \cap \ZZ^{V \cup E}$ for $i \in [t]$.
For $t \le \vert V \vert-1$, \Cref{contained in proper face} implies there is a proper face $F \subsetneq \C_G$ containing the points $\vp_1,\ldots,\vp_t$, and in particular, we have $\frac{1}{t}\vp \in F$. Thus, $\vp$ is not in the interior of $t\C_G$ and we have $\codeg(\C_G) \ge \vert V \vert$.

Now, consider the point $\vp \coloneqq \sum_{i \in V}\e_i$ which is contained in $\vert V \vert\cdot \C_G$. By \Cref{thm: facet description}, it is immediate that $\vp$ is not contained in any facet of $\vert V \vert \cdot \C_G$, which shows $\codeg(\C_G)=\vert V \vert$.
This directly implies $\deg(h^\ast(\C_G;z))=\dim(\C_G)+1-\codeg(\C_G)=\vert E\vert$.

If $G$ has loops, then applying \Cref{thm: 1-sum} and using \Cref{ex: loop graph} and the statement for connected graphs, shows the claim. 
If $G$ is disconnected, then the statement follows from the connected case together with \Cref{prop:union}. 
\end{proof}

A lattice polytope is called \emph{Gorenstein (of index~$s$)} if its associated toric ring is Gorenstein of index $s$. 
\cite{stanley1978hilbert} proved that a lattice polytope $P$ is Gorenstein of index $s$ if and only if it has codegree $s$ and $h^\ast_i(P) = h^\ast_{d+1 - s - i}(P)$ for all $0\leq i\leq  d+1-s$. 
Since $h^\ast_0(P) = 1$, being Gorenstein of index $s$ implies that  $h^\ast_{d+1 -s}(P) = 1$. 
With the help of this simple observation, we may characterize the Gorenstein cosmological polytopes.

\begin{theorem}
\label{thm: gorenstein}
    The cosmological polytope $\C_G$ of a graph $G$ is Gorenstein, if and only if every edge of $G$ is a loop.
\end{theorem}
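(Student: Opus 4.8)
The plan is to reduce the claim to a statement purely about \emph{palindromicity} of $h^\ast(\C_G;z)$, and then read off the leading coefficient from the lower bound in \Cref{thm: lower bound}.

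First I would record, using \Cref{thm: degree}, that $\codeg(\C_G)=|V|$, that $\deg h^\ast(\C_G;z)=|E|$, and that $\dim\C_G=|V|+|E|-1$. Feeding these into Stanley's criterion recalled just above, $\C_G$ is Gorenstein (necessarily of index $s=\codeg(\C_G)=|V|$) if and only if $h^\ast_i(\C_G)=h^\ast_{|E|-i}(\C_G)$ for all $0\le i\le |E|$; in other words, $\C_G$ is Gorenstein precisely when $h^\ast(\C_G;z)$ is palindromic. So the theorem becomes: $h^\ast(\C_G;z)$ is palindromic if and only if every edge of $G$ is a loop.

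For the forward implication I would note that if every edge of $G$ is a loop then, since $G$ has no isolated vertices, $G$ is a disjoint union of loop graphs; \Cref{prop:union} together with \Cref{ex: loop graph} then gives $h^\ast(\C_G;z)=(1+z)^{|E|}$, which is palindromic, so $\C_G$ is Gorenstein. Conversely, if $\C_G$ is Gorenstein then $h^\ast(\C_G;z)$ is palindromic, and comparing its constant and leading coefficients yields $h^\ast_{|E|}(\C_G)=h^\ast_0(\C_G)=1$. Now let $k$ be the number of connected components of $G$. The left-hand side of the inequality in \Cref{thm: lower bound}, namely $(1+3z)^{|V|-k}(1+z)^{|E|-|V|+k}$, has degree $|E|$ and leading coefficient $3^{|V|-k}$, so coefficientwise domination forces $3^{|V|-k}\le h^\ast_{|E|}(\C_G)=1$, hence $|V|=k$. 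This means every connected component of $G$ consists of a single vertex, and since $G$ has no isolated vertices each such component carries at least one edge, which is then necessarily a loop; thus every edge of $G$ is a loop.

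I do not expect a serious obstacle here: the only point requiring care is matching the Gorenstein index to $\codeg(\C_G)=|V|$ so that the palindromicity window is exactly $\{0,\ldots,|E|\}$ (this is where \Cref{thm: degree} is essential). Once that bookkeeping is in place, \Cref{thm: lower bound} and \Cref{ex: loop graph} immediately supply both directions.
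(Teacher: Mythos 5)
Your proof is correct and takes essentially the same approach as the paper: both directions rest on Theorem~\ref{thm: degree} (to pin down $\codeg(\C_G)=|V|$ and $\deg h^\ast=|E|$), Theorem~\ref{thm: lower bound} (to bound the leading coefficient from below by $3^{|V|-k}$), Stanley's characterization, and Example~\ref{ex: loop graph} with Proposition~\ref{prop:union}. The only cosmetic difference is that you run the necessity direction forward (Gorenstein $\Rightarrow$ $|V|=k$ $\Rightarrow$ all loops) whereas the paper argues by contrapositive (a non-loop edge forces leading coefficient $\ge 3$, contradiction).
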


\begin{proof}
If $G$ is a graph whose edges are all loops, then, using \Cref{ex: loop graph} and \Cref{prop:union}, we have $h^\ast(\C_G;z) = (1+z)^{|E|}$. Thus, $\C_G$ is Gorenstein.

Suppose now that there is at least one edge $f \in E$, that is not a loop. By \Cref{thm: lower bound}, we can bound $h^\ast(\C_G,z)$ from below by the following polynomial of degree $\vert E\vert$
\[
 (1+3z)^{\vert V \vert-k}(1+z)^{\vert E \vert - \vert V \vert +k} .
\]
By \Cref{thm: degree}, $\deg(h^\ast(\C_G;z))=\vert E\vert$, which implies that the leading coefficient of $h^\ast(\C_G;z)$ is at least $3$, which already contradicts that $\C_G$ is Gorenstein as mentioned above. 
\end{proof}

\section{\texorpdfstring{$h^\ast$}{h*}-polynomials of multicycles and multitrees}
\label{sec: examples}

We now provide explicit formulas for the $h^\ast$-polynomials of cosmological polytopes $\C_G$, when the underlying graph $G$ is a tree or a cycle with possibly multiple edges. 
As a consequence, we obtain generalizations of the normalized volume formulas for simple trees and cycles obtained in \citep{kuhne2022faces} and \citep{jsl-cosmo}, respectively. 
Our results rely on the identification of half-open decompositions of certain unimodular triangulations extracted from the Gr\"obner bases of $I_{\C_G}$ of \cite{jsl-cosmo}. 
We refer the reader to \Cref{sec:ehrhart} and \Cref{sec:unimodular} for relevant preliminaries. 

In the following, given a graph $G=(V,E)$ and vertices $i$ and $j$ in $V$, the multiset of \emph{all} edges with endpoints $i$ and $j$ will be called a \emph{multi-edge}. 
The cardinality of this multiset will be referred to as the \emph{multiplicity} of the multi-edge.

\subsection{Multicycles}
\label{subsec: cycles}
Given an integer $n\geq 3$ and $\mathbf{a}=(a_1,\ldots,a_n)\in \mathbb{N}^n$ with $a_i\geq 1$ for all $1\leq i\leq n$, we let  $C_{a_1,\ldots,a_n}$  (or $C_{\mathbf{a}}$ for short) denote the graph on vertex set $[n]$ and whose edge set $E(C_\mathbf{a})$ consists of a multi-edge of multiplicity $a_i$ between node $i$ and $i+1$  for every $1\leq i\leq n$. Here and in the following, we consider $i$ $\mathrm{mod}$ $n$. Note that the graph $C_{(1,\ldots,1)}$ is just the usual $n$-cycle.

\begin{figure}
\begin{center}
\begin{tikzpicture}[scale=2]
    \foreach \i in {0,...,7} {
        \node[] (\i) at (\i*360/8:1) {};
        \draw[fill] (\i) circle (1.5pt);
    }
    \node[above] at (2) {\small$1$};
    \node[above left] at (3) {\small$n$};
    \node[left] at (4) {\small$n-1$};
    \node[below left] at (5) {\small$6$};
    \node[below] at (6) {\small$5$};
    \node[below right] at (7) {\small$4$};
    \node[right] at (0) {\small$3$};
    \node[above right] at (1) {\small$2$};

    %Multiedge between node 1 and 2
    \draw (2) to (1);
    \draw (2) to [bend left] (1);
    \draw (2) to [bend left=60] (1);
    \draw (2) to [bend right=60] (1);
    \node (e1a1) at (67.5:0.5) {\tiny$e_{a_1}^{(1)}$};
    \node[rotate=67.5] (dots) at (67.5:0.82) {\tiny$\dots$};
    \node (e11) at (67.5:1.3) {\tiny$e_1^{(1)}$};

    %Multiedge between node 2 and 3
    \draw (1) to (0);
    \draw (1) to [bend left] (0);
    \draw (1) to [bend left=60] (0);
    \draw (1) to [bend right=60] (0);
    \node (e2a2) at (22.5:0.5) {\tiny$e_{a_2}^{(2)}$};
    \node[rotate=22.5] (dots) at (22.5:0.82) {\tiny$\dots$};
    \node (e21) at (22.5:1.3) {\tiny$e_1^{(2)}$};

    %Multiedge between node 3 and 4
    \draw (0) to (7);
    \draw (0) to [bend left] (7);
    \draw (0) to [bend left=60] (7);
    \draw (0) to [bend right=60] (7);
    \node (e3a3) at (-22.5:0.5) {\tiny$e_{a_3}^{(3)}$};
    \node[rotate=-22.5] (dots) at (-22.5:0.82) {\tiny$\dots$};
    \node (e31) at (-22.5:1.3) {\tiny$e_1^{(3)}$};

    %Multiedge between node 4 and 5
    \draw (7) to (6);
    \draw (7) to [bend left] (6);
    \draw (7) to [bend left=60] (6);
    \draw (7) to [bend right=60] (6);
    \node (e4a4) at (-67.5:0.5) {\tiny$e_{a_4}^{(4)}$};
    \node[rotate=-67.5] (dots) at (-67.5:0.82) {\tiny$\dots$};
    \node (e41) at (-67.5:1.3) {\tiny$e_1^{(4)}$};

    %Multiedge between node 5 and 6
    \draw (6) to (5);
    \draw (6) to [bend left] (5);
    \draw (6) to [bend left=60] (5);
    \draw (6) to [bend right=60] (5);
    \node (e5a5) at (-112.5:0.5) {\tiny$e_{a_5}^{(5)}$};
    \node[rotate=-112.5] (dots) at (-112.5:0.82) {\tiny$\dots$};
    \node (e51) at (-112.5:1.3) {\tiny$e_1^{(5)}$};

    %Multiedge between node 6 and n-1
    \node[rotate=112.5] (dots) at (202.5:0.95) {$\dots$};

    %Multiedge between node n-1 and n
    \draw (4) to (3);
    \draw (4) to [bend left] (3);
    \draw (4) to [bend left=60] (3);
    \draw (4) to [bend right=60] (3);
    \node (en-1an-1) at (157.5:0.5) {\tiny$e_{a_{n-1}}^{(n-1)}$};
    \node[rotate=157.5] (dots) at (157.5:0.82) {\tiny$\dots$};
    \node (en-11) at (157.5:1.3) {\tiny$e_1^{(n-1)}$};

    %Multiedge between node n and 1
    \draw (3) to (2);
    \draw (3) to [bend left] (2);
    \draw (3) to [bend left=60] (2);
    \draw (3) to [bend right=60] (2);
    \node (enan) at (112.5:0.5) {\tiny$e_{a_n}^{(n)}$};
    \node[rotate=112.5] (dots) at (112.5:0.82) {\tiny$\dots$};
    \node (en1) at (112.5:1.3) {\tiny$e_1^{(n)}$};
\end{tikzpicture}  
\caption{A multicycle} \label{fig:multicycle}
\end{center}
\end{figure}
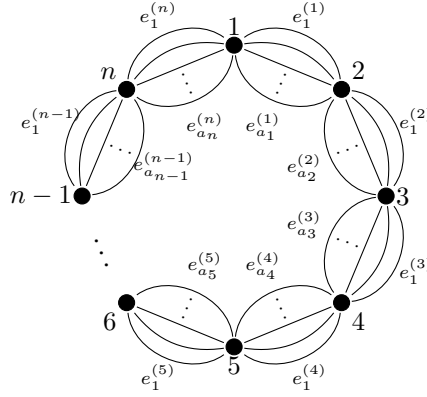

For $1\leq i\leq n$ and $1\leq j\leq a_i$ we use $e_j^{(i)}$ to denote the $j$-th edge between $i$ and $i+1$. In all our figures, we will put the vertices of $C_{\mathbf{a}}$ in clockwise order and label the edges $e_j^{(i)}$ from external to internal by increasing lower index (see \Cref{fig:multicycle}). If we just draw part of a cycle, the labeling is always from top to bottom by increasing lower index. We will also refer to directed edges from $i$ to $i+1$ and from  $i+1$ to $i$, respectively, as \emph{clockwise} and \emph{counter-clockwise} oriented edges, respectively. 

In order to be able to use \Cref{thm: visibility formula for h star} we first need to compute an explicit unimodular triangulation of the cosmological polytope of $C_{\mathbf{a}}$. Following \cite{jsl-cosmo} (see \Cref{thm:GB} and \Cref{cor: unimodular triangulation}) this requires the specification of a  \emph{good} term order $<$ on  $R_{\mathbf{a}} \coloneqq R_{C_{\mathbf{a}}}$. The obtained triangulation will generalize \cite[Theorem 4.1]{jsl-cosmo} that treats the case of a simple cycle. To keep the notation as simple as possible, given an edge $e_j^{(i)}$ ($1\leq i\leq n$ and $1\leq j\leq a_i$), we write
\begin{itemize}
    \item[$\bullet$] $\overrightarrow{y}_{j}^{(i)}$ for the variable $y_{i,i+1,e_j^{(i)}}$,
    \item[$\bullet$] $\overleftarrow{y}_{j}^{(i)}$ for the variable $y_{i+1,i,e_j^{(i)}}$,
    \item[$\bullet$] $t_{j}^{(i)}$ for the variable $t_{e_j^{(i)}}$
    \item[$\bullet$] $z_{j}^{(i)}$ for the variable $z_{e_j^{(i)}}$ and, as usual, $z_i$.
\end{itemize}
We write $V(e_j^{(i)})$ for the set of variables associated to $e_j^{(i)}$, i.e., $V(e_j^{(i)}) \coloneqq \{\overrightarrow{y}_{j}^{(i)},\overleftarrow{y}_{j}^{(i)},t_{j}^{(i)},z_{j}^{(i)}\}$.

We order the variables of $R_{\mathbf{a}}$ as follows:
\begin{align*} 
&\overrightarrow{y}_{1}^{(1)} & > & \overrightarrow{y}_{2}^{(1)} & > & \cdots & > & \overrightarrow{y}_{a_1}^{(1)} & > & \overrightarrow{y}_{1}^{(2)} & > & \cdots & > & \overrightarrow{y}_{a_2}^{(2)} & > & \cdots & > &\overrightarrow{y}_{1}^{(n)} & > & \cdots & > & \overrightarrow{y}_{a_n}^{(n)}\\
   > &\overleftarrow{y}_{a_n}^{(n)} & > & \overleftarrow{y}_{a_n-1}^{(n)} & > & \cdots & > & \overleftarrow{y}_{1}^{(n)} & > & \overleftarrow{y}_{a_{n-1}}^{(n-1)} & > & \cdots & > & \overleftarrow{y}_{1}^{(n-1)} & > & \cdots & > & \overleftarrow{y}_{a_1}^{(1)} & > & \cdots & > & \overleftarrow{y}_{1}^{(1)} \\
   >&z_{1}^{(1)} & > & z_{2}^{(1)} & > & \cdots & > & z_{a_1}^{(1)} & > & z_{1}^{(2)} & > & \cdots & > & z_{a_2}^{(2)} & > & \cdots & > &z_{1}^{(n)} & > & \cdots & > & z_{a_n}^{(n)} \\
   >&t_{1}^{(1)} & > & t_{2}^{(1)} & > & \cdots & > & t_{a_1}^{(1)} & > & t_{1}^{(2)} & > & \cdots & > & t_{a_2}^{(2)} & > & \cdots & > &t_{1}^{(n)} & > & \cdots & > & t_{a_n}^{(n)} \\
   > & z_1 & > & \cdots & > & z_n, &&&&&&&&&&&&&&&
\end{align*}
and then use the lexicographic term order with respect to the above ordering on $R$. Recall that given monomials $f,g \in R$, we have $f < g$ in lexicographic order if and only if the exponent of the greatest variable that has a different exponent in $f$ than in $g$, is greater in $g$ than in $f$. It is immediate that $<$ is indeed a good term order. We let $\mathcal{T}_{\mathbf{a}}$ be the triangulation from \Cref{cor: unimodular triangulation} with respect to this term order.  In the following, we list the obstructions for a set $S$ to be a facet of $\mathcal{T}_{\mathbf{a}}$  where we use the graphical representations introduced in \Cref{sec:unimodular}. To avoid confusion between different edges of a multi-edge and different types of edges for the same edge, we will draw a dotted frame if the \emph{same} edge occurs in different types. 

\noindent \textit{Fundamental obstructions:} The leading terms of the fundamental binomials correspond to the following forbidden subgraphs for each edge:
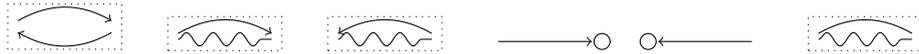
\begin{figure}[h]
    \centering
    \begin{tabular}{cccccc}
    \begin{tikzpicture}
            \node[] (0) at (0, 0) {};
            \node[] (1) at (1.5, 0) {};
            \draw[->] (0) to [bend left] (1);
            \draw[<-] (0) to [bend right] (1);
            \draw[dotted] (0,-0.3) rectangle (1.5,0.3);
        \end{tikzpicture} & 
        \begin{tikzpicture}
            \node[] (0) at (0, 0) {};
            \node[] (1) at (1.5, 0) {};
            \draw[->] (0) to [bend left] (1);
            \draw[squiggle] (0) to (1);
            \draw[dotted] (0,-0.15) rectangle (1.5,0.3);
        \end{tikzpicture} &
        \begin{tikzpicture}
            \node[] (0) at (0, 0) {};
            \node[] (1) at (1.5, 0) {};
            \draw[<-] (0) to [bend left] (1);
            \draw[squiggle] (0) to (1);
            \draw[dotted] (0,-0.15) rectangle (1.5,0.3);
        \end{tikzpicture} &
        \begin{tikzpicture}
            \node[] (0) at (0, 0) {};
            \node[] (1) at (1.5, 0) {};
            \draw[white] (1) circle (3pt);
            \draw[->] (0) to (1);
        \end{tikzpicture} &
        \begin{tikzpicture}
            \node[] (0) at (0, 0) {};
            \draw[white] (0) circle (3pt);
            \node[] (1) at (1.5, 0) {};
            \draw[<-] (0) to (1);
        \end{tikzpicture} &
        \begin{tikzpicture}
            \node[] (0) at (0, 0) {};
            \node[] (1) at (1.5, 0) {};
            \draw (0) to [bend left] (1);
            \draw[squiggle] (0) to (1);
            \draw[dotted] (0,-0.15) rectangle (1.5,0.3);
        \end{tikzpicture}
\end{tabular}
    \caption{Fundamental obstructions}
    \label{fig: fundamental obstructions}
\end{figure}

In the above figures, the same edge is depicted and not a multi-edge of $C_\textbf{a}$.

\noindent\textit{Zig-zag binomials:} Leading terms of zig-zag binomials correspond to paths of length at least two, consisting of undirected and clockwise directed edges only, where the last node in clockwise direction is white, the last edge in clockwise direction is undirected and the first edge in clockwise direction is directed clockwise. Such  a subgraph will be called \emph{partially directed path in clockwise direction}. An example for such a path is the following
\begin{center}
    \begin{tikzpicture}
        \foreach \i in {0,...,8} {
        \node[] (\i) at (\i*1.5,0) {};
        \draw[fill] (\i) circle (3pt);
        }
        \node[] (9) at (9*1.5,0) {};
        \draw[white] (9) circle (3pt);
        \draw[->] (0) to (1) {};
        \draw[->] (1) to (2) {};
        \draw[] (2) to (3) {};
        \draw (3) to (4) {};
        \draw[->] (4) to (5) {};
        \draw (5) to (6) {};
        \draw[->] (6) to (7) {};
        \draw (7) to (8) {};
        \draw (8) to (9) {};
        \foreach \i in {1,...,9} {
        \node[] (\i) at (\i*1.5,-0.5) {{\tiny $i+\i$}};
        }
        \node[] (0) at (0,-0.5) {$i$};
    \end{tikzpicture}
\end{center}

\noindent \textit{Cyclic binomials:} There are two different types of cycles in $C_{\mathbf{a}}$:  Cycles of length $2$, consisting of two edges of a multi-edge, and cycles of length $n$, containing exactly one edge per  multi-edge of $C_{\mathbf{a}}$. The leading terms of the first type correspond to the four forbidden subgraphs:

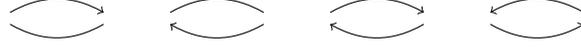
\begin{figure}[h]
\begin{center}
\begin{tabular}{cccc}
    \begin{tikzpicture}
            \node[] (0) at (0, 0) {};
            \node[] (1) at (1.5, 0) {};
            \draw[->] (0) to [bend left] (1);
            \draw (0) to [bend right] (1);
            \node[] (0) at (0,-0.5) {};
            \node[] (1) at (1.5,-0.5) {};
        \end{tikzpicture} & 
        \begin{tikzpicture}
            \node[] (0) at (0, 0) {};
            \node[] (1) at (1.5, 0) {};
            \draw (0) to [bend left] (1);
            \draw[<-] (0) [bend right] to (1);
            \node[] (0) at (0,-0.5) {};
            \node[] (1) at (1.5,-0.5) {};
        \end{tikzpicture} &
        \begin{tikzpicture}
            \node[] (0) at (0, 0) {};
            \node[] (1) at (1.5, 0) {};
            \draw[->] (0) to [bend left] (1);
            \draw[<-] (0) [bend right] to (1);
            \node[] (0) at (0,-0.5) {};
            \node[] (1) at (1.5,-0.5) {};
        \end{tikzpicture} &
        \begin{tikzpicture}
            \node[] (0) at (0, 0) {};
            \node[] (1) at (1.5, 0) {};
            \draw[<-] (0) to [bend left] (1);
            \draw[->] (0) [bend right] to (1);
            \node[] (0) at (0,-0.5) {};
            \node[] (1) at (1.5,-0.5) {};
        \end{tikzpicture} 
\end{tabular}
\end{center}
\caption{Cyclic obstructions}
    \label{fig:cyclic}
\end{figure}
In the above figures, two edges of the same multi-edge are depicted with the one with smaller index on top. The forbidden subgraphs coming from leading terms of cycles of length $n$ consist of either a clockwise directed or an undirected edge for every multi-edge of $C_{\mathbf{a}}$ with at least one directed edge. Additionally, we have that the cycles of length $n$ consisting of exactly one counterclockwise directed edge for every multi-edge of $C_{\mathbf{a}}$ are forbidden. Subgraphs  with at least one undirected edge are called \emph{partially directed cycles in clockwise direction} and subgraphs  with only directed edges are called \emph{directed cycles in (counter)clockwise direction} depending on the direction of the appearing edges. 

For characterizing the subsets of the generators of $R_{\mathbf{a}}$ that define facets of $\Ta$, we first characterize how facets look locally when restricted to a certain multi-edge.
\begin{lemma}
    \label{lem: induced G_S on one multi edge}
    Let $S\subseteq R_{\mathbf{a}}$ be a facet of $\Ta$ and $1\leq i \leq n$. Then one of the following cases occurs:
    \begin{enumerate}
    \item[(A)] There exists a unique $1\leq j\leq a_i$ such that $S \cap V(e_j^{(i)})=\{\overrightarrow{y}_{j}^{(i)},z_{j}^{(i)}\}$ and for $k \in [a_i]\setminus\{j\}$, we have $\vert S \cap  V(e_k^{(i)})\vert=1$. More precisely,
    \begin{align*}
            S \cap V(e_k^{(i)}) \subseteq 
            \begin{cases}
                \{t_{k}^{(i)},z_{k}^{(i)}\}, &\text{ if } k < j, \\
                \{t_{k}^{(i)},\overrightarrow{y}_{k}^{(i)}\}, &\text{ if } k > j. 
            \end{cases}
            \end{align*}
     \item[(B)] There exists a unique $1\leq j\leq a_i$ such that $S \cap V(e_j^{(i)})=\{\overleftarrow{y}_{j}^{(i)},z_{j}^{(i)}\}$ and for $k \in [a_i]\setminus\{j\}$, we have $\vert S \cap  V(e_k^{(i)})\vert=1$. More precisely,
    \begin{align*}
            S \cap V(e^{(i)}_k) \subseteq %\{\overrightarrow{y}_{k}^{(i)},\overleftarrow{y}_{k}^{(i)},t_{k}^{(i)},z_{k}^{(i)}\} \subseteq 
            \begin{cases}
                \{t_{k}^{(i)},\overleftarrow{y}_{k}^{(i)}\}, &\text{ if } k < j, \\
                \{t_{k}^{(i)},z_{k}^{(i)}\}, &\text{ if } k > j.
            \end{cases}
            \end{align*}
            \item[(C)] $\vert S \cap V(e_j^{(i)})\vert=1$ for all $1\leq j\leq a_i$. 
     \end{enumerate}
    
   \end{lemma}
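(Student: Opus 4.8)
The plan is to read the trichotomy directly off the list of leading terms of the Gr\"obner basis. By \Cref{cor: unimodular triangulation}, a facet $S$ of $\Ta$ contains $\mathrm{lt}_{<}(b)$ for no $b\in B_{C_{\mathbf{a}}}$, and $\conv(S)$ is a full-dimensional simplex of $\Ta$. Among those leading terms, only two families involve exclusively the four variables of $V(e_j^{(i)})$ for edges $e_j^{(i)}$ of the multi-edge between $i$ and $i+1$: the single-edge fundamental obstructions of \Cref{fig: fundamental obstructions}, and the length-$2$ cyclic obstructions of \Cref{fig:cyclic} on pairs $e_j^{(i)},e_k^{(i)}$. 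Every other leading term in $B_{C_{\mathbf{a}}}$ mentions edges of at least two distinct multi-edges, hence imposes no condition relevant here, so the proof reduces to a finite check against these two figures together with one appeal to full-dimensionality.

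First I would treat a single edge. By the fundamental obstructions, the only two-element subsets of $V(e_j^{(i)})=\{\overrightarrow{y}_j^{(i)},\overleftarrow{y}_j^{(i)},t_j^{(i)},z_j^{(i)}\}$ that $S$ may contain are $\{\overrightarrow{y}_j^{(i)},z_j^{(i)}\}$ and $\{\overleftarrow{y}_j^{(i)},z_j^{(i)}\}$; hence $\vert S\cap V(e_j^{(i)})\vert\le 2$, and equality forces $S\cap V(e_j^{(i)})$ to be one of these two pairs. Next I would show at most one $j\in[a_i]$ has $\vert S\cap V(e_j^{(i)})\vert=2$: if $j<k$ both did, then $z_j^{(i)},z_k^{(i)}\in S$ and $S$ chooses one of $\overrightarrow{y}_j^{(i)},\overleftarrow{y}_j^{(i)}$ and one of $\overrightarrow{y}_k^{(i)},\overleftarrow{y}_k^{(i)}$; the four combinations force into $S$, respectively, $\{\overrightarrow{y}_j^{(i)},z_k^{(i)}\}$ (in the two cases with $\overrightarrow{y}_j^{(i)}$ at $e_j^{(i)}$), $\{\overleftarrow{y}_j^{(i)},\overrightarrow{y}_k^{(i)}\}$, or $\{z_j^{(i)},\overleftarrow{y}_k^{(i)}\}$, and each of these (with the smaller index drawn on top) is one of the four forbidden configurations of \Cref{fig:cyclic}, a contradiction.

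Assuming now a unique ``large'' edge $e_j^{(i)}$ with $S\cap V(e_j^{(i)})=\{\overrightarrow{y}_j^{(i)},z_j^{(i)}\}$ (the option $\{\overleftarrow{y}_j^{(i)},z_j^{(i)}\}$ being symmetric and yielding (B)), I would pin down $S\cap V(e_k^{(i)})$ for $k\neq j$ using \Cref{fig:cyclic}. For $k<j$, the pairs $\{\overrightarrow{y}_k^{(i)},z_j^{(i)}\}$ and $\{\overleftarrow{y}_k^{(i)},\overrightarrow{y}_j^{(i)}\}$ are forbidden, whence $\overrightarrow{y}_k^{(i)},\overleftarrow{y}_k^{(i)}\notin S$ and $S\cap V(e_k^{(i)})\subseteq\{t_k^{(i)},z_k^{(i)}\}$. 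For $k>j$, the pairs $\{\overrightarrow{y}_j^{(i)},z_k^{(i)}\}$ and $\{\overrightarrow{y}_j^{(i)},\overleftarrow{y}_k^{(i)}\}$ are forbidden, whence $z_k^{(i)},\overleftarrow{y}_k^{(i)}\notin S$ and $S\cap V(e_k^{(i)})\subseteq\{t_k^{(i)},\overrightarrow{y}_k^{(i)}\}$. These are exactly the inclusions in (A).

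It then remains to exclude $S\cap V(e_k^{(i)})=\emptyset$. Here I would argue that if this occurred, every lattice point of $S$ would have a $0$ in the coordinate indexed by the edge $e_k^{(i)}$, so $\conv(S)$ would lie in the hyperplane $\{x_{e_k^{(i)}}=0\}$, which is impossible for a full-dimensional simplex of $\Ta$ (for instance $\e_{e_k^{(i)}}\in\C_{C_{\mathbf{a}}}$ has that coordinate equal to $1$). Thus $1\le\vert S\cap V(e_k^{(i)})\vert\le 2$ for every $k$: if some edge is large then it is unique by the second step, and the remaining edges, being not large, satisfy $\vert S\cap V(e_k^{(i)})\vert=1$ and lie in the sets of the third step, which is case (A) or (B); and if no edge is large then every edge satisfies $\vert S\cap V(e_k^{(i)})\vert=1$, which is case (C). Since these three situations are visibly mutually exclusive, the lemma follows. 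I expect the only conceptual point to be this last full-dimensionality argument; the remainder is a mechanical but error-prone verification, the main danger being to match a forbidden pair with the wrong picture or orientation in \Cref{fig: fundamental obstructions} and \Cref{fig:cyclic}.
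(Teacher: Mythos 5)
Your proof is correct and follows essentially the same route as the paper's: both reduce the lemma to checking the fundamental obstructions on a single edge and the length-two cyclic obstructions on pairs of parallel edges. The only substantive difference is that the paper simply cites Proposition 2.12 and Lemma 2.11 of \citep{jsl-cosmo} for the facts that $1 \le \vert S \cap V(e_j^{(i)})\vert \le 2$ and that the two-element intersections must be $\{\overrightarrow{y}_j^{(i)},z_j^{(i)}\}$ or $\{\overleftarrow{y}_j^{(i)},z_j^{(i)}\}$, whereas you re-derive the upper bound directly from the edge-internal fundamental obstructions and the lower bound from full-dimensionality of the maximal simplices (which is the right argument; just note that \Cref{cor: unimodular triangulation} as printed says $|S|=|V|+|E|-1$, but the paper's own later use makes clear the intended cardinality is $|V|+|E|$, i.e., $\dim\C_G + 1$). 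You also reorder the argument slightly — proving uniqueness of the large edge first, then the local inclusions — while the paper derives the inclusions first and extracts uniqueness from them; the two orderings are logically interchangeable. Your case analysis against \Cref{fig:cyclic} matches the leading terms of the two-cycle cyclic binomials correctly.
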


\begin{proof}
   It was already shown in Proposition 2.12 and Lemma 2.11 of \cite{jsl-cosmo} that $1 \le \vert S \cap V(e_j^{(i)})\vert \leq 2$ and if $\vert S \cap V(e_j^{(i)})\vert =2$, then $S \cap V(e_j^{(i)})$ is one of $\{\overrightarrow{y}_{j}^{(i)},z_{j}^{(i)}\}$ and $\{\overleftarrow{y}_{j}^{(i)},z_{j}^{(i)}\}$.

   Suppose we are not in situation (C), i.e., $\vert S \cap V(e_j^{(i)})\vert =2$ for some $1\leq j\leq a_i$. If $S \cap V(e_j^{(i)})=\{\overrightarrow{y}_{j}^{(i)},z_{j}^{(i)}\}$, then it directly follows that 
   $S \cap  V(e_k^{(i)})\subseteq \{t_{k}^{(i)},z_{k}^{(i)}\}$ if $k<j$ and  $S \cap  V(e_k^{(i)})\subseteq  \{t_{k}^{(i)},\overrightarrow{y}_{k}^{(i)}\}$ if $k>j$ since otherwise $S$ contains one of the obstructions in \Cref{fig:cyclic}. But then, by the first part of the proof, we must have $\vert S \cap  V(e_k^{(i)})\vert=1$ for all $k\in [a_i]\setminus \{j\}$. This is situation (A). If $S \cap V(e_j^{(i)})=\{\overleftarrow{y}_{j}^{(i)},z_{j}^{(i)}\}$, then a similar argument shows that we are in situation (B).
\end{proof}

 The above lemma says that if $S$ is a facet of $\Ta$, then each edge $e_j^{(i)}$ of $C_{\mathbf{a}}$ appears in $G_S\coloneqq (C_{\mathbf{a}})_S$ at least once and at most twice. In the latter case, $G_S$ can, locally, only look as follows
\begin{center}
    \begin{tabular}{ccc}
         \begin{tikzpicture}
            \node[] (0) at (0, 0) {};
            \node[] (1) at (1.5, 0) {};
            \node[] (a) at (0.75,0.75) {,};
            \node[] (b) at (0.75,-0.75) {,};
            \node[] (2) at (-0.6,0.75) {};
            \node[] (3) at (0.65,0.75) {};
            \node[] (4) at (0.85,0.75) {};
            \node[] (5) at (2.1,0.75) {};
            \node[] (6) at (-0.6,-0.75) {};
            \node[] (7) at (0.65,-0.75) {};
            \node[] (8) at (0.85,-0.75) {};
            \node[] (9) at (2.1,-0.75) {};
            
            \draw[squiggle] (2) to (3);
            \draw (4) to (5);
            \draw[->] (0) to [bend left] (1);
            \draw (0) to (1);
            \draw[squiggle] (6) to (7);
            \draw[->] (8) to (9);
            
            \node at (0.75, -1.5) {$A$};
            \node (i) at (-0.25,0) {\tiny{$i$}};
            \node (i+1) at (2,0) {\tiny{$i+1$}};
            \draw[dotted] (0,-0.1) rectangle (1.5,0.3);
        \end{tikzpicture}
        & or &
        \begin{tikzpicture}
            \node[] (0) at (0, 0) {};
            \node[] (1) at (1.5, 0) {};
            \node[] (a) at (0.75,0.75) {,};
            \node[] (b) at (0.75,-0.75) {,};
            \node[] (2) at (-0.6,0.75) {};
            \node[] (3) at (0.65,0.75) {};
            \node[] (4) at (0.85,0.75) {};
            \node[] (5) at (2.1,0.75) {};
            \node[] (6) at (-0.6,-0.75) {};
            \node[] (7) at (0.65,-0.75) {};
            \node[] (8) at (0.85,-0.75) {};
            \node[] (9) at (2.1,-0.75) {};
            
            \draw[squiggle] (2) to (3);
            \draw[<-] (4) to (5);
            \draw[<-] (0) to [bend left] (1);
            \draw (0) to (1);
            \draw[squiggle] (6) to (7);
            \draw (8) to (9);
            
            \node at (0.75, -1.5) {$B$};
            \node (i) at (-0.25,0) {\tiny{$i$}};
            \node (i+1) at (2,0) {\tiny{$i+1$}};
            \draw[dotted] (0,-0.1) rectangle (1.5,0.3);
        \end{tikzpicture}
    \end{tabular}
\end{center}
These pictures should be read as follows: As before, the frame means that the same edge, say $e_j^{(i)}$, appears in two types (undirected and directed) and we will call such an edge a \emph{double} edge. The edges drawn above (below) this double edge, separated by commas, mean that edges $e_k^{(i)}$ with $k<j$ ($k>j$) always occur in exactly one of the two types depicted. We will refer to such edges as \emph{single} edges.
A subgraph as depicted on the left and on the right, respectively, will be called a multi-edge of \emph{type A} and of \emph{type B}, respectively. A multi-edge with only single edges will be called a multi-edge of \emph{type C}. 
With this we can characterize the subsets $S$ of the generators of $R_\textbf{a}$ that are facets of the triangulation $\Ta$ in terms of their graphs $G_S$, generalizing \cite[Theorem 4.1]{jsl-cosmo}. Following the notation therein, we let $Z_S \coloneqq S \cap \{z_i \mid i \in [n]\}$ be the set of white nodes in $S$ and we write $G_{i,j}$ for the induced subgraph of $G_S$ on vertex set  $\{i,i+1,\ldots ,j\}$ (where we take the vertices $\mathrm{mod}\; n$), i.e., $G_{i,j}$ is the graph consisting of every vertex and every multi-edge of $G_S$ being passed while going from $i$ to $j$ in clockwise direction.

\begin{theorem}
\label{thm: cycle triangulation}
Let $S$ be a subset of the generators of $R_{\mathbf{a}}=R_{C_{\mathbf{a}}}$ and let $Z_S =\{z_{i_1}, \ldots , z_{i_k}\}$, where $i_1 < \cdots < i_k$. Then $S$ corresponds to a facet of the triangulation $\Ta$ of $\mathcal{C}_{C_{\mathbf{a}}}$ if and only if $Z_S \neq \emptyset$, and,
     for $1\leq t\leq k$, the graph $G_{i_t,i_{t+1}}$ is of one of the following two types: 
    
    \begin{enumerate}
        \item going from $i_t$ to $i_{t+1}$ in clockwise direction, the first multi-edge is of type $A$, followed by (possibly zero) multi-edges of type $A$ or $B$, followed by one multi-edge of type $C$ whose edges are directed counterclockwise or squiggly, and all (possibly zero) remaining multi-edges are of type $B$.
        \begin{center}
        \begin{tikzpicture}
            
            \node[black, inner sep=3pt] (1) at (1.5, 0) {};
            \node[] (a) at (0.75,0.75) {,};
            \node[] (b) at (0.75,-0.75) {,};
            \node[] (2) at (-0.6,0.75) {};
            \node[] (3) at (0.65,0.75) {};
            \node[] (4) at (0.85,0.75) {};
            \node[] (5) at (2.1,0.75) {};
            \node[] (6) at (-0.6,-0.75) {};
            \node[] (7) at (0.65,-0.75) {};
            \node[] (8) at (0.85,-0.75) {};
            \node[] (9) at (2.1,-0.75) {};

            \draw[dotted] (0.1,-0.1) rectangle (1.4,0.3);
            \node[white,inner sep=3pt] (0) at (0, 0) {};
            
            \draw[squiggle] (2) to (3);
            \draw (4) to (5);
            \draw[->] (0) to [bend left] (1);
            \draw (0) to (1);
            \draw[squiggle] (6) to (7);
            \draw[->] (8) to (9);
            \node at (0.75, -1.5) {A};
            
            \node[black,inner sep=3pt] (10) at (3.5, 0) {};
            \node[black,inner sep=3pt] (11) at (5.5, 0) {};
            \node[white,inner sep=3pt] (12) at (7, 0) {};
            \node[] (it) at (-0.5,0) {$i_t$};
            \node[] (it+1) at (7.7,0) {$i_{t+1}$};

            \node[] (13) at (2.5,0.3) {$A$ or $B$};
            \node[] (14) at (6.25,0.3) {$B$};
            
            \draw[dotted] (1) to (10);
            
            \draw[squiggle] (10) to ($(10)+(0.9,0)$);
            \draw[<-] ($(10) + (1.1,0)$) to (11);
            \node[] (15) at (4.5,0) {,};

            \draw[dotted] (11) to (12);

        \end{tikzpicture}
        \end{center}

        \item going from $i_t$ to $i_{t+1}$ in clockwise direction, the first edge is a multi-edge of type $C$, whose edges are undirected or squiggly, and all (possibly zero) remaining edges are of type $B$.
        
        \begin{center}
        \begin{tikzpicture}
            \node[white,inner sep=3pt] (0) at (0, 0) {};
            \node[black,inner sep=3pt] (1) at (2, 0) {};
            \node[white,inner sep=3pt] (2) at (3.5, 0) {};

            \node[] (3) at (2.75,0.3) {$B$};

            %\draw[squiggle] ($(0) + (0,1)$) to ($(1)+(0,1)$);
            \draw[squiggle] (0) to (0.9,0);
            \draw[] (1.1,0) to (1);
            \node[] (4) at (1,0) {,};
            %\draw[squiggle] ($(0) - (0,1)$) to ($(1)-(0,1)$);

            \draw[dotted] (1) to (2);

            \node[] (it) at (-0.5,0) {$i_{t}$};
            \node[] (it+1) at (4.2,0) {$i_{t+1}$};
        \end{tikzpicture}
        \end{center}
    \end{enumerate}  

\end{theorem}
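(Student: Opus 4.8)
The plan is to make \Cref{cor: unimodular triangulation} concrete for $C_{\mathbf a}$: a set $S$ of generators of $R_{\mathbf a}$ corresponds to a facet of $\Ta$ exactly when $|S|=|V(C_{\mathbf a})|+|E(C_{\mathbf a})|$ and $G_S$ contains none of the patterns listed above -- the fundamental obstructions of \Cref{fig: fundamental obstructions}, the length-$2$ and length-$n$ cyclic obstructions, and the partially directed clockwise paths ending at a white node. \Cref{lem: induced G_S on one multi edge} already fixes the picture one multi-edge at a time, so the real content is the interaction of consecutive multi-edges along the cycle. I would first record two facts. (i) Refining \Cref{lem: induced G_S on one multi edge} via the length-$2$ obstructions of \Cref{fig:cyclic} and the within-edge fundamental obstructions, a type-$C$ multi-edge never contains both a clockwise and a counterclockwise edge, and an undirected edge may have no clockwise edge before it nor a counterclockwise edge after it, so its edges form a block of the shape ``undirected/squiggly then clockwise/squiggly'' or ``counterclockwise/squiggly''. (ii) Every type-$A$ and every type-$B$ multi-edge contributes an \emph{undirected} edge to $G_S$ (the undirected half of its double edge), while a type-$A$ (resp.\ type-$B$) multi-edge has no counterclockwise (resp.\ clockwise) edge at all.

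Next comes the bookkeeping. Since every edge of $C_{\mathbf a}$ occurs in $G_S$ once or twice and a multi-edge of type $A$ or $B$ has exactly one double edge whereas one of type $C$ has none, $|S|=|E(C_{\mathbf a})|+|Z_S|+\#\{\text{type-}A\text{/}B\text{ multi-edges}\}$; comparing with $|S|=|V(C_{\mathbf a})|+|E(C_{\mathbf a})|$ yields $\#\{\text{type-}C\text{ multi-edges}\}=|Z_S|=k$. For necessity: if $Z_S=\emptyset$ then the count forces every multi-edge to be of type $A$ or $B$, and then picking from each double edge its clockwise half where present and its undirected half otherwise exhibits a length-$n$ cyclic obstruction -- unless every multi-edge is of type $B$, in which case the counterclockwise halves give the all-counterclockwise obstruction; hence $Z_S\neq\emptyset$. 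As a white node is never the head of a directed edge, in each arc $G_{i_t,i_{t+1}}$ the multi-edge at $i_t$ is not of type $B$ and the one at $i_{t+1}$ is not of type $A$; and if some arc had \emph{no} type-$C$ multi-edge, then its first (type-$A$) multi-edge provides a clockwise edge, all later multi-edges provide undirected edges by (ii), and $i_{t+1}$ is white, giving a zig-zag obstruction. So each arc has at least one type-$C$ multi-edge, hence exactly one by the count. Finally, running the zig-zag argument once more pins down the shape: the unique type-$C$ multi-edge of the arc can contain neither an undirected nor a clockwise edge (either would let a partially directed clockwise path -- fed by the undirected edges of the later type-$A$/$B$ multi-edges -- reach the white node $i_{t+1}$), so it is ``counterclockwise/squiggly''; no multi-edge after it is of type $A$; and if it is the \emph{first} multi-edge of the arc it has no clockwise edge either (no arrow may point at $i_t$), giving exactly the dichotomy $(1)$/$(2)$.

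For sufficiency, assume $G_S$ has the stated shape. The bookkeeping identity gives $|S|=|V(C_{\mathbf a})|+|E(C_{\mathbf a})|$, so it remains to verify that $G_S$ avoids every obstruction. The within-edge fundamental and the length-$2$ cyclic obstructions are excluded by the explicit local form of types $A$, $B$, $C$ from (i) and \Cref{lem: induced G_S on one multi edge}. ``An arrow into a white node'' is excluded since at each white node $i_t$ the incident multi-edges are of type $A$ on one side (arrows clockwise, hence away from $i_t$) and of type $B$ or the undirected/squiggly type $C$ on the other (arrows counterclockwise away from $i_t$, or none). A zig-zag obstruction cannot occur because a partially directed clockwise path in $G_S$ must stop before reaching a type-$C$ multi-edge, which carries no clockwise or undirected edge, and hence ends at a black node. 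The length-$n$ cyclic obstructions are excluded because every arc contains a multi-edge with no counterclockwise edge (type $A$ in case $(1)$, the undirected/squiggly type $C$ in case $(2)$), killing the all-counterclockwise pattern, while the ``at least one clockwise edge'' pattern is killed by the counterclockwise/squiggly type-$C$ multi-edge of any case-$(1)$ arc, and if all arcs are of type $(2)$ then $G_S$ has no clockwise edge whatsoever.

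The main obstacle is the zig-zag analysis. One must be careful that a partially directed clockwise path may freely interleave clockwise and undirected edges and may use either ``colour'' of a double edge, so the only genuine barrier to prolonging such a path is a type-$C$ multi-edge; showing that these ``cuts'' sit so that no admissible path ever reaches a white node -- around the whole cycle, and with attention to the cyclic wrap-around when $k$ is small -- is the core of both directions. A secondary delicate point is gluing the local data of \Cref{lem: induced G_S on one multi edge} to the counting identity, in particular the pigeonhole step forcing exactly one type-$C$ multi-edge per arc and the boundary case in which a white node is adjacent to a type-$C$ multi-edge (the degenerate instance of case $(2)$).
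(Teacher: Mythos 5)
The proposal follows the same overall strategy as the paper: translate the facet condition into avoidance of the obstructions, use the dimension count $|S|=|V|+|E|$ to force exactly one type-$C$ multi-edge per arc $G_{i_t,i_{t+1}}$, rule out $Z_S=\emptyset$ via a cyclic obstruction, pin the first (resp.\ last) multi-edge of an arc as not type $B$ (resp.\ not $A$) via the ``no arrow into a white node'' fundamental obstruction, and then run a zig-zag argument. The bookkeeping identity and the pigeonhole step forcing one type-$C$ per arc are correct and a nice way to package what the paper does.

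There is, however, a genuine gap in the step that determines the edge-types allowed inside the type-$C$ multi-edge. You assert that ``the unique type-$C$ multi-edge of the arc can contain neither an undirected nor a clockwise edge (either would let a partially directed clockwise path \dots reach the white node $i_{t+1}$), so it is counterclockwise/squiggly.'' This is only correct when the type-$C$ multi-edge is \emph{not} the first multi-edge of the arc. The zig-zag obstruction requires a clockwise-directed first edge; an undirected edge of the type-$C$ multi-edge can only participate in such a path if some earlier type-$A$ multi-edge supplies the clockwise start. If the type-$C$ multi-edge is the first one (the situation of case~(2)), there is no preceding clockwise edge, so the argument does not rule out undirected edges there --- and indeed, case~(2) of the theorem explicitly allows undirected edges. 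Your subsequent patch, ``if it is the first multi-edge of the arc it has no clockwise edge either (no arrow may point at $i_t$),'' compounds the problem: a clockwise edge in the first multi-edge points away from $i_t$, not into it, so the parenthetical justifies ruling out \emph{counterclockwise} edges, not clockwise ones. As written, the two sentences together would erroneously conclude that the type-$C$ multi-edge in case~(2) is all-squiggly, contradicting the theorem. The correct bookkeeping, which the paper carries out, is: (a) if the type-$C$ multi-edge is not first, use the preceding type-$A$ clockwise edge to rule out both undirected and clockwise edges in it, giving ``counterclockwise/squiggly''; (b) if it is first, rule out counterclockwise edges because they would point into the white node $i_t$, and rule out clockwise edges either because they point into white $i_{t+1}$ (if the arc has one multi-edge) or because they start a zig-zag that feeds into the undirected edges of the later type-$B$ multi-edges --- leaving undirected and squiggly edges, as in case~(2). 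The same imprecision reappears in your sufficiency argument (``a type-$C$ multi-edge, which carries no clockwise or undirected edge''), which is false for case~(2); what saves the argument is that case-(2) arcs have no clockwise edge anywhere, so no partially directed clockwise path can ever enter them, but this has to be said. Finally, you flag the cyclic wrap-around ($k=1$, $i_t=i_{t+1}$) as a ``delicate point'' but do not actually handle it; the paper treats it explicitly by invoking the partially directed \emph{cycle} obstruction when $G_{i_t,i_{t+1}}=G$.

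A minor further imprecision: your ``fact~(i)'' claims that the local (length-$2$) obstructions alone force a type-$C$ multi-edge to have the shape ``undirected/squiggly then clockwise/squiggly'' or ``counterclockwise/squiggly.'' In fact, the length-$2$ cyclic obstructions also permit ``counterclockwise/squiggly then undirected/squiggly''; it is only the non-local zig-zag analysis (and the no-arrow-at-white-node condition) that eliminates that shape in a facet. Since you do not actually use fact~(i) in the necessity direction this does not cause an independent error, but stating it as a local consequence is misleading.
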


\begin{proof}
    Using \Cref{lem: induced G_S on one multi edge}, it is immediate to check that a graph $G_S$ as described above does not contain any forbidden subgraphs. Since it is easily seen that $|S|=\dim \C_{C_{\mathbf{a}}}+1$, it follows that $G_S$ corresponds to a facet of $\Ta$.

    Assume now that $S$ is a facet and suppose that $Z_S=\emptyset$. 
    Since each multi-edge has at most one double edge due to \Cref{lem: induced G_S on one multi edge} and $\vert S \vert = \vert V \vert + \vert E \vert = n+a_1+\cdots+a_{n}$,  each multi-edge is forced to have exactly one double edge. 
    However, in this case, $G_S$ would contain a forbidden subgraph corresponding to the leading term of a cyclic binomial. 
    Hence, $Z_S \neq \emptyset$. Fix $1\leq t\leq k$. We show the following claim:

    \noindent{\sf Claim:} $G_{i_t,i_{t+1}}$ contains exactly one multi-edge of type $C$.\\
Assume, by contradiction, that every multi-edge of $G_{i_t,i_{t+1}}$ is of type $A$ or $B$. In order to avoid fundamental obstructions that multi-edge incident to $i_t$ and $i_{t+1}$, respectively, has to be of type $A$ and $B$, respectively. In particular, $G_{i_t,i_{t+1}}$ contains at least two consecutive edges. But then, if $i_t\neq i_{t+1}$, there is a partially directed path in clockwise direction in $G_{i_t,i_{t+1}}$. If $i_t=i_{t+1}$, i.e., $G_{i_t,i_{t+1}}=G$, then there is partially directed cylce in clockwise direction in $G$. This yields a contradiction. Since $\vert S \vert = n+a_1+\cdots+a_n$, it follows that $G_{i_t,i_{t+1}}$ contains exactly one multi-edge of type $C$ which establishes the claim.

Note that the previous arguments also show that there always has to be an edge of type $C$, not containing an undirected edge, between the last edge of type $A$ in $G_{i_t,i_{t+1}}$ and the node $i_{t+1}$. Hence, the unique multi-edge of type $C$ can only be followed by multi-edges of type $B$ (or none). In particular, if this edge is incident to $i_t$ in $G_{i_t,i_{t+1}}$, then all remaining multi-edges are of type $B$. Due to the fundamental obstructions, in this case, the edge of type $C$  is not allowed to contain counter-clockwise directed edges, and ,clockwise directed edges are excluded since they  would yield a partially directed path in clockwise direction (or a fundamental obstruction if $i_{t+1}=i_t+1$). Hence, $G_{i_t,i_{t+1}}$ is as described in (2). 

Now assume that the multi-edge of type $C$ of $G_{i_t,i_{t+1}}$ is not incident to $i_t$. The previous arguments show that it can only contain squiggly or counter-clockwise directed edge. We also already know that it has to be followed by multi-edges of type $B$, so it remains to consider the multi-edges between $i_t$ and the multi-edge of type $C$. However, the only restriction we encounter is the multi-edge incident to $i_t$ in $G_{i_t,i_{t+1}}$ being of type $A$. Besides, both type $A$  and $B$ are possible since the multi-edge of type $C$ will always interrupt possible zig-zag or cyclic obstructions. Hence, $G_{i_t,i_{t+1}}$ is as described in (1).
\end{proof}

In order to apply \Cref{thm: visibility formula for h star} to the triangulation $\Ta$ we need to define a point that is in general position with respect to all facets of $\Ta$. 
To this end, define $q \in (0, 1)^{ [n] \cup E(C_{\mathbf{a}})}$ by 
\begin{align*}
    q_i &\coloneqq \frac{1}{n} \cdot \frac{n + \frac{1}{2}}{1 + n } \text{ for all $i \in [n]$,}
\intertext{and}
    q_{e_{j}^{(i)}} &\coloneqq \frac{1}{a_1+\cdots+a_n } \cdot \frac{\frac{1}{2}}{1 + n} \text{ for all $e_j^{(i)}\in E(C_{\mathbf{a}})$.}
\end{align*}
Since $\sum_{v \in [n]} q_v + \sum_{e \in E(C_{\mathbf{a}})} q_e=1$ and $q$ has strictly positive coordinates, we have $q\in \C_{C_{\mathbf{a}}}$. We will see in the proof of the following lemma that $q$ is indeed in general position with respect to any facet of $\Ta$. In order to compute the $h^\ast$-polynomial of $\C_{C_{\mathbf{a}}}$ it remains to determine, how many facets of each facet $S$ of $\Ta$ are visible from $q$. 
To achieve this, for each vertex $p$ of $S$, we will compute the unique homogeneous hyperplane $\mathcal{H}$ containing all vertices of $S$ except $p$ and check if $p$ and $q$ lie on the same or on different sides of $\mathcal{H}$. 
In the former case, the facet not containing $p$ is  \emph{not} visible; in the second case, it is visible. 

\begin{lemma}
    \label{lem: visibility of cycle with multi-edges}
    Let $q \in (0,1)^{[n] \cup E(C_{\mathbf{a}})}$ as defined above. 
    Let $S$ be a facet of $\Ta$, $p$ a vertex of $S$ and let $F$ be the facet of $S$ not containing $p$. As in \Cref{thm: cycle triangulation}, let $Z_S =\{z_{i_1}, \ldots , z_{i_k}\}$, where $i_1 < \cdots < i_k$.
    Then $F$ is visible from $q$ if and only if $p$ is one of the following:
    \begin{enumerate}
    \item a clockwise directed edge contained in a double edge.
    \item counter-clockwise directed edge contained in a double edge that occurs between the unique edge of type $C$ in one of the induced subgraphs $G_{i_t,i_{t+1}}$ and the node $i_{t+1}$.
    \item  an undirected edge contained in a double edge that occurs between $i_t$ and the unique edge of type $C$ in one of the induced subgraphs $G_{i_t,i_{t+1}}$.
   \item a squiggly edge.
   \end{enumerate}
   In all the other cases, $F$ is not visible.
\end{lemma}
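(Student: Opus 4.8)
The plan is to translate visibility into a sign condition on barycentric coordinates and then solve the resulting linear system using the structure of $G_S$ from \Cref{thm: cycle triangulation}. Set $\Delta=\conv(S)$. Since $\Ta$ is unimodular, $\Delta$ is a full-dimensional simplex inside $\mathrm{aff}(\C_{C_{\mathbf{a}}})=\{x\in\RR^{[n]\cup E(C_{\mathbf{a}})}~:~\sum_\alpha x_\alpha=1\}$, and its $\dim\C_{C_{\mathbf{a}}}+1$ vertices are linearly independent, being affinely independent with affine hull avoiding the origin. Hence $q$ has a unique expansion $q=\sum_{p\in S}\lambda_p\,p$; applying the linear functional $x\mapsto\sum_\alpha x_\alpha$ and using that every lattice point of $\C_{C_{\mathbf{a}}}$ has coordinate sum $1$ shows $\sum_{p\in S}\lambda_p=1$, so the $\lambda_p$ are the barycentric coordinates of $q$ with respect to $\Delta$. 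The facet $F=\conv(S\setminus\{p\})$ spans the hyperplane $\{\lambda_p=0\}$, which separates $q$ from $\Delta$ precisely when $\lambda_p<0$; thus $F$ is visible from $q$ if and only if $\lambda_p<0$. Once we verify that no $\lambda_p$ vanishes, this will also show that $q$ is in general position with respect to every facet of $\Ta$ (general position with respect to $\C_{C_{\mathbf{a}}}$ itself is immediate, since $q$ has strictly positive coordinates while every facet functional of \Cref{thm: facet description} is nonnegative with a positive vertex entry). It therefore remains to compute the signs of the $\lambda_p$.

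Reading $q=\sum_p\lambda_p p$ off one coordinate at a time yields two kinds of equations. For a fixed edge $e_j^{(i)}$ of $C_{\mathbf{a}}$, only the one or two variables in $S\cap V(e_j^{(i)})$ have nonzero $e_j^{(i)}$-coordinate, with sign $+1$ for $z_j^{(i)},\overrightarrow{y}_j^{(i)},\overleftarrow{y}_j^{(i)}$ and $-1$ for $t_j^{(i)}$; the corresponding equation thus involves only those $\lambda$'s. If $e_j^{(i)}$ is a \emph{single} edge its $\lambda$ is then determined at once: it equals the (positive) $e_j^{(i)}$-coordinate of $q$ if the edge is undirected or directed, and the negative of it if the edge is squiggly. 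This already disposes of case (4) — every squiggly edge is single by \Cref{lem: induced G_S on one multi edge}, hence gets a negative coefficient — and shows single undirected or directed edges never produce a visible facet. For a vertex $i\in[n]$, its equation involves $\lambda_{z_i}$ (when $z_i\in S$) together with the $\lambda$'s of the variables on the two multi-edges incident to $i$, and by the previous step the single-edge contributions among these are already-known quantities, each a signed edge-coordinate of $q$.

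For the remaining unknowns — the coefficients of the two halves of every double edge and the white-node coefficients $\lambda_{z_{i_t}}$ — I would substitute the edge-equation of each double edge into the vertex-equations and solve. By \Cref{thm: cycle triangulation} this system decouples along the segments $G_{i_t,i_{t+1}}$: each segment contains exactly one multi-edge of type $C$, all of whose edges are single and thus already known, and the equations at the (black) interior vertices of the segment form a chain propagating these values outward, determining one after another the double-edge coefficient of each multi-edge of type $A$ or $B$ in the segment, while the equations at the white endpoints pin down $\lambda_{z_{i_t}}$ and $\lambda_{z_{i_{t+1}}}$. Running this propagation with the explicit point $q$ — which distributes a large mass evenly over the $n$ vertices and a tiny mass evenly over the edges — one finds that exactly one of the two halves of each double edge acquires a negative coefficient of size comparable to the vertex mass, while the complementary half and every white-node coefficient come out positive; a short check against \Cref{thm: cycle triangulation} identifies the negative half as the clockwise-directed one for a multi-edge of type $A$, the counterclockwise-directed one for a multi-edge of type $B$ lying between the type-$C$ multi-edge and $i_{t+1}$, and the undirected one otherwise, i.e.\ exactly situations (1), (2), and (3). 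The $\tfrac12$-perturbation built into $q$ only contributes lower-order terms proportional to the edge mass, which never reverse a sign; in particular no $\lambda_p$ is zero.

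The main obstacle is this last step: carrying out the chain-propagation uniformly over the two segment types of \Cref{thm: cycle triangulation}, tracking the extra single edges that may flank a double edge when some $a_i>1$ (these contribute only controlled $\pm$(edge mass) corrections), and handling the wrap-around case $\vert Z_S\vert=1$, where a single segment is the whole cycle. The computation itself is elementary, but it must be organized carefully enough that the dominant vertex-mass terms, rather than the edge-mass perturbations, are seen to control every sign.
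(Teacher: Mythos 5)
Your framework is sound and is essentially the dual formulation of what the paper does: the paper, for each vertex $p$ of the simplex $S$, writes down the unique homogeneous hyperplane through the remaining vertices and checks on which side $q$ lies, which is exactly the sign of your barycentric coordinate $\lambda_p$. Your reduction of visibility to $\lambda_p<0$, your observation that $q$ is automatically in general position relative to $\C_{C_{\mathbf{a}}}$ itself, and your complete treatment of single edges (giving case (4) and the non-visibility of single undirected or directed edges, matching the paper's Case 1) are all correct.

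The gap is that the remainder of the lemma is asserted rather than proved. Cases (1)--(3) and the non-visibility claims for white nodes and for the complementary halves of double edges all live in the ``chain propagation'' step, and there you only describe what ``one finds'' without carrying out the computation; you acknowledge this yourself in your final paragraph. This is not a minor bookkeeping omission: the sign pattern genuinely depends on the position of the double edge relative to the unique type-$C$ multi-edge in its segment $G_{i_t,i_{t+1}}$ (compare your cases (2) and (3), and note that in the paper's Case 3 the same kind of vertex --- a counter-clockwise double-edge half --- is visible in two of the three local configurations and not visible in the third), so the propagation must be run separately for each local configuration and the dominant vertex-mass term isolated from the edge-mass corrections in each one. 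The paper does precisely this by exhibiting explicit coefficient vectors for every local configuration (its Cases 2--5, with several sub-pictures each). Until you execute that case analysis --- including the wrap-around situation $|Z_S|=1$ and the verification that no $\lambda_p$ vanishes --- the proof is a plan, not a proof.
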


\begin{proof}
In the following, we set $G=C_{\mathbf{a}}$ and $E=E(C_{\mathbf{a}})$. 
We need to distinguish different cases according to the type of $p$.\\

\noindent{\sf Case 1 ($p$ is a directed, squiggly or undirected single edge).}
Let $e_j^{(i)}$ be the edge of $C_{\mathbf{a}}$ corresponding to $p$. Define
\[
\mathcal{H}_1=\{w\in \RR^{[n]\cup E}~ :~ w_{e_j^{(i)}}=0\}
\]
and let $\mathcal{H}_1^+$ and $\mathcal{H}_1^-$ be the positive and the negative open halfspace of $\mathcal{H}_1$, respectively. It is immediate that $q \in \mathcal{H}_1^+$, and $p\in\mathcal{H}_1^-$ if and only if $p$ is a squiggly edge. Since $u\in \mathcal{H}_1$ for all vertices of $S$ with $u\neq p$, we conclude that if $p$ is a single edge, $F$ is visible if and only if $p$ is squiggly.  This is case (4) in the lemma.

\noindent{\sf Case 2 ($p$ is a clockwise directed edge contained in a double edge).}
 Let $e_j^{(i)}$ be the edge of $C_{\mathbf{a}}$ corresponding to $p$ and let $G_{i_t,i_{t+1}}$ be the induced subgraph between white nodes containing it. 
 By \Cref{thm: facet description}, $e_j^{(i)}$ is the unique double edge in a multi-edge of type $A$ and $S$ belongs to the facets given by \Cref{thm: cycle triangulation} (1). 
 Locally, starting with the multi-edge between $i$ and $i+1$ and ending with the unique multi-edge of type $C$ in  $G_{i_t,i_{t+1}}$, the induced subgraph looks as follows where gray nodes can be either black or white and $p$ is depicted in blue: 
        \begin{center}
        \begin{tikzpicture}
            
            \node[black, inner sep=3pt] (1) at (1.5, 0) {};
            \node[] (a) at (0.75,0.75) {,};
            \node[] (b) at (0.75,-0.75) {,};
            \node[] (2) at (-0.6,0.75) {};
            \node[] (3) at (0.65,0.75) {};
            \node[] (4) at (0.85,0.75) {};
            \node[] (5) at (2.1,0.75) {};
            \node[] (6) at (-0.6,-0.75) {};
            \node[] (7) at (0.65,-0.75) {};
            \node[] (8) at (0.85,-0.75) {};
            \node[] (9) at (2.1,-0.75) {};

            \draw[dotted] (0.1,-0.1) rectangle (1.4,0.3);
            \node[gray,inner sep=3pt] (0) at (0, 0) {};
            
            \draw[squiggle] (2) to (3);
            \draw (4) to (5);
            \draw[blue,->] (0) to [bend left] (1);
            \draw (0) to (1);
            \draw[squiggle] (6) to (7);
            \draw[->] (8) to (9);
            
            \node[black,inner sep=3pt] (10) at (3.5, 0) {};
            \node[gray,inner sep=3pt] (11) at (5.5, 0) {};

            \node[] (13) at (2.5,0.3) {$A$ or $B$};
            
            \draw[dotted] (1) to (10);
            
            \draw[squiggle] (10) to ($(10)+(0.9,0)$);
            \draw[<-] ($(10) + (1.1,0)$) to (11);
            \node[] (15) at (4.5,0) {,};

            \node at (0,-0.4) {\small 0};
            \node at (0,1.1) {\small 1};
            \node at (1.5, 1.1) {\small 0};
            \node at (0, -1.1) {\small 1};
            \node at (1.5, -1.1) {\small 1};
            \node at (0.75,-0.3) {\small 0};
            \node at (1.5, -0.4) {\small 1};

            \node at (3.5, -0.4) {\small 1};
            \node at (5.5, -0.4) {\small 0};
            \node at (4, -0.4) {\small 1};
            \node at (5, -0.4) {\small 1};
            
        \end{tikzpicture}
    \end{center}
We now define a vector $c\in \RR^{[n]\cup E}$ as follows: we set $c_k=0$ if $k$ does not occur in the previous picture, i.e., $k$ is external of the two gray nodes depicted above. 
If $k$ is a vertex or an edge depicted above, then we let $c_k$ be the label (if it exists) next to the corresponding vertex or edge in the picture, e.g., $c_f=1$ and $c_f=0$, respectively, if $f$ is a squiggly and undirected edge, respectively, above the double edge $e_j^{(i)}$. 
Finally, if $f$ is an edge of a multi-edge of type $A$ or $B$ without label, then $c_f=2$ if $f$ is a squiggly edge. 
We set $c_f=0$, otherwise. For the non-gray vertices $k$ incident to the edges of type $A$ and type $B$ included in the picture we set $c_k=1$.
We consider the hyperplane
    \begin{align*}
        \mathcal{H}_2 \coloneqq \{w \in \RR^{[n] \cup E} ~:~ c^\top w=0\}.
    \end{align*}
    It is easy to see that $p\in \mathcal{H}_2^-$ and $q\in \mathcal{H}_2^+$. Hence, $F$ is visible from $q$.

\noindent{\sf Case 3 ($p$ is a counter-clockwise directed edge contained in a double edge).}
       If a counterclockwise directed edge is part of a double edge in $G_S$, then again by \Cref{thm: cycle triangulation}, locally the induced subgraph $G_S$ has to fall into one of the following three cases (where $p$ is depicted in blue or red):
    \begin{center}
        \begin{tabular}{c|c}
            \begin{tikzpicture}
            \node[] (a) at (0.75,0.75) {,};
            \node[] (b) at (0.75,-0.75) {,};
            \node[] (2) at (-0.6,0.75) {};
            \node[] (3) at (0.65,0.75) {};
            \node[] (4) at (0.85,0.75) {};
            \node[] (5) at (2.1,0.75) {};
            \node[] (6) at (-0.6,-0.75) {};
            \node[] (7) at (0.65,-0.75) {};
            \node[] (8) at (0.85,-0.75) {};
            \node[] (9) at (2.1,-0.75) {};

            \draw[dotted] (0.1,-0.1) rectangle (1.4,0.3);
            \node[black,inner sep=3pt] (0) at (0, 0) {};
            \node[gray, inner sep=3pt] (1) at (1.5, 0) {};
            
            \draw[squiggle] (2) to (3);
            \draw[<-] (4) to (5);
            \draw[blue,<-] (0) to [bend left] (1);
            \draw (0) to (1);
            \draw[squiggle] (6) to (7);
            \draw[-] (8) to (9);
            
            \node[black,inner sep=3pt] (10) at (-2, 0) {};
            \node[black,inner sep=3pt] (11) at (-4, 0) {};

            \node[] (13) at (-1,0.3) {$B$};
            
            \draw[dotted] (0) to (10);
            
            \draw[->] (10) to ($(10)+(-0.9,0)$);
            \draw[squiggle] ($(10) + (-1.1,0)$) to (11);
            \node[] (15) at (-3,0) {,};

            \node at (0,-0.4) {\small 1};
            \node at (0,1.1) {\small 1};
            \node at (1.5, 1.1) {\small 1};
            \node at (0, -1.1) {\small 1};
            \node at (1.5, -1.1) {\small 0};
            \node at (0.75,-0.3) {\small 0};
            \node at (1.5, -0.4) {\small 0};

            \node at (-2, -0.4) {\small 1};
            \node at (-4, -0.4) {\small 0};
            \node at (-2.5, -0.4) {\small -1};
            \node at (-3.5, -0.4) {\small 1};
        \end{tikzpicture}
        &
        \begin{tikzpicture}
            \node[] (a) at (0.75,0.75) {,};
            \node[] (b) at (0.75,-0.75) {,};
            \node[] (2) at (-0.6,0.75) {};
            \node[] (3) at (0.65,0.75) {};
            \node[] (4) at (0.85,0.75) {};
            \node[] (5) at (2.1,0.75) {};
            \node[] (6) at (-0.6,-0.75) {};
            \node[] (7) at (0.65,-0.75) {};
            \node[] (8) at (0.85,-0.75) {};
            \node[] (9) at (2.1,-0.75) {};

            \draw[dotted] (0.1,-0.1) rectangle (1.4,0.3);
            \node[black,inner sep=3pt] (0) at (0, 0) {};
            \node[gray, inner sep=3pt] (1) at (1.5, 0) {};
            
            \draw[squiggle] (2) to (3);
            \draw[<-] (4) to (5);
            \draw[blue,<-] (0) to [bend left] (1);
            \draw (0) to (1);
            \draw[squiggle] (6) to (7);
            \draw[-] (8) to (9);
            
            \node[black,inner sep=3pt] (10) at (-2, 0) {};
            \node[white,inner sep=3pt] (11) at (-4, 0) {};

            \node[] (13) at (-1,0.3) {$B$};
            
            \draw[dotted] (0) to (10);
            
            \draw (10) to ($(10)+(-0.9,0)$);
            \draw[squiggle] ($(10) + (-1.1,0)$) to (11);
            \node[] (15) at (-3,0) {,};

            \node at (0,-0.4) {\small 1};
            \node at (0,1.1) {\small 1};
            \node at (1.5, 1.1) {\small 1};
            \node at (0, -1.1) {\small 1};
            \node at (1.5, -1.1) {\small 0};
            \node at (0.75,-0.3) {\small 0};
            \node at (1.5, -0.4) {\small 0};

            \node at (-2, -0.4) {\small 1};
            \node at (-4, -0.4) {\small 0};
            \node at (-2.5, -0.4) {\small 0};
            \node at (-3.5, -0.4) {\small 1};        \end{tikzpicture} \\ \hline
        \begin{tikzpicture}
            
            \node[black, inner sep=3pt] (1) at (1.5, 0) {};
            \node[] (a) at (0.75,0.75) {,};
            \node[] (b) at (0.75,-0.75) {,};
            \node[] (2) at (-0.6,0.75) {};
            \node[] (3) at (0.65,0.75) {};
            \node[] (4) at (0.85,0.75) {};
            \node[] (5) at (2.1,0.75) {};
            \node[] (6) at (-0.6,-0.75) {};
            \node[] (7) at (0.65,-0.75) {};
            \node[] (8) at (0.85,-0.75) {};
            \node[] (9) at (2.1,-0.75) {};

            \draw[dotted] (0.1,-0.1) rectangle (1.4,0.3);
            \node[black,inner sep=3pt] (0) at (0, 0) {};
            
            \draw[squiggle] (2) to (3);
            \draw[<-] (4) to (5);
            \draw[red,<-] (0) to [bend left] (1);
            \draw (0) to (1);
            \draw[squiggle] (6) to (7);
            \draw (8) to (9);
            
            \node[black,inner sep=3pt] (10) at (3.5, 0) {};
            \node[gray,inner sep=3pt] (11) at (5.5, 0) {};

            \node[] (13) at (2.5,0.3) {$A$ or $B$};
            
            \draw[dotted] (1) to (10);
            
            \draw[squiggle] (10) to ($(10)+(0.9,0)$);
            \draw[<-] ($(10) + (1.1,0)$) to (11);
            \node[] (15) at (4.5,0) {,};

            \node at (0,-0.4) {\small 0};
            \node at (0,1.1) {\small 1};
            \node at (1.5, 1.1) {\small -1};
            \node at (0, -1.1) {\small 1};
            \node at (1.5, -1.1) {\small 0};
            \node at (0.75,-0.3) {\small 0};
            \node at (1.5, -0.4) {\small 1};

            \node at (3.5, -0.4) {\small 1};
            \node at (5.5, -0.4) {\small 0};
            \node at (4, -0.4) {\small 1};
            \node at (5, -0.4) {\small 1}; 
        \end{tikzpicture}
        \end{tabular}
    \end{center}
  We define a hyperplane $\mathcal{H}_3$ in the same way as we did in Case $2$, using the labels from the pictures. Note that also for depicted multi-edges without labels we use the same rules. It is easy to verify that in the first two cases, $p$ and $q$ lie in different halfspaces of  $\mathcal{H}_3$ and hence, $F$ is visible whereas in the third case they lie in the same halfspace and $F$ is thus not visible. 

\noindent{\sf Case 4 ($p$ is an undirected edge contained in a double edge).}
   As before, using \Cref{thm: cycle triangulation} for the local picture we have the following possibilities:
    \begin{center}
        \begin{tabular}{c|c}
        \begin{tikzpicture}
            \node[black, inner sep=3pt] (1) at (1.5, 0) {};
            \node[] (a) at (0.75,0.75) {,};
            \node[] (b) at (0.75,-0.75) {,};
            \node[] (2) at (-0.6,0.75) {};
            \node[] (3) at (0.65,0.75) {};
            \node[] (4) at (0.85,0.75) {};
            \node[] (5) at (2.1,0.75) {};
            \node[] (6) at (-0.6,-0.75) {};
            \node[] (7) at (0.65,-0.75) {};
            \node[] (8) at (0.85,-0.75) {};
            \node[] (9) at (2.1,-0.75) {};

            \draw[dotted] (0.1,-0.1) rectangle (1.4,0.3);
            \node[gray,inner sep=3pt] (0) at (0, 0) {};
            
            \draw[squiggle] (2) to (3);
            \draw (4) to (5);
            \draw[->] (0) to [bend left] (1);
            \draw[red] (0) to (1);
            \draw[squiggle] (6) to (7);
            \draw[->] (8) to (9);
            
            \node[black,inner sep=3pt] (10) at (3.5, 0) {};
            \node[gray,inner sep=3pt] (11) at (5.5, 0) {};

            \node[] (13) at (2.5,0.3) {$A$ or $B$};
            
            \draw[dotted] (1) to (10);
            
            \draw[squiggle] (10) to ($(10)+(0.9,0)$);
            \draw[<-] ($(10) + (1.1,0)$) to (11);
            \node[] (15) at (4.5,0) {,};

            \node at (0,-0.4) {\small 0};
            \node at (0,1.1) {\small 1};
            \node at (1.5, 1.1) {\small 0};
            \node at (0, -1.1) {\small 1};
            \node at (1.5, -1.1) {\small 1};
            \node at (0.75,-0.3) {\small 1};
            \node at (1.5, -0.4) {\small 1};

            \node at (3.5, -0.4) {\small 1};
            \node at (5.5, -0.4) {\small 0};
            \node at (4, -0.4) {\small 1};
            \node at (5, -0.4) {\small 1};
        \end{tikzpicture} &
        \begin{tikzpicture}
        
            \node[] (a) at (0.75,0.75) {,};
            \node[] (b) at (0.75,-0.75) {,};
            \node[] (2) at (-0.6,0.75) {};
            \node[] (3) at (0.65,0.75) {};
            \node[] (4) at (0.85,0.75) {};
            \node[] (5) at (2.1,0.75) {};
            \node[] (6) at (-0.6,-0.75) {};
            \node[] (7) at (0.65,-0.75) {};
            \node[] (8) at (0.85,-0.75) {};
            \node[] (9) at (2.1,-0.75) {};

            \draw[dotted] (0.1,-0.1) rectangle (1.4,0.3);
            \node[black,inner sep=3pt] (0) at (0, 0) {};
            \node[gray, inner sep=3pt] (1) at (1.5, 0) {};
            
            \draw[squiggle] (2) to (3);
            \draw[<-] (4) to (5);
            \draw[<-] (0) to [bend left] (1);
            \draw[red] (0) to (1);
            \draw[squiggle] (6) to (7);
            \draw[-] (8) to (9);
            
            \node[black,inner sep=3pt] (10) at (-2, 0) {};
            \node[black,inner sep=3pt] (11) at (-4, 0) {};

            \node[] (13) at (-1,0.3) {$B$};
            
            \draw[dotted] (0) to (10);
            
            \draw[->] (10) to ($(10)+(-0.9,0)$);
            \draw[squiggle] ($(10) + (-1.1,0)$) to (11);
            \node[] (15) at (-3,0) {,};

            \node at (0,-0.4) {\small 1};
            \node at (0,1.1) {\small 1};
            \node at (1.5, 1.1) {\small 1};
            \node at (0.75,-0.3) {\small 1};
            \node at (0, -1.1) {\small 1};
            \node at (1.5, -1.1) {\small 0};
            \node at (1.5, -0.4) {\small 0};

            \node at (-2, -0.4) {\small 1};
            \node at (-2.5, -0.4) {\small -1};
            \node at (-3.5, -0.4) {\small 1};
            \node at (-4, -0.4) {\small 0};
        \end{tikzpicture}
        \\ \hline
        \begin{tikzpicture}

            \node[] (a) at (0.75,0.75) {,};
            \node[] (b) at (0.75,-0.75) {,};
            \node[] (2) at (-0.6,0.75) {};
            \node[] (3) at (0.65,0.75) {};
            \node[] (4) at (0.85,0.75) {};
            \node[] (5) at (2.1,0.75) {};
            \node[] (6) at (-0.6,-0.75) {};
            \node[] (7) at (0.65,-0.75) {};
            \node[] (8) at (0.85,-0.75) {};
            \node[] (9) at (2.1,-0.75) {};

            \draw[dotted] (0.1,-0.1) rectangle (1.4,0.3);
            \node[black,inner sep=3pt] (0) at (0, 0) {};
            \node[gray, inner sep=3pt] (1) at (1.5, 0) {};
            
            \draw[squiggle] (2) to (3);
            \draw[<-] (4) to (5);
            \draw[<-] (0) to [bend left] (1);
            \draw[red] (0) to (1);
            \draw[squiggle] (6) to (7);
            \draw[-] (8) to (9);
            
            \node[black,inner sep=3pt] (10) at (-2, 0) {};
            \node[white,inner sep=3pt] (11) at (-4, 0) {};

            \node[] (13) at (-1,0.3) {$B$};
            
            \draw[dotted] (0) to (10);
            
            \draw (10) to ($(10)+(-0.9,0)$);
            \draw[squiggle] ($(10) + (-1.1,0)$) to (11);
            \node[] (15) at (-3,0) {,};

            \node at (0,-0.4) {\small 1};
            \node at (0,1.1) {\small 1};
            \node at (1.5, 1.1) {\small 1};
            \node at (0.75,-0.3) {\small 1};
            \node at (0, -1.1) {\small 1};
            \node at (1.5, -1.1) {\small 0};
            \node at (1.5, -0.4) {\small 0};

            \node at (-2, -0.4) {\small 1};
            \node at (-2.5, -0.4) {\small 0};
            \node at (-3.5, -0.4) {\small 1};
            \node at (-4, -0.4) {\small 0};
        \end{tikzpicture} &
        \begin{tikzpicture}
            \node[black, inner sep=3pt] (1) at (1.5, 0) {};
            \node[] (a) at (0.75,0.75) {,};
            \node[] (b) at (0.75,-0.75) {,};
            \node[] (2) at (-0.6,0.75) {};
            \node[] (3) at (0.65,0.75) {};
            \node[] (4) at (0.85,0.75) {};
            \node[] (5) at (2.1,0.75) {};
            \node[] (6) at (-0.6,-0.75) {};
            \node[] (7) at (0.65,-0.75) {};
            \node[] (8) at (0.85,-0.75) {};
            \node[] (9) at (2.1,-0.75) {};

            \draw[dotted] (0.1,-0.1) rectangle (1.4,0.3);
            \node[black,inner sep=3pt] (0) at (0, 0) {};
            
            \draw[squiggle] (2) to (3);
            \draw[<-] (4) to (5);
            \draw[<-] (0) to [bend left] (1);
            \draw[blue] (0) to (1);
            \draw[squiggle] (6) to (7);
            \draw (8) to (9);
            
            \node[black,inner sep=3pt] (10) at (3.5, 0) {};
            \node[gray,inner sep=3pt] (11) at (5.5, 0) {};

            \node[] (13) at (2.5,0.3) {$A$ or $B$};
            
            \draw[dotted] (1) to (10);
            
            \draw[squiggle] (10) to ($(10)+(0.9,0)$);
            \draw[<-] ($(10) + (1.1,0)$) to (11);
            \node[] (15) at (4.5,0) {,};

            \node at (0,-0.4) {\small 0};
            \node at (0,1.1) {\small 1};
            \node at (1.5, 1.1) {\small -1};
            \node at (0, -1.1) {\small 1};
            \node at (1.5, -1.1) {\small 0};
            \node at (0.75,-0.3) {\small -1};
            \node at (1.5, -0.4) {\small 1};

            \node at (3.5, -0.4) {\small 1};
            \node at (5.5, -0.4) {\small 0};
            \node at (4, -0.4) {\small 1};
            \node at (5, -0.4) {\small 1};
        \end{tikzpicture}
    \end{tabular}
    \end{center}
     We define a hyperplane $\mathcal{H}_4$ in the same way as we did in Case $2$, using the labels from the pictures. Note that also for depicted multi-edges without labels we use the same rules. It is easy to verify that in the first three cases, $p$ and $q$ lie in the same halfspace and $F$ is hence not visible whereas in the fourth case, $p$ and $q$ lie in different halfspaces and thus $F$ is visible.

    \noindent{\sf Case 5 ($p$ is a white node).}
    Suppose first that $p$ is the only one white node in $G_S$. In this case, we set $c_v=1$ for all $v \in [n]$, $c_e=2$ if $e$ appears as a squiggly edge in $G_S$ and $c_e=0$, otherwise. Setting
    \begin{align*}
        \mathcal{H}_5 \coloneqq \{w \in \RR^{[n]  \cup E} ~:~w^\top c=0\},
    \end{align*}
    we see that $p$ and $q$ both lie in $\mathcal{H}_5^+$, which shows that $F$ is not visible from $q$.

    Suppose that $G_S$ contains at least two white nodes. Using \Cref{thm: cycle triangulation}, it follows, that going from $v$ in counterclockwise direction, one of the following four cases occurs (where $p$ is depicted in red):
    \begin{center}
        \begin{tabular}{c|c}
            \begin{tikzpicture}
            \node[] (a) at (0.75,0.75) {,};
            \node[] (b) at (0.75,-0.75) {,};
            \node[] (2) at (-0.6,0.75) {};
            \node[] (3) at (0.65,0.75) {};
            \node[] (4) at (0.85,0.75) {};
            \node[] (5) at (2.1,0.75) {};
            \node[] (6) at (-0.6,-0.75) {};
            \node[] (7) at (0.65,-0.75) {};
            \node[] (8) at (0.85,-0.75) {};
            \node[] (9) at (2.1,-0.75) {};

            \draw[dotted] (0.1,-0.1) rectangle (1.4,0.3);
            \node[black,inner sep=3pt] (0) at (0, 0) {};
            \node[white, draw=red, inner sep=3pt] (1) at (1.5, 0) {};
            
            \draw[squiggle] (2) to (3);
            \draw[<-] (4) to (5);
            \draw[<-] (0) to [bend left] (1);
            \draw (0) to (1);
            \draw[squiggle] (6) to (7);
            \draw[-] (8) to (9);
            
            \node[black,inner sep=3pt] (10) at (-2, 0) {};
            \node[black,inner sep=3pt] (11) at (-4, 0) {};

            \node[] (13) at (-1,0.3) {$B$};
            
            \draw[dotted] (0) to (10);
            
            \draw[->] (10) to ($(10)+(-0.9,0)$);
            \draw[squiggle] ($(10) + (-1.1,0)$) to (11);
            \node[] (15) at (-3,0) {,};

            \node at (0,-0.4) {\small 1};
            \node at (0,1.1) {\small 2};
            \node at (1.5, 1.1) {\small 0};
            \node at (0.75,-0.3) {\small 0};
            \node at (0, -1.1) {\small 2};
            \node at (1.5, -1.1) {\small 0};
            \node at (1.5, -0.4) {\small 1};

            \node at (-2, -0.4) {\small 1};
            \node at (-2.5, -0.4) {\small -1};
            \node at (-3.5, -0.4) {\small 1};
            \node at (-4, -0.4) {\small 0};
            \end{tikzpicture}
            &
            \begin{tikzpicture}
                \node[] (a) at (0.75,0.75) {,};
            \node[] (b) at (0.75,-0.75) {,};
            \node[] (2) at (-0.6,0.75) {};
            \node[] (3) at (0.65,0.75) {};
            \node[] (4) at (0.85,0.75) {};
            \node[] (5) at (2.1,0.75) {};
            \node[] (6) at (-0.6,-0.75) {};
            \node[] (7) at (0.65,-0.75) {};
            \node[] (8) at (0.85,-0.75) {};
            \node[] (9) at (2.1,-0.75) {};

            \draw[dotted] (0.1,-0.1) rectangle (1.4,0.3);
            \node[black,inner sep=3pt] (0) at (0, 0) {};
            \node[white, draw=red, inner sep=3pt] (1) at (1.5, 0) {};
            
            \draw[squiggle] (2) to (3);
            \draw[<-] (4) to (5);
            \draw[<-] (0) to [bend left] (1);
            \draw (0) to (1);
            \draw[squiggle] (6) to (7);
            \draw[-] (8) to (9);
            
            \node[black,inner sep=3pt] (10) at (-2, 0) {};
            \node[white,inner sep=3pt] (11) at (-4, 0) {};

            \node[] (13) at (-1,0.3) {$B$};
            
            \draw[dotted] (0) to (10);
            
            \draw (10) to ($(10)+(-0.9,0)$);
            \draw[squiggle] ($(10) + (-1.1,0)$) to (11);
            \node[] (15) at (-3,0) {,};

            \node at (0,-0.4) {\small 1};
            \node at (0,1.1) {\small 2};
            \node at (1.5, 1.1) {\small 0};
            \node at (0.75,-0.3) {\small 0};
            \node at (0, -1.1) {\small 2};
            \node at (1.5, -1.1) {\small 0};
            \node at (1.5, -0.4) {\small 1};

            \node at (-2, -0.4) {\small 1};
            \node at (-2.5, -0.4) {\small 0};
            \node at (-3.5, -0.4) {\small 1};
            \node at (-4, -0.4) {\small 0};
            \end{tikzpicture} \\ \hline
            
            \begin{tikzpicture}
            
            \node[white, draw=red,inner sep=3pt] (10) at (-2, 0) {};
            \node[white,inner sep=3pt] (11) at (-4, 0) {};
            
            \draw (10) to ($(10)+(-0.9,0)$);
            \draw[squiggle] ($(10) + (-1.1,0)$) to (11);
            \node[] (15) at (-3,0) {,};

            \draw[white, draw=white] (-3,0.5) circle (2pt);

            \node at (-2, -0.4) {\small 1};
            \node at (-2.5, -0.4) {\small 0};
            \node at (-3.5, -0.4) {\small 1};
            \node at (-4, -0.4) {\small 0};
            \end{tikzpicture}
            &
            \begin{tikzpicture}
            \node[white, draw=red,inner sep=3pt] (10) at (-2, 0) {};
            \node[black,inner sep=3pt] (11) at (-4, 0) {};
            
            \draw[->] (10) to ($(10)+(-0.9,0)$);
            \draw[squiggle] ($(10) + (-1.1,0)$) to (11);
            \node (15) at (-3,0) {,};

            \node at (-2, -0.4) {\small 1};
            \node at (-2.5, -0.4) {\small -1};
            \node at (-3.5, -0.4) {\small 1};
            \node at (-4, -0.4) {\small 0};
            \end{tikzpicture}
        \end{tabular}
    \end{center}
    
    Similarly, going from $v$ in clockwise direction, the local picture is one of the following two (where $p$ is depicted in red):
    \begin{center}
        \begin{tabular}{c|c}
            \begin{tikzpicture}
            \draw[white,draw=white] (-3,-1.25) circle (2pt);
            
            \node[gray,inner sep=3pt] (10) at (-2, 0) {};
            \node[white,draw=red,inner sep=3pt] (11) at (-4, 0) {};
            
            \draw (10) to ($(10)+(-0.9,0)$);
            \draw[squiggle] ($(10) + (-1.1,0)$) to (11);
            \node[] (15) at (-3,0) {,};

            \node at (-2, -0.4) {\small 0};
            \node at (-2.5, -0.4) {\small 0};
            \node at (-3.5, -0.4) {\small 1};
            \node at (-4, -0.4) {\small 1};
            \end{tikzpicture}
            &
            \begin{tikzpicture}
            
            \node[black, inner sep=3pt] (1) at (1.5, 0) {};
            \node[] (a) at (0.75,0.75) {,};
            \node[] (b) at (0.75,-0.75) {,};
            \node[] (2) at (-0.6,0.75) {};
            \node[] (3) at (0.65,0.75) {};
            \node[] (4) at (0.85,0.75) {};
            \node[] (5) at (2.1,0.75) {};
            \node[] (6) at (-0.6,-0.75) {};
            \node[] (7) at (0.65,-0.75) {};
            \node[] (8) at (0.85,-0.75) {};
            \node[] (9) at (2.1,-0.75) {};

            \draw[dotted] (0.1,-0.1) rectangle (1.4,0.3);
            \node[white,draw=red,inner sep=3pt] (0) at (0, 0) {};
            
            \draw[squiggle] (2) to (3);
            \draw (4) to (5);
            \draw[->] (0) to [bend left] (1);
            \draw (0) to (1);
            \draw[squiggle] (6) to (7);
            \draw[->] (8) to (9);
            
            \node[black,inner sep=3pt] (10) at (3.5, 0) {};
            \node[gray,inner sep=3pt] (11) at (5.5, 0) {};

            \node[] (13) at (2.5,0.3) {$A$ or $B$};
            
            \draw[dotted] (1) to (10);
            
            \draw[squiggle] (10) to ($(10)+(0.9,0)$);
            \draw[<-] ($(10) + (1.1,0)$) to (11);
            \node[] (15) at (4.5,0) {,};

            \node at (0,-0.4) {\small 1};
            \node at (0,1.1) {\small 2};
            \node at (1.5, 1.1) {\small 0};
            \node at (0.75,-0.3) {\small 0};
            \node at (0, -1.1) {\small 2};
            \node at (1.5, -1.1) {\small 0};
            \node at (1.5, -0.4) {\small 1};

            \node at (3.5, -0.4) {\small 1};
            \node at (5.5, -0.4) {\small 0};
            \node at (4, -0.4) {\small 1};
            \node at (5, -0.4) {\small 1};
        \end{tikzpicture}
        \end{tabular}
        \end{center} 
        In particular, we get eight different local situations by combining a picture from above and below. In each of these cases, we define a hyperplane in the same way as we did before, using the labels from the combined pictures. Again depicted multi-edges without labels are treated as in Case 2 (and all other cases). It is easy to verify that in all cases, $p$ and $q$ lie on the same side of this hyperplane and hence, $F$ is never visible from $q$.

As $q$ was not contained in any of the defined hyperplanes, it is indeed a point in general position with respect to any facet of $\Ta$.
\end{proof}

As a direct consequence of \Cref{lem: visibility of cycle with multi-edges} we get the following simple expression for the number of visible facets of a facet $S$ of $\Ta$:

\begin{corollary}
    \label{cor: visibility characterization}
    Let $S$ be a facet of $\Ta$ and let $q \in \RR^{[n]\cup E(C_{\mathbf{a}})}$ as defined above. 
    Then the number of facets of $S$ that are visible from $q$ is equal to the number of edges in $G_S$ that are squiggly edges or double edges. 
\end{corollary}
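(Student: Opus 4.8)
The plan is to read the count off directly from \Cref{lem: visibility of cycle with multi-edges}. Since $\Ta$ is a unimodular triangulation, $S$ is a simplex of dimension $\dim\C_{C_{\mathbf{a}}}$, so it has exactly $\dim\C_{C_{\mathbf{a}}}+1 = n + a_1 + \cdots + a_n$ vertices, and the facets of the simplex $S$ are in bijection with its vertices, a facet $F$ of $S$ corresponding to the unique vertex $p$ of $S$ not contained in $F$. Hence the number of facets of $S$ visible from $q$ equals the number of vertices $p$ of $S$ such that the facet of $S$ opposite $p$ is visible, and \Cref{lem: visibility of cycle with multi-edges} pins down exactly which $p$ these are.

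Next I would sort the vertices of $S$ according to the graphical type of the corresponding lattice point in $G_S$: white nodes $z_i$; single undirected, clockwise, counter-clockwise, or squiggly edges; and, for every double edge of $G_S$, the two vertices given by its undirected incarnation and its directed incarnation. By the last sentence of \Cref{lem: visibility of cycle with multi-edges} together with Case~1 of its proof, white nodes and single edges that are not squiggly never contribute a visible facet, while by part~(4) every squiggly (necessarily single) edge contributes exactly one. It therefore remains to argue that each double edge of $G_S$ contributes exactly one visible facet, i.e.\ that among the two vertices of $S$ associated to it, precisely one has a visible opposite facet.

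This last point follows by combining \Cref{thm: cycle triangulation} with parts~(1)--(3) of \Cref{lem: visibility of cycle with multi-edges}: a double edge occurs inside a multi-edge of type $A$, or inside a multi-edge of type $B$, and in the latter case either before or after the unique type-$C$ multi-edge of the segment $G_{i_t,i_{t+1}}$ containing it. In the type-$A$ case, part~(1) makes the clockwise-directed vertex visible and the undirected one not; in the type-$B$ cases, part~(2) or part~(3) makes exactly one of the two vertices visible (the counter-clockwise-directed one if the multi-edge lies after the type-$C$ multi-edge, the undirected one if it lies before), the other not. In each of these finitely many local configurations exactly one of the two vertices yields a visible facet. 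Summing over all vertices of $S$, the number of visible facets equals the number of squiggly edges of $G_S$ plus the number of double edges of $G_S$; since an edge of $C_{\mathbf{a}}$ appears in $G_S$ either as a single edge of a single type or as a double edge, these two counts run over disjoint sets and their sum is precisely the number of edges of $G_S$ that are squiggly edges or double edges.

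The only genuinely delicate step is the verification that each double edge contributes exactly one — not zero and not two — visible facet, which amounts to matching each local configuration described by \Cref{thm: cycle triangulation} with the correct clause of \Cref{lem: visibility of cycle with multi-edges}. This is a short finite check over the handful of local pictures already appearing in the proof of that lemma, and it involves no real difficulty beyond bookkeeping.
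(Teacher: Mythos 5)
Your proposal is correct and matches the paper's intent exactly: the paper states this corollary as a direct consequence of \Cref{lem: visibility of cycle with multi-edges} with the bookkeeping omitted, and your argument supplies precisely that bookkeeping (facets of the simplex $S$ biject with its vertices, squiggly single edges each contribute one visible facet, and each double edge contributes exactly one because precisely one of its two incarnations falls under clauses (1)--(3) of the lemma). The only caveat is that clause (3) of the lemma must be read, as you do and as its proof makes clear, as applying only to type-$B$ double edges preceding the type-$C$ multi-edge, not to the undirected incarnation of a type-$A$ double edge.
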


The next lemma shows that a facet of $\Tc_\textbf{a}$ is uniquely determined by its oriented double edges and its squiggly edges.

\begin{lemma}\label{lem: edge arrangements}
    Let $S$ be a facet of $\Ta$ and let $S'\subseteq S$ be the set of the squiggly and double edges in $S$.
    Then $S$ is the unique facet of $\Ta$ whose set of squiggly and double edges equals $S'$.
Moreover, any set of double edges and squiggle edges such that each multi-edge contains at most one double edge, can be extended to a facet of $\Ta$, if and only if there is at least one multi-edge without any double edge.
\end{lemma}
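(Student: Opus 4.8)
The plan is to route everything through the structural description of facets in \Cref{thm: cycle triangulation} together with \Cref{lem: induced G_S on one multi edge}. The first observation I would record is that the data ``squiggly edges plus oriented double edges'' already determines almost all of a facet: by \Cref{lem: induced G_S on one multi edge} and the two cases of \Cref{thm: cycle triangulation}, a multi-edge of type $A$ (resp.\ $B$) is reconstructed from its clockwise (resp.\ counterclockwise) double edge together with the list of its squiggly edges, since every remaining edge below the double edge is clockwise (resp.\ undirected) and every remaining edge above it is undirected (resp.\ counterclockwise); and a type-$C$ multi-edge is reconstructed from its squiggly edges once one knows whether its arc is of type $1$ or type $2$ (the non-squiggly edges then being all counterclockwise, resp.\ all undirected). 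Since the type ($A$, $B$, or $C$) of each multi-edge is visible from $S'$ (type $C$ being exactly ``no double edge''), the whole reconstruction problem reduces to recovering $Z_S$ and the arc type of each type-$C$ multi-edge.

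For the uniqueness statement I would show that $Z_S$ is forced. Recall from the proof of \Cref{thm: cycle triangulation} that each arc $G_{i_t,i_{t+1}}$ contains exactly one type-$C$ multi-edge; hence there are exactly $k=|Z_S|$ of them, and reading clockwise the white nodes and the type-$C$ multi-edges alternate. Between a type-$C$ multi-edge and the next white node all multi-edges are of type $B$ (they are the ``remaining multi-edges'' in either case of \Cref{thm: cycle triangulation}), while between a white node and the next type-$C$ multi-edge there are either no multi-edges (type-$2$ arc) or the first one is of type $A$ (type-$1$ arc). Therefore, reading clockwise between two consecutive type-$C$ multi-edges, the string of type-$A$ and type-$B$ multi-edges one passes decomposes uniquely as ``(possibly no) type-$B$ multi-edges, then the white node, then either nothing or one type-$A$ multi-edge followed by arbitrary type-$A$/type-$B$ multi-edges''; concretely, the white node sits immediately before the first type-$A$ multi-edge if there is one, and immediately before the next type-$C$ multi-edge otherwise. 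This pins $Z_S$ down from the multi-edge types alone (hence from $S'$), and the arc containing a given type-$C$ multi-edge $c$ is then of type $2$ precisely when $c$ is immediately preceded by a white node and of type $1$ otherwise. So $S$ is recovered from $S'$.

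For the ``only if'' half of the second statement, suppose $D$ and $Q$ are the double-edge set (with orientations) and squiggly-edge set of an actual facet $S$. I would count lattice points: by \Cref{lem: induced G_S on one multi edge} a multi-edge with a double edge meets $S$ in one lattice point more than its multiplicity and one without meets $S$ in exactly its multiplicity, so $|Z_S| + \sum_{i=1}^n a_i + |D| = |S| = \dim\C_{C_{\mathbf{a}}} + 1 = n + \sum_{i=1}^n a_i$, i.e.\ $|Z_S| = n - |D|$. As $Z_S \neq \emptyset$ by \Cref{thm: cycle triangulation}, $|D| < n$, so at least one multi-edge carries no double edge.

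Conversely, assume some multi-edge carries no edge of $D$, and let $c_1,\dots,c_k$, $k\ge 1$, be these multi-edges in clockwise order. I would declare each $c_t$ of type $C$, each multi-edge with a clockwise edge of $D$ of type $A$, and each with a counterclockwise edge of $D$ of type $B$; place white nodes $i_1,\dots,i_k$ by the rule extracted in the uniqueness part (between $c_{t-1}$ and $c_t$, just before the first type-$A$ multi-edge, or just before $c_t$ if there is none), which simultaneously assigns a type to each arc; and then build $S$ by taking $z_i$ for $i \in \{i_1,\dots,i_k\}$, for each type-$A$/type-$B$ multi-edge its double edge from $D$ with squiggly edges exactly those in $Q$ and the remaining edges oriented as forced by their position relative to the double edge, and for each $c_t$ its squiggly edges being exactly $Q\cap c_t$ and its remaining edges all counterclockwise or all undirected according to the arc type. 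By construction $Z_S \neq \emptyset$, and the placement rule makes each $G_{i_t,i_{t+1}}$ match one of the two admissible shapes of \Cref{thm: cycle triangulation}, so $S$ is a facet whose squiggly and double edges are exactly $Q\cup D$. The point I expect to require the most care is precisely this last verification — that the construction lands in one of the two admissible arc shapes in every degenerate configuration (for instance $k=1$, an arc with no multi-edge strictly between its white node and its type-$C$ multi-edge, or a type-$C$ multi-edge all of whose edges lie in $Q$) — which amounts to following the placement rule through the finitely many cases.
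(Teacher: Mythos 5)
Your proof is correct and follows essentially the same reconstruction strategy as the paper's: the multi-edge types $A$, $B$, $C$ are read off from the double edges, white nodes are recovered via the ``starting node of the first type-$A$ or type-$C$ multi-edge encountered clockwise after each type-$C$ multi-edge'' rule (which coincides with your placement rule), and the arc type and remaining single edges are then forced. The one place you diverge is the ``only if'' half of the moreover, where you substitute the lattice-point count $|Z_S| = n - |D| > 0$ for the paper's brief appeal to the fact that otherwise $S'$ would contain the leading term of a cyclic binomial; both are valid, and your ``if'' direction elaborates an explicit construction that the paper compresses into the phrase ``follows from our construction.''
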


\begin{proof}
We show that $S$ can be uniquely reconstructed from $S'$. We first note that by \Cref{lem: induced G_S on one multi edge} the double edges uniquely determine if a multi-edge is of type $A$ or $B$. Knowing the squiggly edges in such a multi-edge determines the other single edges uniquely. It hence remains to reconstruct edges of type $C$ and white nodes. 

Since we know all multi-edges of type $A$ and $B$, also the location of the multi-edges of type $C$ are determined. Starting at such a multi-edge of type $C$ and going in clockwise direction, we traverse the cycle until we see a multi-edge of type $A$ or another multi-edge of type $C$ for the first time. The starting node of this multi-edge (in clockwise direction) has to be a white node by \Cref{thm: cycle triangulation}. In this way, white nodes are uniquely determined. Since, finally, it is now also determined if each induced subgraph falls into the facets in \Cref{thm: cycle triangulation} (1) or (2), the edges of type $C$ are uniquely determined. This shows the first claim.

The "moreover" follows from our construction and the fact that otherwise, $S'$ would contain the leading term of a cyclic binomial.
\end{proof}

Combining \Cref{cor: visibility characterization} and \Cref{lem: edge arrangements}, we may explicitly compute the $h^\ast$-polynomial of $\C_{C_{\mathbf{a}}}$.

\begin{theorem}
    \label{thm: h-star of cycle with multiedges}
    Let $\mathbf{a}=(a_1,\ldots,a_n)\in \mathbb{Z}^n$ with $a_i\geq 1$ for all $1\leq i\leq n$. The $h^\ast$-polynomial of $\C_{C_{\mathbf{a}}}$ is given by
    \[
        h^\ast(\C_{C_{\mathbf{a}}}; z) = \prod_{i=1}^n ((1 + z)^{a_i} + 2a_iz(1+z)^{a_i - 1}) - \prod_{i=1}^n 2a_iz(1+z)^{a_i - 1}.
    \]
\end{theorem}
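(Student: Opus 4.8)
The plan is to apply the visibility formula for $h^\ast$-polynomials (\Cref{thm: visibility formula for h star}) to the unimodular triangulation $\Ta$ and the point $q$ introduced just before \Cref{lem: visibility of cycle with multi-edges}. By \Cref{cor: unimodular triangulation}, $\Ta$ is a regular unimodular triangulation of $\C_{C_{\mathbf{a}}}$, and $q$ lies in $\C_{C_{\mathbf{a}}}$ in general position with respect to it (the latter being checked at the end of the proof of \Cref{lem: visibility of cycle with multi-edges}). Hence
\[
h^\ast(\C_{C_{\mathbf{a}}};z)=\sum_{S}z^{\vert\mathrm{vis}_q(S)\vert},
\]
the sum running over all facets $S$ of $\Ta$. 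By \Cref{cor: visibility characterization}, $\vert\mathrm{vis}_q(S)\vert$ equals the number of edges of $C_{\mathbf{a}}$ that appear in $G_S$ as a squiggly edge plus the number that appear as a double edge, and these two sets of edges are disjoint.

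The next step is to reparametrize the facets by \Cref{lem: edge arrangements}: sending a facet $S$ to the pair (set of oriented double edges of $S$, set of squiggly edges of $S$) is a bijection onto the collection of configurations in which each multi-edge of $C_{\mathbf{a}}$ is assigned exactly one of the following local states --- (type $C$) no double edge, and an arbitrary subset of its $a_i$ edges marked squiggly; or (type $A$/$B$) one of its $a_i$ edges marked a double edge with a choice of clockwise or counter-clockwise orientation, and an arbitrary subset of the remaining $a_i-1$ edges marked squiggly --- subject only to the global constraint that at least one multi-edge be of type $C$. That these are precisely the admissible local states is \Cref{lem: induced G_S on one multi edge} together with the two pictures following it, and the fact that any such configuration satisfying the global constraint extends to a (necessarily unique) facet is the ``moreover'' of \Cref{lem: edge arrangements}. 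Since $\vert\mathrm{vis}_q(S)\vert$ depends on $S$ only through this data --- it is the total number of marked squiggly edges plus the total number of marked double edges --- we may replace the sum above by a sum of $z^{(\#\,\mathrm{squiggly})+(\#\,\mathrm{double})}$ over all such configurations.

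It remains to evaluate this sum. Ignoring the global constraint, it factors over the $n$ multi-edges, the $i$-th factor being
\[
\sum_{Q\subseteq[a_i]}z^{\vert Q\vert}\;+\;\sum_{j=1}^{a_i}\sum_{\varepsilon\in\{\mathrm{cw},\mathrm{ccw}\}}\sum_{Q\subseteq[a_i]\setminus\{j\}}z^{1+\vert Q\vert}=(1+z)^{a_i}+2a_iz(1+z)^{a_i-1},
\]
where the first summand accounts for type $C$ and the second for types $A$/$B$. The configurations violating the global constraint are exactly those in which every multi-edge is of type $A$/$B$, contributing $\prod_{i=1}^n 2a_iz(1+z)^{a_i-1}$. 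Subtracting this from $\prod_{i=1}^n\bigl((1+z)^{a_i}+2a_iz(1+z)^{a_i-1}\bigr)$ gives the asserted formula.

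Most of the substance has already been established in the cited lemmas, so I expect no real obstacle beyond bookkeeping. The two points that need genuine care are: that $\vert\mathrm{vis}_q(S)\vert$ really is a function of the oriented-double-plus-squiggly-edge data (immediate from \Cref{cor: visibility characterization}); and that the family to be subtracted is precisely the configurations with no type-$C$ multi-edge, which is exactly what the ``moreover'' in \Cref{lem: edge arrangements} provides. The one place where it is easy to slip is in recognizing that the orientation of a double edge is a free binary choice --- it is part of the data of an \emph{oriented} double edge, and by \Cref{lem: edge arrangements} every such choice completes uniquely once some multi-edge has been left without a double edge --- which is why the per-multi-edge factor carries $2a_i$ rather than $a_i$.
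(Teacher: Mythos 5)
Your proof is correct and follows essentially the same route as the paper: both compute the $h^\ast$-polynomial via the visibility statistic of \Cref{thm: visibility formula for h star}, identify the visible-facet count with the squiggly-plus-double edge count through \Cref{cor: visibility characterization}, use \Cref{lem: edge arrangements} to parametrize facets by that data, factor the resulting generating function over the multi-edges, and subtract the configurations with a double edge in every multi-edge. The only cosmetic difference is that you write the per-multi-edge factor directly as a sum of monomials over configurations, whereas the paper first computes the coefficient $v_{i,k}=\binom{a_i}{k}+2a_i\binom{a_i-1}{k-1}$ and then sums; the computation is identical.
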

\begin{proof}
    On the one hand, by \Cref{lem: edge arrangements}, a facet $S$ of $\Ta$ is uniquely determined by its set of double edges and squiggly edges. On the other hand, the cardinality of the latter set is equal to the number of visible facets of $S$ by \Cref{cor: visibility characterization}. To compute $h_k^\ast(\C_{C_{\mathbf{a}}})$ we hence need to count the number of facets of $\Ta$ having $k$ edges that are double or squiggly by \Cref{thm: visibility formula for h star}.

    Instead of counting these facets directly, we first count a slightly larger set. Namely, all sets $S'$ consisting of exactly $k$ edges that are double or squiggly such that for each multi-edge at most one double edge occurs. We then need to subtract the number of those that cannot be completed to a facet of $\Ta$.

First note that variables in such a set $S'$ corresponding to different multi-edges of $C_\textbf{a}$ can be chosen independently. For $i \in [n]$ and $0\leq k\leq a_i$, we let $v_{i,k}$ denote the number of possibilities to choose $k$ edges of the multi-edge between $i$ and $i+1$ that are squiggly or double with at most one double edge. If none of the edges of the multi-edge between $i$ and $i+1$ is a double, there are $\binom{a_i}{k}$ possibilities to choose $k$ edges of this multi-edge that are squiggly. Similarly, there are $2a_i \binom{a_i-1}{k-1}$ possibilities to choose one double edge (for which we have two possible orientations) and $k-1$ squiggle edges from the multi-edge between $i$ and $i+1$. This yields:
\[
v_{i,k}=\binom{a_i}{k}+2a_i\binom{a_i-1}{k-1},
\]
where we set $\binom{a_i-1}{-1} \coloneqq 0$. We now define
    \begin{align*}
        v_i(z) \coloneqq& \sum_{k=0}^{a_i}v_{i,k}z^k=\sum_{k=0}^{a_i}\left(\binom{a_i}{k}+2a_i\binom{a_i-1}{k-1} \right)z^k \\
        =& \sum_{k=0}^{a_i}\binom{a_i}{k}z^k+2a_i\sum_{k=0}^{a_i-1}\binom{a_i-1}{k}z^{k+1} \\
        =&(1+z)^{a_i}+2a_iz(1+z)^{a_i-1}.
    \end{align*}
    Hence, the coefficient of $z^k$ in the polynomial
    \begin{align*}
        v(z) \coloneqq \prod_{i=1}^n v_i(z)
    \end{align*}
    counts the number of possible sets $S'$ consisting of exactly $k$ edges that are double or squiggly such that for each multi-edge at most one double edge occurs.
    By \Cref{lem: edge arrangements} these are those sets $S'$ with one double edge for each multi-edge of $C_\textbf{a}$. As argued above, there are $2a_i\binom{a_i-1}{k-1}$ possibilities that one edge of the multi-edge between $i$ and $i+1$ is a double edge and $k-1$ edges of this multi-edge are squiggly edges and this is counted by the coefficient of $z^k$ in the polynomial $2a_iz(1+z)^{a_i-1}$. For the whole graph this means the coefficient of $z^k$ in the polynomial
    \begin{align*}
        \prod_{i=1}^{n} 2a_iz(1+z)^{a_i-1}
    \end{align*}
    counts the number of sets $S'$ of cardinality $k$ that contain a double edge for each multi-edge and $k-n$ squiggly edges. We finally conclude that  the number of facets of $\Ta$ with exactly $k$ visible facets is given by the coefficient of $z^k$ of the polynomial
    \begin{align*}
        \prod_{i=1}^n ((1+z)^{a_i}+2a_iz(1+z)^{a_i - 1}) - \prod_{i=1}^n 2a_iz(1+z)^{a_i-1},
    \end{align*}
    and by \Cref{thm: visibility formula for h star} this is exactly the $h^\ast$-polynomial of $\C_{C_{\mathbf{a}}}$.
\end{proof}

Since the normalized volume $\Vol(P)$ of a lattice polytope $P$ is given by $h^\ast(P;1)$, we directly get the following.

\begin{corollary}
	 Let $\mathbf{a}=(a_1,\ldots,a_n)\in \mathbb{Z}^n$ with $a_i\geq 1$ for all $1\leq i\leq n$. The normalized volume of $\C_{C_{\mathbf{a}}}$ is given by
	\[
		\Vol(\C_{C_{\mathbf{a}}})=\prod_{i=1}^n(1+a_i)2^{a_i}-\prod_{i=1}^n a_i2^{a_i}.
	\]
\end{corollary}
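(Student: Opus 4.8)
The plan is immediate: the corollary follows by evaluating the $h^\ast$-polynomial from \Cref{thm: h-star of cycle with multiedges} at $z = 1$, using the identity $\Vol(P) = h^\ast(P;1)$ recorded in \Cref{sec:ehrhart}. Concretely, for each $i \in [n]$ the factor $(1+z)^{a_i} + 2a_i z(1+z)^{a_i-1}$ appearing in the first product of that formula specializes at $z=1$ to $2^{a_i} + 2a_i \cdot 2^{a_i-1} = (1+a_i)2^{a_i}$, while the factor $2a_i z(1+z)^{a_i-1}$ in the second product specializes to $2a_i \cdot 2^{a_i-1} = a_i 2^{a_i}$. Multiplying these over $i = 1, \dots, n$ gives exactly $\prod_{i=1}^n (1+a_i)2^{a_i} - \prod_{i=1}^n a_i 2^{a_i}$, as desired.

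Since every step is an elementary substitution, there is no genuine obstacle: the content lies entirely in \Cref{thm: h-star of cycle with multiedges}, and the present statement is a one-line specialization. (Alternatively, one could bypass the $h^\ast$-polynomial and count the maximal simplices of the unimodular triangulation $\Ta$ directly using \Cref{thm: cycle triangulation} and \Cref{lem: edge arrangements}, since $\Vol$ equals the number of such simplices for a unimodular triangulation; but this merely reproduces the same computation.) As a consistency check, the simple $n$-cycle, corresponding to $\mathbf{a} = (1, \dots, 1)$, yields $\Vol(\C_{C_{\mathbf{a}}}) = 4^n - 2^n$, recovering the normalized volume formula of \cite{jsl-cosmo} for simple cycles.
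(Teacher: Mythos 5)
Your proof is correct and matches the paper's approach exactly: the corollary is obtained by evaluating the $h^\ast$-polynomial of \Cref{thm: h-star of cycle with multiedges} at $z=1$ and using $\Vol(P)=h^\ast(P;1)$, and your arithmetic specializations of the two products are right.
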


\begin{example}
    Let $C$ be a simple cycle of length $n$. Using \Cref{thm: h-star of cycle with multiedges}, we get
    \begin{align*}
        h^\ast(\C_{C};z)=(1+3z)^n-(2z)^n \text{ and } \Vol(\C_{C})=4^n-2^n.
    \end{align*}
    This generalizes \cite[Theorem 4.2]{jsl-cosmo} where the normalized volume of $\C_C$ was computed.
\end{example}

\subsection{Multitrees}
\label{subsec: tree polynomial}

\begin{figure}
    \centering
    \begin{tikzpicture}[scale=0.9]
        \coordinate (1) at (0,0);
        \coordinate (2) at (1,0);
        \coordinate (3) at (1,1);
        \coordinate (4) at (1,-1);
        \coordinate (5) at (2,0);
        \coordinate (6) at (135:1);
        \coordinate (7) at (225:1);

        \draw (7) -- (1) -- (2) -- (3);
        %\draw (6) -- (1);
        \draw (5) -- (2) -- (4);

        \path [bend left=70] (1) edge (2);
        \path[bend left] (1) edge (2);
        \path [bend right=70] (1) edge (2);
        \path[bend right] (1) edge (2);
        \path [bend left] (1) edge (6);
        \path [bend left] (1) edge (7);
        \path [bend right] (1) edge (6);
        \path [bend right] (1) edge (7);
        \path [bend left] (2) edge (3);
        \path [bend right] (2) edge (3);
        \path [bend left] (2) edge (5);
        \path [bend right] (2) edge (5);

        \fill (1) circle (2pt);
        \fill (2) circle (2pt);
        \fill (3) circle (2pt);
        \fill (4) circle (2pt);
        \fill (5) circle (2pt);
        \fill (6) circle (2pt);
        \fill (7) circle (2pt);

    \end{tikzpicture}
    \caption{Example for a multitree of type $(1,2,3,3,3,5)$.}
    \label{fig: example multitree}
\end{figure}
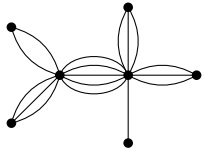

Given a positive integer $n$ and $\mathbf{a}\in\mathbb{Z}^n$ with $a_i\geq 1$ for all $1\leq i\leq n$. A \emph{multitree} of type $\mathbf{a}$ is a connected graph on vertex set $[n+1]$ whose edge set consists of $n$ multi-edges $F_1,\ldots,F_{n}$ where $F_i$ has multiplicity $a_i$, see \Cref{fig: example multitree} for an example. Alternatively a multitree of type $\mathbf{a}$ can be obtained as $1$-sum of multi-edges of multiplicity $a_1,\ldots,a_n$. As such, \Cref{thm: 1-sum} implies that the $h^\ast$-polynomial of the cosmological polytope of any multitree $T$ of type $\mathbf{a}$ does only depend on $\mathbf{a}$. More precisely,
\[
h^\ast(\C_T;z)=\prod_{i=1}^nh^\ast(\C_{I_{a_i}};z),
\]
where $I_{a_i}$ denotes the graph on vertex set $\{1,2\}$ and a multi-edge of multiplicity $a_i$ between $1$ and $2$.
It hence suffices to compute  the $h^\ast$-polynomial of a multi-edge. 

We now state the main result of this section:

\begin{theorem}\label{thm:Ehrhart multitree}
Let $n$ be a positive integer and let $\mathbf{a}\in\mathbb{Z}^n$ with $a_i\geq 1$ for all $1\leq i\leq n$. Let $T$ be a multitree of type $\mathbf{a}$. Then
    \begin{equation*}
        h^\ast(\C_{T};z) = \prod_{i=1}^n \left((1+z)^{a_i} +2a_iz(1+z)^{a_i-1}\right).
    \end{equation*}
\end{theorem}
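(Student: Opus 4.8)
The plan is to reduce the statement to the case of a single multi-edge and then imitate, in a much simpler form, the analysis carried out for multicycles in \Cref{subsec: cycles}. Since a multitree of type $\mathbf{a}$ is a $1$-sum of the graphs $I_{a_1},\dots,I_{a_n}$, \Cref{thm: 1-sum} gives $h^\ast(\C_T;z)=\prod_{i=1}^n h^\ast(\C_{I_{a_i}};z)$, as already noted before the statement; hence it suffices to prove that for $I_a$, the graph on two vertices $1,2$ joined by a multi-edge of multiplicity $a$, one has $h^\ast(\C_{I_a};z)=(1+z)^a+2az(1+z)^{a-1}$.

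To compute $h^\ast(\C_{I_a};z)$ I would put on $R_{I_a}$ the good term order obtained by restricting the one from \Cref{subsec: cycles} (first all $\overrightarrow{y}$-variables, then all $\overleftarrow{y}$-variables, then the edge $z$-variables, then the $t$-variables, then $z_1>z_2$, then lex) and take the regular unimodular triangulation $\mathcal{T}$ from \Cref{cor: unimodular triangulation}. The essential simplification is that $I_a$ has no path of length at least $2$ and no cycle of length at least $3$, so the only obstructions are the fundamental ones (\Cref{fig: fundamental obstructions}) and the length-$2$ cyclic ones (\Cref{fig:cyclic}): there are neither zig-zag nor long-cycle obstructions. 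Reusing the local analysis of \Cref{lem: induced G_S on one multi edge} (whose proof uses only the obstructions local to a multi-edge and is therefore equally valid for $I_a$), together with the fundamental obstructions $y_{12f}z_2$ and $y_{21f}z_1$ to pin down the white nodes, one shows that a set $S$ of $a+2$ variables is a facet of $\mathcal{T}$ if and only if exactly one of the following holds: (i) $S$ contains no double edge, both $z_1$ and $z_2$, and every edge of the multi-edge is undirected or squiggly; or (ii) $S$ contains a unique oriented double edge, exactly one of $z_1,z_2$ (namely $z_1$ if the directed copy points from $1$ to $2$, and $z_2$ otherwise), and its single edges are constrained as in case (A) respectively (B) of \Cref{lem: induced G_S on one multi edge}. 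As in \Cref{lem: edge arrangements}, such a facet is uniquely recovered from its set of squiggly and double edges; and, in contrast with the multicycle situation, \emph{every} collection of squiggly edges together with at most one oriented double edge extends to a facet: since $I_a$ has two vertices but only one multi-edge, a white-node slot always remains even when a double edge is present, so no cyclic obstruction is forced -- this is exactly why the product that is subtracted in \Cref{thm: h-star of cycle with multiedges} does not appear here.

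Next I would choose a point $q$ in the relative interior of $\C_{I_a}$ with all vertex coordinates equal, all edge coordinates equal and small, and $\sum_\alpha q_\alpha=1$, and check as in \Cref{lem: visibility of cycle with multi-edges} that $q$ lies on no hyperplane spanned by a facet of any maximal simplex of $\mathcal{T}$, hence is in general position. A case distinction on the kind of a vertex $p$ of a facet $S$, parallel to but much shorter than the one in \Cref{lem: visibility of cycle with multi-edges}, then yields the analogue of \Cref{cor: visibility characterization}: the number of facets of $S$ visible from $q$ equals the number of squiggly edges of $S$ plus one if $S$ contains a double edge and zero otherwise. For a single squiggly, undirected, or directed edge $p$ this is immediate from the coordinate hyperplane $\{w_f=0\}$ (as in Case 1 there); for $p$ a white node it is not visible (as in Case 5); and for $p$ one of the two vertices spanning a double edge one uses an explicit normal vector as in Cases 2--4, exactly one of the two vertices giving a visible facet.

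Combining the bijection between facets of $\mathcal{T}$ and sets of squiggly-or-double edges with this visibility count, \Cref{thm: visibility formula for h star} gives that $h^\ast_k(\C_{I_a})$ equals the number of such sets of total size $k$: there are $\binom{a}{k}$ of them with $k$ squiggly edges and no double, and $2a\binom{a-1}{k-1}$ with one double edge (one of $a$ positions, two orientations) and $k-1$ further squiggly edges among the remaining $a-1$ edges. Summing, exactly as in the computation of $v_i(z)$ at the end of the proof of \Cref{thm: h-star of cycle with multiedges},
\[
h^\ast(\C_{I_a};z)=\sum_{k}\left(\binom{a}{k}+2a\binom{a-1}{k-1}\right)z^k=(1+z)^a+2az(1+z)^{a-1},
\]
whence the theorem. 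I expect the main obstacle to be the explicit hyperplane bookkeeping needed for the visibility step; an alternative that sidesteps it is to argue by induction on $a$, writing $\C_{I_a}=\conv\bigl(P\cup\{-\e_1+\e_2+\e_{f_a}\}\bigr)$ with $P$ the bipyramid over $\C_{I_{a-1}}$ from the proof of \Cref{lem: lower bound} and then pinning down the $h^\ast$-contribution of attaching that single vertex, which however requires a comparable amount of work.
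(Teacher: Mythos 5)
Your proof is correct and follows essentially the same route as the paper's: reduce to a single multi-edge $I_m$ via \Cref{thm: 1-sum}, transplant the triangulation and half-open-decomposition machinery from \Cref{subsec: cycles} (the local facet description coming from \Cref{lem: induced G_S on one multi edge}, the bijection from facets to their squiggly-and-double-edge sets with no subtraction term needed, and the visibility count giving the analogue of \Cref{cor: visibility characterization}), and finish with the binomial identity $\sum_k\bigl(\binom{a}{k}+2a\binom{a-1}{k-1}\bigr)z^k=(1+z)^a+2az(1+z)^{a-1}$. The paper proceeds in exactly the same way, recording the facet description for $\Tm$ and \Cref{cor: sq db statistics trees} and then invoking the first half of the proof of \Cref{thm: h-star of cycle with multiedges}.
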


\begin{remark}
If $T$ is a multi-forest, i.e., the connected components of $T$ are multitrees of a certain types, then  \Cref{prop:union} implies that the $h^\ast$-polynomial of the cosmological polytope of $T$ is the product of the $h^\ast$-polynomials of the components and in particular, \Cref{thm:Ehrhart multitree} holds for multi-forests.
\end{remark}

As argued before \Cref{thm:Ehrhart multitree} will be a direct consequence of the following statement:

\begin{lemma}\label{lem: h*1-path}
Let $m$ be a positive integer.  The $h^\ast$-polynomial of $\mathcal{C}_{I_m}$ is 
    \begin{equation*}
        h^\ast(\mathcal{C}_{I_m};z) = (1+z)^m +2mz(1+z)^{m-1}.
    \end{equation*}
\end{lemma}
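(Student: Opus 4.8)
The plan is to run the programme used for multicycles in \Cref{thm: h-star of cycle with multiedges}, but in the degenerate case of a single multi-edge, where there are no paths of length $\geq 2$ and hence no zig-zag pairs. Write the two vertices of $I_m$ as $1,2$ and the parallel edges as $e_1,\dots,e_m$, abbreviate $\overrightarrow{y}_j=y_{12e_j}$, $\overleftarrow{y}_j=y_{21e_j}$, and set $V(e_j)=\{\overrightarrow{y}_j,\overleftarrow{y}_j,t_{e_j},z_{e_j}\}$. First I would fix the good term order given by the lexicographic order induced by
\[
\overrightarrow{y}_1>\cdots>\overrightarrow{y}_m>\overleftarrow{y}_m>\cdots>\overleftarrow{y}_1>z_{e_1}>\cdots>z_{e_m}>t_{e_1}>\cdots>t_{e_m}>z_1>z_2,
\]
and take the corresponding regular unimodular triangulation $\mathcal{T}_{I_m}$ from \Cref{cor: unimodular triangulation}. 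Since $I_m$ has only two vertices, the relevant Gröbner basis consists of the fundamental binomials together with the cyclic binomials of the $2$-cycles $\{e_i,e_j\}$. Reading off the leading terms (arguing exactly as in \Cref{lem: induced G_S on one multi edge}) and using that a maximal simplex has $|S|=|V|+|E|=m+2$ vertices, I expect the facets $S$ of $\mathcal{T}_{I_m}$ to be of three types: (C) $z_1,z_2\in S$ and $S\cap V(e_j)\in\{\{z_{e_j}\},\{t_{e_j}\}\}$ for all $j$; (A) a unique \emph{double edge} $e_i$ with $S\cap V(e_i)=\{\overrightarrow{y}_i,z_{e_i}\}$, with $z_1\in S$, $z_2\notin S$, $S\cap V(e_k)\in\{\{z_{e_k}\},\{t_{e_k}\}\}$ for $k<i$ and $S\cap V(e_k)\in\{\{\overrightarrow{y}_k\},\{t_{e_k}\}\}$ for $k>i$; or (B) the mirror of (A), with $\overrightarrow{y}$ replaced by $\overleftarrow{y}$, $z_1$ by $z_2$, and the roles of $k<i$ and $k>i$ interchanged. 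The key consequence is that $S$ is determined by which edge (if any) is the double edge, its orientation, and which of the remaining edges are \emph{squiggly} (carry $t_{e_k}$), and that any such choice with at most one oriented double edge extends to a facet.

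Next I would pick a point $q\in(0,1)^{V\cup E(I_m)}$ with all coordinates positive and $\sum_\alpha q_\alpha=1$, so that $q$ lies in the relative interior of $\mathcal{C}_{I_m}$, whose affine hull is $\{w:\sum_\alpha w_\alpha=1\}$. For a facet $S$ and a vertex $p\in S$, the facet of $\conv(S)$ opposite $p$ is visible from $q$ iff $p$ and $q$ lie on opposite sides of the unique hyperplane through $S\setminus\{p\}$; since $0\notin\mathrm{aff}(\mathcal{C}_{I_m})$, the $m+1$ points of $S\setminus\{p\}$ span an $(m+1)$-dimensional linear subspace, so there is a unique (up to scalar) $c\in\RR^{V\cup E}$ with $c^{\top}v=0$ for all $v\in S\setminus\{p\}$, and visibility amounts to $\mathrm{sign}(c^{\top}p)\neq\mathrm{sign}(c^{\top}q)$. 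Computing $c$ case by case ($c=\e_{e_j}$ when $p$ is a single edge on $e_j$, and otherwise an explicit $\{0,1,2\}$-vector supported on the vertices and the squiggly edges), I would check that the opposite facet is visible exactly when $p$ is a squiggly edge $t_{e_j}$ or the directed vertex ($\overrightarrow{y}_i$ or $\overleftarrow{y}_i$) of a double edge, and not visible when $p$ is $z_1$, $z_2$, a single directed or undirected edge, or the $z_{e_i}$-vertex of a double edge. Hence the number of facets of $\conv(S)$ visible from $q$ equals (number of squiggly edges of $S$)~$+$~(number of double edges of $S$); along the way $c^{\top}q>0$ always, which also shows $q$ is in general position.

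Finally, by \Cref{thm: visibility formula for h star}, $h^\ast_k(\mathcal{C}_{I_m})$ is the number of facets of $\mathcal{T}_{I_m}$ with exactly $k$ edges that are squiggly or double. Using the parametrisation from the first paragraph there are $\binom{m}{k}$ such facets with no double edge (choose $k$ squiggly edges among $m$) and $2m\binom{m-1}{k-1}$ with one double edge (choose the edge, one of two orientations, and $k-1$ further squiggly edges), so
\[
h^\ast(\mathcal{C}_{I_m};z)=\sum_{k\ge 0}\left(\binom{m}{k}+2m\binom{m-1}{k-1}\right)z^k=(1+z)^m+2mz(1+z)^{m-1}.
\]
I expect the facet classification and the final count to be routine, so the real work is the visibility analysis in the second paragraph: as in \Cref{lem: visibility of cycle with multi-edges} one must produce the correct normal vector in each of the (here, few) local configurations and check two signs. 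As sanity checks, $h^\ast(\mathcal{C}_{I_m};1)=(m+1)2^m$ agrees with the facet count from the classification, the case $m=1$ recovers $h^\ast(\mathcal{C}_{P_1};z)=1+3z$ from \Cref{ex: tree and forest graph}, and $\deg h^\ast(\mathcal{C}_{I_m};z)=m=|E|$ agrees with \Cref{thm: degree}.
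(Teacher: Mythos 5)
Your proposal follows essentially the same route as the paper: the same lexicographic good term order, the same three-type facet classification (your (C), (A), (B) are the paper's (iii), (i), (ii)), the same squiggly/double-edge visibility statistic via \Cref{thm: visibility formula for h star}, and the same final count $\binom{m}{k}+2m\binom{m-1}{k-1}$. The only cosmetic difference is that you sketch the visibility analysis directly for $I_m$, whereas the paper obtains it by restricting the multicycle analysis of \Cref{lem: visibility of cycle with multi-edges} to a single multi-edge.
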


To prove \Cref{lem: h*1-path} we will use the same methods as in \Cref{subsec: cycles} for multicycles. Given a multi-edge $I_m$ of multiplicity $m$, we denote the edges of this multi-edge by $e_1,\ldots,e_m$. We first need to define a good term order on the polynomial ring $R_m=R_{\C_{I_m}}$. 
To keep notation as simple as possible, given an edge $e_i$ ($1\leq i\leq m$) we write
\begin{itemize}
\item[$\bullet$] $\overrightarrow{y}_{i}$ and $\overleftarrow{y}_{i}$ for the variables $\overrightarrow{y}_{12e_i}$ and $\overleftarrow{y}_{12e_i}$, respectively,
\item[$\bullet$] $t_i$ for the variable $t_{12e_i}$, and as usual $z_{e_i}$ and $z_1$ and $z_2$.
\end{itemize}
We let $<$ be the lexicographic term order on $R_m$ induced by
\begin{align*}
   & \overrightarrow{y}_{1}>\overrightarrow{y}_{2}>\cdots >\overrightarrow{y}_{m} 
   >\overleftarrow{y}_{m}>\cdots > \overleftarrow{y}_{1}\\
   >&z_{e_1}>\cdots >z_{e_m}>t_{1}>\cdots >t_{m}>z_1>z_2.
\end{align*}
We denote by $\Tm$ the unimodular triangulation from \Cref{cor: unimodular triangulation} corresponding to the term order just defined. 
We observe that the term order, we used for multicycles, restricts to the above term order, when restricted to the polynomial ring on the variables corresponding to a particular multi-edge of cardinality $m$. In particular, \Cref{lem: induced G_S on one multi edge} can be applied. This means that given a facet $S$ of $\Tm$ the induced subgraph $G_S \coloneqq (I_m)_S$ has to be a multi-edge of type $A$, $B$ or $C$ with possibly the nodes $1$ and $2$ being white. Since $|S|=\dim\C_{I_m}+1=m+2$, in the first  two cases exactly one node has to be white (namely, node $1$ for type $A$ and node $2$ for type $B$) and in the third case both nodes have to be white. To avoid fundamental obstructions a multi-edge of type $C$ can only contain undirected and squiggly edges. This shows the following facet description for $\Tm$:

\begin{theorem}
	Let $S$ be a subset of the generators of $R_m$. Then $S$ is a facet of $\Tc_m$ if and only if $G_S$ is of one of the following three types:
	\begin{enumerate}
		\item[(i)] the node $1$ is white and the multi-edge is of type $A$.
		\item[(ii)] the node $2$ is white and the multi-edge is of type $B$.
		\item[(iii)] both nodes $1$ and $2$ are white and all edges of the multi-edge are single undirected or squiggly edges.
	\end{enumerate}
\end{theorem}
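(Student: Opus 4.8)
The plan is to obtain the statement as an essentially immediate consequence of \Cref{cor: unimodular triangulation} and \Cref{lem: induced G_S on one multi edge}. By \Cref{cor: unimodular triangulation}, a set $S$ of generators of $R_m$ corresponds to a facet of $\Tm$ if and only if $|S| = \dim\C_{I_m}+1 = m+2$ and $S$ does not contain the leading term of any binomial in the Gr\"obner basis $B_{I_m}$. Since the two vertices of $I_m$ are joined by no path of length $\geq 2$, there are no zig-zag pairs, and the only cyclic pairs come from the $2$-cycles formed by pairs of parallel edges; hence $B_{I_m}$ consists only of the fundamental binomials together with these $2$-cycle binomials, whose leading terms are exactly the fundamental obstructions of \Cref{fig: fundamental obstructions} and the cyclic obstructions of \Cref{fig:cyclic}. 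Because the term order $<$ on $R_m$ is precisely the restriction of the good term order used for multicycles in \Cref{subsec: cycles} to the variables attached to a single multi-edge, \Cref{lem: induced G_S on one multi edge} applies directly: if $S$ is a facet, then $G_S = (I_m)_S$ is a multi-edge of type $A$, $B$, or $C$, possibly with node $1$ and/or node $2$ white.

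First I would handle the forward direction using a dimension count. If $G_S$ is of type $A$, then exactly one edge $e_j$ is a double edge, contributing the two lattice points $\overrightarrow{y}_j$ and $z_{e_j}$, and each remaining edge contributes exactly one point, so the edges already account for $m+1$ of the $m+2$ elements of $S$; thus exactly one of $z_1, z_2$ belongs to $S$. As $\overrightarrow{y}_j = y_{12e_j} \in S$ and the fundamental binomial $y_{12e_j}z_2 - z_1 z_{e_j}$ has leading term $y_{12e_j}z_2$, we cannot have $z_2 \in S$, so node $1$ is the white node, giving case (i). Type $B$ is symmetric (using $y_{21e_j}z_1$), giving case (ii). If $G_S$ is of type $C$, all $m$ edges are single, so both $z_1$ and $z_2$ must lie in $S$; then no clockwise edge $\overrightarrow{y}_k$ can occur (obstruction $y_{12e_k}z_2$) and no counter-clockwise edge $\overleftarrow{y}_k$ can occur (obstruction $y_{21e_k}z_1$), so every edge is undirected or squiggly, which is case (iii).

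For the converse, I would verify directly that each configuration (i)--(iii) is a facet. In each case $|S| = m+2$: configurations (i) and (ii) have $m+1$ edge points and one white node, while (iii) has $m$ edge points and two white nodes. It then remains to check that $S$ contains no fundamental or cyclic obstruction, which is the same short finite verification carried out in the proof of \Cref{thm: cycle triangulation}: a double edge $\{\overrightarrow{y}_j, z_{e_j}\}$ (respectively $\{\overleftarrow{y}_j, z_{e_j}\}$) is not itself a fundamental obstruction; the single edges lie on the two sides of the double edge with exactly the orientations that prevent any pair of parallel edges from forming one of the four cyclic obstructions of \Cref{fig:cyclic}; every directed edge points at a black node; and no edge appears simultaneously as a squiggly edge and as its undirected or directed version. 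By \Cref{cor: unimodular triangulation}, $S$ is then a facet of $\Tm$.

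The statement is thus essentially a corollary of \Cref{lem: induced G_S on one multi edge} plus the dimension count, and I do not anticipate any genuine obstacle; the only point requiring care is the bookkeeping in the converse, namely confirming that the admissible types of the single edges flanking the double edge in cases (i) and (ii) are precisely those ruling out a cyclic obstruction. This mirrors the multicycle argument behind \Cref{thm: cycle triangulation} and introduces no new ideas.
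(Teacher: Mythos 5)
Your proof is correct and follows the same approach as the paper: apply \Cref{lem: induced G_S on one multi edge} via the observation that the term order on $R_m$ is the restriction of the multicycle term order, use the dimension count $|S|=m+2$ to force the number of white nodes in each type, and invoke the fundamental obstructions to pin down which node is white in types $A$ and $B$ and to exclude directed single edges in type $C$. You spell out the converse verification and the obstruction bookkeeping more explicitly than the paper's terse sketch, but no new ideas are involved.
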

The possible graphs  $G_S$ for a facet  $S$ of $\Tc_m$ are shown in the next picture, where we draw the edges $e_1,\ldots,e_m$ from top-to-bottom:
	\begin{center}
	\begin{tabular}{c|c|c}
		\begin{tikzpicture}
			\node[black, inner sep=3pt] (1) at (1.5, 0) {};
            \node[] (a) at (0.75,0.75) {,};
            \node[] (b) at (0.75,-0.75) {,};
            \node[] (2) at (-0.6,0.75) {};
            \node[] (3) at (0.65,0.75) {};
            \node[] (4) at (0.85,0.75) {};
            \node[] (5) at (2.1,0.75) {};
            \node[] (6) at (-0.6,-0.75) {};
            \node[] (7) at (0.65,-0.75) {};
            \node[] (8) at (0.85,-0.75) {};
            \node[] (9) at (2.1,-0.75) {};

            \draw[dotted] (0.1,-0.1) rectangle (1.4,0.3);
            \node[white,inner sep=3pt] (0) at (0, 0) {};
            
            \draw[squiggle] (2) to (3);
            \draw (4) to (5);
            \draw[->] (0) to [bend left] (1);
            \draw (0) to (1);
            \draw[squiggle] (6) to (7);
            \draw[->] (8) to (9);
            
            \node[] (it) at (-0.5,0) {$1$};
            \node[] (it+1) at (2,0) {$2$};
            \node (i) at (0.75,-1.5) {$(i)$};
		\end{tikzpicture}
		&
		\begin{tikzpicture}
			\node[black, inner sep=3pt] (1) at (1.5, 0) {};
            \node[] (a) at (0.75,0.75) {,};
            \node[] (b) at (0.75,-0.75) {,};
            \node[] (2) at (-0.6,0.75) {};
            \node[] (3) at (0.65,0.75) {};
            \node[] (4) at (0.85,0.75) {};
            \node[] (5) at (2.1,0.75) {};
            \node[] (6) at (-0.6,-0.75) {};
            \node[] (7) at (0.65,-0.75) {};
            \node[] (8) at (0.85,-0.75) {};
            \node[] (9) at (2.1,-0.75) {};

            \draw[dotted] (0.1,-0.1) rectangle (1.4,0.3);
            \node[white,inner sep=3pt] (0) at (0, 0) {};
            
            \draw[squiggle] (2) to (3);
            \draw[<-] (4) to (5);
            \draw[<-] (0) to [bend left] (1);
            \draw (0) to (1);
            \draw[squiggle] (6) to (7);
            \draw (8) to (9);
            
            \node[] (it) at (-0.5,0) {$1$};
            \node[] (it+1) at (2,0) {$2$};
            \node (i) at (0.75,-1.5) {$(ii)$};
		\end{tikzpicture} &
		\begin{tikzpicture}
			\node[white,inner sep=3pt] (10) at (-2, 0) {};
            \node[white,inner sep=3pt] (11) at (-4, 0) {};
            
            \draw (10) to ($(10)+(-0.9,0)$);
            \draw[squiggle] ($(10) + (-1.1,0)$) to (11);
            \node[] (15) at (-3,0) {,};

            \draw[white, draw=white] (-3,0.5) circle (2pt);

            \node[] (it) at (-4.5,0) {$1$};
            \node[] (it+1) at (-1.5,0) {$2$};
            \node (i) at (-3,-1.5) {$(iii)$};
		\end{tikzpicture}
	\end{tabular}
	\end{center}

Mimicking what we did for multicycles, we define $\widetilde{q}\in \RR^{[2]\cup E(I_m)}$ by
\[
\widetilde{q}_1=\widetilde{q}_2\coloneqq \frac{1}{2} \cdot \frac{2+\frac{1}{2}}{1+2}
\]
and
\[
\widetilde{q}_{e_i}\coloneqq \frac{1}{m}\cdot \frac{\frac{1}{2}}{1+2} \quad \mbox{ for } 1\leq i\leq m.
\]
We get the following analogue of \Cref{cor: visibility characterization}.
\begin{corollary}
\label{cor: sq db statistics trees}
    Let $S$ be a facet of $\Tm$ and let $q \in \RR^{[2]\cup E(I_m)}$ as defined above. Then the number of facets of $S$ that are visible from $q$ is equal to the number of edges in $G_S$ that are squiggly edges or double edges. 
\end{corollary}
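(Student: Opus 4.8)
The plan is to follow the proof of \Cref{lem: visibility of cycle with multi-edges}, which becomes considerably shorter here: by the facet classification of $\Tm$ recorded above, the graph $G_S$ of a facet $S$ is a single multi-edge of type $A$, $B$ or $C$ (with the nodes $1$ and $2$ possibly white), so only a handful of local pictures arise. Since $|S|=\dim\C_{I_m}+1=m+2$, the set $S$ is the vertex set of a maximal simplex of $\Tm$, and the facets of the simplex $\conv(S)$ are in bijection with its vertices, i.e.\ with the elements of $S$. For each $p\in S$ I will exhibit the homogeneous linear functional $c$ with $c\cdot u=0$ for all $u\in S\setminus\{p\}$; since $c\cdot p\neq 0$ this cuts $\mathrm{aff}(\C_{I_m})$ exactly in the affine hull of the facet opposite $p$, so visibility is decided by comparing the signs of $c\cdot p$ and $c\cdot\widetilde q$ (opposite signs: that facet is visible from $\widetilde q$; equal signs: it is not). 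As in \Cref{lem: visibility of cycle with multi-edges}, the cases are organized by the type of $p$.

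\textbf{Single edges.} If $p$ corresponds to a single edge $e_i$ of $I_m$, then $e_i$ occurs only once in $G_S$, so every $u\in S\setminus\{p\}$ has vanishing $e_i$-coordinate and we may take $c=\e_{e_i}$. As $\widetilde q_{e_i}>0$ while the $e_i$-coordinate of $p$ equals $-1$ when $p=\e_1+\e_2-\e_{e_i}$ (squiggly) and $+1$ otherwise, the facet opposite $p$ is visible exactly when $p$ is a squiggly edge.

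\textbf{Halves of a double edge and white nodes.} Suppose $p$ is the directed half $\overrightarrow{y}_j$ (respectively $\overleftarrow{y}_j$) of a double edge $e_j$; then $S$ is of type (i) (respectively, by exchanging the roles of $1$ and $2$, of type (ii)). Solving $c\cdot u=0$ over $u\in S\setminus\{p\}$ forces $c_{z_1}=c_{e_j}=0$, leaves $c_{z_2}$ free, and pins $c_{e_k}\in\{0,c_{z_2}\}$ for every remaining single edge $e_k$; normalizing $c_{z_2}=1$ gives a nonnegative $c$ with $c\cdot p=-1<0$ and $c\cdot\widetilde q>0$, so that facet is visible. If instead $p$ is the undirected half $z_{e_j}$ of a double edge, the same system yields a nonnegative $c$ with $c_{e_j}=1$, whence $c\cdot p=1>0$ and $c\cdot\widetilde q>0$, so the opposite facet is not visible. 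Finally, if $p$ is a white node $z_1$ or $z_2$ (in a facet of type (i), (ii) or (iii)), solving the system once more produces a nonnegative $c$ with $c_{z_1}>0$ (respectively $c_{z_2}>0$), so $c\cdot p>0$ and $c\cdot\widetilde q>0$ and the opposite facet is not visible. Along the way these computations show that $\widetilde q$ lies on none of the hyperplanes $\{c\cdot w=0\}$, i.e.\ that $\widetilde q$ is in general position with respect to $\Tm$.

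Assembling the cases: in a facet of type (i) or (ii), the visible facets of $\conv(S)$ are precisely those opposite the single squiggly edges together with the one opposite the directed half of the unique double edge, so their number equals the number of squiggly edges plus $1$, i.e.\ the number of squiggly edges plus the number of double edges; in a facet of type (iii) there is no double edge and the visible facets are exactly those opposite the squiggly edges. In every case the count is the number of edges of $G_S$ that are squiggly or double, which is the assertion. The one point requiring genuine care — and the reason the cyclic computation cannot simply be quoted — is that the undirected half of a double edge is here \emph{never} visible, whereas in \Cref{lem: visibility of cycle with multi-edges} it was visible precisely when sandwiched between a white node and a type-$C$ multi-edge, a configuration that cannot occur inside a single multi-edge; the remaining work is routine linear algebra and bookkeeping.
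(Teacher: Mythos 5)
Your proof is correct and takes essentially the same approach as the paper, which omits the details by asserting the argument is analogous to \Cref{lem: visibility of cycle with multi-edges} with the hyperplanes restricted to the single multi-edge; you have simply carried out the case-by-case linear algebra explicitly. Your closing caveat about the undirected half of a double edge is a useful sanity check, though the precise condition for its visibility in the cycle case is that the double edge lies in a multi-edge of type $B$ strictly between $i_t$ and the type-$C$ multi-edge (a type-$A$ multi-edge in that position has its undirected half \emph{not} visible), so it is not quite the sandwiching between a white node and a type-$C$ multi-edge that you describe.
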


We omit the details of the proof as it is exactly analogous to the proof of \Cref{cor: visibility characterization} when we take the hyperplanes defined therein and restrict them to the considered multi-edge. 
\Cref{thm:Ehrhart multitree} now follows from the first part of the proof of \Cref{thm: h-star of cycle with multiedges}. 

\subsection{Other graphs}
\label{subsec: other graphs}
The results in \Cref{subsec: cycles} and \Cref{subsec: tree polynomial} raise a natural question: Can the same approach be used to compute the $h^\ast$-polynomial of cosmological polytopes of other classes of graphs.
A reasonable starting point is to extend the method to families of $2$-connected graphs beyond the multicycle. 
Based on computational evidence, we propose the following conjecture for one class of $2$-connected graphs.

\begin{conjecture}
\label{conj: 2-connected}
    Let $P_k$, $P_\ell$, and $P_m$ be paths of length $k,\ell,m$, respectively, and let $G$ be the graph obtained by identifying the three starting nodes and the three ending nodes, then we have
    \begin{align*}
        h^\ast(\C_G;z)=(1+3z)^{k+\ell+m}&-(2z)^{k+\ell}(1+3z)^m-(2z)^{k+m}(1+3z)^\ell -(2z)^{\ell+m}(1+3z)^k\\&-(2z)^{k+\ell+m}+3 \cdot (2z)^{k+\ell+m}.
    \end{align*}
\end{conjecture}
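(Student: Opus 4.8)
The plan is to mirror the strategy used for multicycles in \Cref{subsec: cycles}. Write $G$ for the theta graph obtained from $P_k,P_\ell,P_m$ by identifying the three initial vertices to a single branch vertex $u$ and the three terminal vertices to a single branch vertex $v$; all other vertices have degree $2$, and one computes $|V(G)|=k+\ell+m-1$ and $|E(G)|=k+\ell+m$, so that $\dim\C_G=2(k+\ell+m)-2$ and, by \Cref{thm: degree}, $\deg h^\ast(\C_G;z)=k+\ell+m$. As in the multicycle case, the argument proceeds in four steps: (i) fix a good term order on $R_G$; (ii) extract from \Cref{cor: unimodular triangulation} the regular unimodular triangulation $\mathcal{T}_G$ and describe its maximal simplices combinatorially; (iii) exhibit a point $q$ in the interior of $\C_G$ in general position with respect to all maximal simplices and determine, for each simplex, its number of visible facets; and (iv) apply \Cref{thm: visibility formula for h star} and count. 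The engine of the computation is again the pair of facts that a maximal simplex $S$ of $\mathcal{T}_G$ is uniquely determined by its set $S'$ of squiggly and double edges (analogue of \Cref{lem: edge arrangements}) and that the number of facets of $S$ visible from $q$ equals $|S'|$ (analogue of \Cref{cor: visibility characterization}).

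For (i)--(ii), orient each of the three paths from $u$ towards $v$, call these orientations \emph{forward}, and take a lexicographic term order which restricts, on the variables of any one path, to the term order of \Cref{subsec: cycles} (forward $y$-variables largest, then backward $y$-variables, then $z$-variables of edges, then $t$-variables, then $z$-variables of vertices) and which is good in the sense of \Cref{def: good term order}; in particular the cyclic binomials of the three cycles $C_{k\ell}=P_k\cup P_\ell$, $C_{km}=P_k\cup P_m$, $C_{\ell m}=P_\ell\cup P_m$ have their all-$y$ monomial as leading term. Restricting to a single edge, \Cref{lem: induced G_S on one multi edge} shows that in the decorated subgraph $G_S$ of any maximal simplex $S$ each edge is plain, squiggly, directed, or a \emph{double} edge, and a count of lattice points gives that the number of black nodes of $G_S$ equals its number of double edges. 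The forbidden local configurations are then: the six fundamental obstructions on each edge; the partially-directed-path (zig-zag) obstructions along any path of $G$, including paths that turn through $u$ or $v$; and, for each of the three cycles, the corresponding cyclic obstructions. Arguing as in the proof of \Cref{thm: cycle triangulation}, one shows that $G_S$ is reconstructed from the positions of its squiggly and double edges — the double edges fix whether each multi-edge is of type $A$ or $B$, the squiggly edges fix the remaining single edges, and the type-$C$ multi-edges together with the white nodes are then forced as in the proof of \Cref{lem: edge arrangements} by traversing the graph from each type-$C$ multi-edge until the next type-$A$ or type-$C$ multi-edge — and that a choice of squiggly edges and double edges with a choice of orientation extends to a (necessarily unique) maximal simplex of $\mathcal{T}_G$ if and only if each of the three cycles contains at least one non-double edge, equivalently \textbf{at most one of the three paths $P_k,P_\ell,P_m$ consists entirely of double edges}.

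For (iii), set $q_i\coloneqq\frac{1}{|V(G)|}\cdot\frac{|V(G)|+\frac12}{1+|V(G)|}$ for each vertex $i$ and $q_e\coloneqq\frac{1}{|E(G)|}\cdot\frac{\frac12}{1+|V(G)|}$ for each edge $e$, so that the coordinates of $q$ are positive and sum to $1$, hence $q\in\mathrm{int}(\C_G)$. Following the five cases of the proof of \Cref{lem: visibility of cycle with multi-edges} ($p$ a single directed, squiggly or plain edge; $p$ a forward double edge; $p$ a backward double edge; $p$ an undirected double edge; $p$ a white node), one exhibits for each vertex $p$ of each maximal simplex $S$ the unique homogeneous hyperplane $\mathcal{H}$ containing the vertices of $S$ other than $p$, with an explicit normal vector taking the same small integer values as in \Cref{subsec: cycles} (the new local configurations at $u$ and $v$ being treated by the same rules), verifies that $q$ lies off all of these hyperplanes, and concludes that the facet of $S$ opposite $p$ is visible from $q$ precisely when $p$ is a squiggly edge or a double edge. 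Hence the number of visible facets of $S$ equals $|S'|$, and \Cref{thm: visibility formula for h star} gives
\[
h^\ast(\C_G;z)=\sum_{S'}z^{|S'|},
\]
the sum over all \emph{admissible} $S'$, i.e.\ assignments to each edge of one of the states plain ($z^0$), squiggly ($z^1$), forward double ($z^1$) or backward double ($z^1$) such that at most one path is entirely double. Dropping the constraint, each edge contributes the factor $1+3z$, giving $(1+3z)^{k+\ell+m}$; inclusion--exclusion over the three ``bad'' events $\{P_i\text{ and }P_j\text{ both entirely double}\}$, whose generating functions are $(2z)^{i+j}(1+3z)^{p}$ with the exponent $p$ equal to the length of the third path, and whose three pairwise intersections and triple intersection all have generating function $(2z)^{k+\ell+m}$, yields exactly the formula in the statement (the last two terms $-(2z)^{k+\ell+m}+3(2z)^{k+\ell+m}$ being $-|B_i\cap B_j\cap B_k|$ plus the sum of the three pairwise intersections).

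The main obstacle is step (ii): arranging the term order so that the notions of ``clockwise'' path and cycle from the multicycle analysis have a coherent analogue in the theta graph, and carrying the obstruction analysis and the reconstruction argument through the two branch vertices $u,v$, where a directed path may turn from one constituent path onto another and where the arcs between consecutive white nodes need no longer lie inside a single cycle. Step (iii) is then routine but lengthy, since a handful of new local pictures appear around $u$ and $v$, each demanding its own certifying hyperplane.
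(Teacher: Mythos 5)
The statement you are trying to prove is not a theorem in the paper at all: it is \Cref{conj: 2-connected}, which the authors explicitly flag as a \emph{conjecture} ``based on computational evidence.'' There is therefore no paper proof to compare against, and your job was in effect to supply a proof the authors themselves did not have. Measured by that standard, what you have written is a plausible and well-organized attack plan, but it is not a proof: every step that carries mathematical content is asserted by analogy with the multicycle case rather than verified, and the places where the analogy is most strained are precisely the places you gesture at in your closing paragraph.

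More concretely. Your step (iv) --- the inclusion--exclusion over the three ``bad'' events, with $(1+3z)$ per free edge and $(2z)$ per forced double edge --- does reproduce the conjectured formula \emph{provided} the admissibility criterion is exactly ``at most one of the three paths is entirely double,'' \emph{and} provided the facets are in bijection with admissible $S'$, \emph{and} provided the number of visible facets equals $|S'|$. None of these is established. The facet characterization (the analogue of \Cref{thm: cycle triangulation}), the reconstruction lemma (the analogue of \Cref{lem: edge arrangements}), and the visibility computation (the analogue of \Cref{lem: visibility of cycle with multi-edges}) all rely in the multicycle setting on a global cyclic orientation that lets one speak of ``clockwise'' traversal and ``partially directed paths/cycles in clockwise direction.'' In the theta graph the two branch vertices $u,v$ have degree $3$, so zig-zag binomials arise from paths that turn from one branch onto another through $u$ or $v$, and cyclic binomials come from three overlapping cycles rather than one; the leading-term analysis of these binomials under your proposed term order, and the verification that the only surviving constraint on the pair (double edges, squiggly edges) is the three ``cycle has a non-double edge'' conditions, is exactly the work that is missing. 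It is far from clear a priori that the zig-zag obstructions through $u,v$ impose no \emph{additional} constraints on $S'$ once the reconstruction is carried out, nor that the reconstruction of white nodes and type-$C$ positions remains well-defined and unique when the arc between two white nodes can branch. Likewise, your step (iii) defers the hyperplane certificates for every new local picture at $u,v$; until those are exhibited and checked against $q$, the identity ``visible facets of $S$ $=|S'|$'' is the content of the unproved \Cref{conj: h*-formula} specialized to theta graphs, not a lemma you may cite.

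So the gap is genuine: the proposal has the right architecture and the correct endgame arithmetic, but the two lemmas that would make it a proof --- the facet/admissibility characterization and the visibility count --- are the hard part, they do not transfer automatically from the multicycle case because of the branch vertices, and they are left unproved. To turn this into a theorem you would need, at a minimum, (a) an explicit good term order on $R_G$ and a direct description of the leading terms of the zig-zag and cyclic binomials of the theta graph, (b) a proof that $G_S$ is uniquely recoverable from $S'$ and a proof of the claimed admissibility criterion, and (c) an explicit table of certifying hyperplanes covering the new local configurations at $u$ and $v$. Your own last paragraph correctly diagnoses exactly this.
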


Note that the family of graphs considered in \Cref{conj: 2-connected} contains the complete bipartite graph $K_{2,3}$. 
The normalized volume of $\C_{K_{2,m}}$ for $m\geq 1$ was computed by Landin in \cite{landin2023cosmological}. 
Indeed, evaluating the above polynomial at $1$ for $G = K_{2,3}$ recovers the normalized volume formula in \cite{landin2023cosmological}. 

Thinking more generally, \Cref{cor: visibility characterization} and \Cref{cor: sq db statistics trees} support the following general formula for the $h^\ast$-polynomial of a cosmological polytope.
\begin{conjecture}
    \label{conj: h*-formula}
    Let $G$ be a graph and $T$ be a triangulation of $\C_G$ given by a good term order on $R_G$. Then
    \[
    h^\ast(\C_G; z) = \sum_{S\in T}z^{\textrm{sq}(G_S) + \textrm{db}(G_S)},
    \]
    where $\textrm{sq}(G_S)$ and $\textrm{db}(G_S)$ respectively denote the number of squiggly and double edges in $G_S$.
\end{conjecture}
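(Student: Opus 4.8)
The plan is to prove the conjecture by the same mechanism used for multicycles and multitrees: deduce it from the visibility formula of \Cref{thm: visibility formula for h star} by exhibiting a suitable evaluation point. By that formula it suffices to produce a point $q$ in the interior of $\C_G$ that is in general position with respect to all maximal simplices of the triangulation $T$ and such that, for every maximal simplex $\conv(S)\in T$, the number of facets of $\conv(S)$ visible from $q$ equals $\mathrm{sq}(G_S)+\mathrm{db}(G_S)$. The candidate for $q$ is the obvious generalization of the points appearing in \Cref{lem: visibility of cycle with multi-edges} and just before \Cref{cor: sq db statistics trees}: let every vertex coordinate of $q$ be equal, every edge coordinate of $q$ be equal but far smaller, and normalize the total vertex mass to a fixed value such as $(|V|+\tfrac12)/(|V|+1)$ so that $\sum_{\alpha}q_\alpha=1$, i.e.\ $q\in\mathrm{aff}(\C_G)$, with all coordinates strictly positive.

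For a maximal simplex $\conv(S)$ and a vertex $p\in S$, the opposite facet $F=\conv(S\setminus\{p\})$ spans a hyperplane; since $\conv(S)$ is unimodular and $0\notin\mathrm{aff}(\C_G)=\{w:\sum_\alpha w_\alpha=1\}$, the points of $S\setminus\{p\}$ are linearly independent and there is a normal vector $c_p$, unique up to scaling, with $c_p^\top v=0$ for all $v\in S\setminus\{p\}$; then $F$ is visible from $q$ exactly when $c_p^\top p$ and $c_p^\top q$ have opposite signs. I would split the analysis according to the type of $p$. If $p$ is the only point of $S$ lying on its edge $e$, then every other point of $S$ has vanishing $e$-coordinate, so $c_p$ is (a scalar multiple of) the $e$-th coordinate functional; since $q_e>0$ and the $e$-coordinate of $p$ equals $-1$ precisely when $p$ is a squiggly edge and $+1$ otherwise, $F$ is visible iff $p$ is squiggly. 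If $p$ lies on a double edge --- the only two-point configuration on a single edge permitted by the underlined fundamental obstructions of a good term order is $\{z_e,y_{ije}\}$ for one orientation --- then $c_p$ depends on the local structure of $G_S$ around $e$, and the heart of the argument is to show that of the two facets obtained by deleting $z_e$ and by deleting $y_{ije}$ respectively, exactly one is visible. Finally one shows the opposite facet is not visible at any white node and at any single non-squiggly edge point. Summing these contributions over $p\in S$ gives the exponent $\mathrm{sq}(G_S)+\mathrm{db}(G_S)$, and the general position of $q$ falls out of the sign computations along the way.

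The main obstacle is that, unlike for multicycles and multitrees, there is no tidy combinatorial classification of the maximal simplices of $T$ for a general graph and a general good term order, so the double-edge case cannot be handled by reducing to a finite list of local pictures as in \Cref{lem: visibility of cycle with multi-edges}. Two approaches to get around this seem reasonable. The first is to perform the sign computation intrinsically from the three families of obstructions (fundamental, zig-zag and cyclic): one would argue that, whatever $G_S$ looks like near a double edge $e$, the maximal undirected-plus-directed path or cycle of $G_S$ through $e$ constrains $c_p$ to one of boundedly many shapes, on each of which the sign of $c_p^\top q$ can be read off from the vertex/edge mass split of $q$. The second is to first prove that the polynomial $\sum_{S\in T}z^{\mathrm{sq}(G_S)+\mathrm{db}(G_S)}$ is independent of the chosen good term order --- for instance by tracking how $\mathrm{sq}+\mathrm{db}$ changes under the bistellar flips connecting two regular triangulations --- and then to verify the identity only for one conveniently chosen lexicographic order extending those of \Cref{subsec: cycles} and \Cref{subsec: tree polynomial}, where the analysis decouples edge by edge. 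Establishing either the intrinsic local-structure statement or the flip-invariance of $\mathrm{sq}+\mathrm{db}$ is where the real difficulty lies.
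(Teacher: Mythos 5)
The statement you are trying to prove is posed in the paper as an \emph{open conjecture}; the authors prove it only in the special cases of multicycles (\Cref{thm: h-star of cycle with multiedges}, via \Cref{cor: visibility characterization}) and multitrees (\Cref{thm:Ehrhart multitree}, via \Cref{cor: sq db statistics trees}), and the general case is explicitly left unresolved. So there is no paper proof to compare against, and what you need is a complete argument for arbitrary $G$ and arbitrary good term order.

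Your setup is correct and exactly mirrors the paper's machinery: using \Cref{thm: visibility formula for h star} with a barycentric-type point $q$ (equal vertex mass, much smaller equal edge mass, normalized into $\mathrm{aff}(\C_G)$); the description of visibility via the sign of $c_p^\top p$ versus $c_p^\top q$; and the disposal of the single-edge case, where the normal is the $e$-th coordinate functional so the opposite facet is visible precisely when $p$ is the squiggly point ($p_e=-1$). Your observation that good term orders force the only possible two-point set on a single edge to be $\{z_e, y\}$ with $y$ one of the two orientations is also consistent with \Cref{lem: induced G_S on one multi edge}. However, you stop precisely at the heart of the matter: determining, for an arbitrary maximal simplex $S$ of an arbitrary good triangulation, the sign of $c_p^\top q$ when $p$ lies on a double edge or when $p$ is a white node, and simultaneously showing $q$ never lands on one of these hyperplanes. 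In the paper this step is carried out only by a finite enumeration of local pictures of $G_S$ available from \Cref{thm: cycle triangulation}, a classification that simply does not exist for a general graph; you name this obstacle yourself and propose two possible workarounds (an intrinsic local-structure argument from the obstruction families, or flip-invariance of $\mathrm{sq}+\mathrm{db}$), but you carry out neither. As written this is therefore a plausible research plan rather than a proof: the conjecture remains open, and the real mathematical content --- controlling the normal vectors $c_p$ near double edges and white nodes without an a priori combinatorial model of $G_S$, or proving that $\mathrm{sq}+\mathrm{db}$ is a flip invariant --- is exactly what is missing. Note also that even your second proposed route would need an additional argument that all triangulations arising from good term orders can be reached by flips while staying within a class on which the invariance holds, or else a separate argument for each such triangulation.
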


If \Cref{conj: h*-formula} fails to hold for an arbitrary good term order, it may be the case that, for any graph $G$, there exists (at least) one good term order yielding the combinatorial formula for $h^\ast(\C_G;z)$ specified in \Cref{conj: h*-formula}.

\iffalse
\subsection{Additional examples}
\label{subsec: additional}

\section{Final Remarks}
\label{sec: final remarks}

\textcolor{red}{
The following observation should be factored into the discussion here and used as a motivating point for searching for combinatorial interpretations of the $h^\ast$-coefficients.
\begin{proposition}
    For any graph $G$ with $\ell$ loops, we have
    \begin{align*}
        h^\ast_1(\C_G)=3 \vert E(G) \vert-2 \ell.
    \end{align*}
\end{proposition}
%
\begin{proof}
    Suppose first that $G$ does not contain any loops. Then
    \begin{align*}
        h^\ast_1(\C_G)&=\vert\C_G \cap \mathbb{Z}^{V(G) \cup E(G)}\vert-\dim(\C_G)-1 \\
        &=4 \vert E(G) \vert + \vert V(G) \vert -(\vert V(G) \vert +\vert E(G) \vert-1)-1 \\
        &=3 \vert E \vert.
    \end{align*}
    Let $\{f_1,\ldots,f_\ell\}$ be the loops of $G$. It is easy to see that the $h^\ast$-polynomial of a graph that contains only a single vertex and a loop at this vertex is $1+z$. By \Cref{thm: 1-sum}, the $h^\ast$-polynomial $h^\ast(G;z)=h^\ast(G-\{f_1,\ldots,f_\ell\},z)\cdot (1+z)^\ell$. This finishes the proof.
\end{proof}
}
\fi
\subsection*{Acknowledgements}
Liam Solus was supported by the Wallenberg Autonomous Systems and Software Program (WASP) funded by the Knut and Alice Wallenberg Foundation, a Starting Grant from the Swedish Research Council (Vetenskapsr\aa{}det), the G\"oran Gustafsson Foundation Prize for Young Researchers, and a research pairs grant from the Center for Digital Futures at KTH.

\bibliographystyle{abbrvnat}
\bibliography{ehr_cosmo_bib}

% \clearpage
% \appendix

% \section{Appendix Section 1}
% \label{appsec: 1}

\end{document}